\numberwithin{equation}{section} 
\newtheorem{guess}{Theorem}[section]
\newtheorem{rem}[guess]{Remark}
\newtheorem{defi}[guess]{Definition}
\newtheorem{thm}[guess]{Theorem}
\newtheorem{lem}[guess]{Lemma}
\newtheorem{prop}[guess]{Proposition}
\newtheorem{Cor}[guess]{Corollary}
\newcommand{\cO}{\mathcal{O}}
\newcommand{\cC}{\mathcal{C}}
\newcommand{\cE}{\mathcal{E}}
\newcommand{\cR}{\mathcal{R}}
\newcommand{\cF}{\mathcal{F}}
\newcommand{\cH}{\mathcal{H}}
\newcommand{\cM}{\mathcal{M}}
\newcommand{\cG}{\mathcal{G}}
\newcommand{\cT}{\mathcal{T}}
\newcommand{\Quot}{\mathrm{Quot}}
\newcommand{\Sym}{\mathrm{Sym}}
\newcommand{\disc}{\mathrm{disc}}
\newcommand{\Aut}{\mathrm{Aut}}
\newcommand{\ram}{\mathrm{ram}}
\newcommand{\Img}{\mathrm{Img}}
\newcommand{\Spec}{\mathrm{Spec}}
\newcommand{\zero}{\mathrm{zero}}
\newcommand{\Ker}{\mathrm{Ker}}
\newcommand{\ev}{\mathrm{ev}}
\newcommand{\Tor}{\mathrm{Tor}}
\newcommand{\rank}{\mathrm{rank}}
\newcommand{\lra}{\longrightarrow}
\newcommand{\hra}{\hookrightarrow}
\newcommand{\ra}{\rightarrow}
\newcommand{\ol}{\overline}
\newcommand{\ms}{\mapsto}
\newcommand{\PP}{\mathbb{P}}
\newcommand{\ZZ}{\mathbb{Z}}
\newcommand{\GG}{\mathbb{G}}
\newcommand{\QQ}{\mathbb{Q}}
\newcommand{\HH}{\mathbb{H}}
\newcommand{\CC}{\mathbb{C}}
\newcommand{\Ext}{\mathrm{Ext}}
\newcommand{\Hom}{\mathrm{Hom}}
\newcommand{\Id}{\mathrm{Id}}
\newcommand{\GL}{\mathrm{GL}}
\newcommand{\SL}{\mathrm{SL}}
\newcommand{\SO}{\mathrm{SO}}
\newcommand{\Proj}{\mathrm{Proj}}
\newcommand{\Shalf}{S_{1/2}}
\begin{document}

\markboth{Yashonidhi Pandey}{}


\title{A properness result for degenerate Quadratic and Symplectic Bundles on a smooth projective curve}
\author{Yashonidhi Pandey}
\address{ 
Indian Institute of Science Education and Research, Mohali Knowledge city, Sector 81, SAS Nagar, Manauli PO 140306, India, ypandey@iisermohali.ac.in, yashonidhipandey@yahoo.co.uk}

\begin{abstract} Let $(V,q)$ be a vector bundle on a compact Riemann surface $X$ together with a quadratic form $q: \Sym^2(V) \ra \cO_X$ (respectively symplectic form $q: \Lambda^2V \ra \cO_X$). Fixing the degeneracy locus of the quadratic form induced on $V/\ker(q)$, we construct a coarse moduli of such objects. Further, we prove semi-stable reduction theorem for equivalence classes of such objects. In particular, the case when degeneracies of $q$ are higher than one is that of principal interest. We also provide a proof of properness of polystable orthogonal bundles without appealing to Bruhat-Tits theory in any characteristic.
\end{abstract}

\keywords{Degenerate quadratic bundles, Framed modules, Level Structures, Semi-stable reduction}

\subjclass[2000]{ 14F22,14D23,14D20}

\maketitle
\tableofcontents

\input{amssym.def}

\section{Introduction}

\subsection{Motivation} 
We would like to construct a coarse moduli space for pairs $(V,q)$ where $V$ is a vector bundle of rank $n$ on a smooth projective curve $X$ and $q: \Sym^2V \ra \cO_X$ is a quadratic form on $V$ that is {\it only generically non-degenerate} as a {\it projective variety}. If $\deg(V)=0$  then $q$ must be a everywhere non-degenerate. This case is well understood. The case of $\deg(V) <0$ becomes that carrying interest. Note that if $q: \Sym^2 V \ra \cO_X$ factors through $\cO_X(-x) \hra \cO_X$ for some $x \in X$, then by going to a cover, we may take square root of $\cO_X(-x)$.  We have assumed in this article therefore that {\it no such factoring takes place}. 

Let $F$ be the functor that to a scheme $T$ associates equivalence classes of $T$-families of quadratic bundles $(V_T,q_T)$ on the curve $X$. The functor $F^{nd}$ associating generically non-degenerate quadratic forms is an open sub-functor of $F$. Let us fix the line bundle $\det(V^*)^2$, say $L$ and consider the associated functor $F^{nd}_L$. Then one has the natural map
\begin{equation} disc: F^{nd}_L \ra \PP(\Gamma(X,L))
\end{equation}
given by associating to a generically non-degenerate quadratic bundle $(V,q)$  its {\it discriminant section} $\det(q) \in \Gamma(X, \det(V^*)^2$.
Our interest has been in fixing the fiber of the map $disc$, which consists of fixing the degeneracy locus of $q$ and also the length of the sky-scraper sheaf $S=V^*/q(V)$ at each point in its support. The case when the depth of $S$ is more than one is our main interest (because depth one case can be studied equivalently as $\ZZ/2$-orthogonal bundles on two-sheeted covers). This does not seem to have been studied before. 

Let us fix $s \in \PP(\Gamma(X,\det(V^*)^2))$. On the fiber $disc^{-1}(s)$, the sky-scraper sheaf $V^*/q(V)$ defines a discrete invariant (Prop \ref{disceteinvariant}). The associated functor does not seem to be proper. On the limit, we obtain everywhere degenerate quadratic forms. To explain the limiting objects, let us consider quadratic bundles $(V,q)$ such that $\ker(q)$ has degree zero (it may possibly also have rank zero). We can extend the notion of {\it degeneracy locus} to such bundles by considering the degeneracy locus of the quadratic bundle $(V/\ker(q),\ol{q})$ where $\ol{q}$ denote the associated quadratic form. Putting $\ol{V}=V/\ker(q)$, we may also extend the notion of degeneracy type of $\ol{q}$ by considering the sky-scraper sheaf $\ol{V^*}/\ol{V}$. It is equivalently the torsion part of $V^*/V$.

In this paper, fixing the degeneracy locus and type of quadratic bundles in this extended sense, we have attempted to construct a coarse moduli space as a {\it projective scheme}. We show that the coarse moduli of pairs $(V,q)$, where $V$ is polystable of negative degree, {\it $q$ is generically non-degenerate} and $V^*/V$ is some fixed sky-scraper sheaf $S$, is compactified with everywhere degenerate quadratic bundles whose degeneracy locus is contained in $\zero(\det(q))$ and whose degeneracy type $\ol{V^*}/\ol{V}$ is a quotient of $S$ (cf Remark \ref{limitingobjects}). This implies that at prescribed points the order of degeneracies of a limiting object are atmost those of generic objects.

For the general case of vector bundles equipped with a quadratic form (possibly {\it everywhere degenerate}), a compact moduli space has been constructed in  \cite[Gomez-Sols]{gs}   using Geometric invariant theory for rank three. The authors find several non-linear conditions of semi-stability and expect more such conditions in the higher rank case. The key difficulty in arises in the application of the Hilbert-Mumford criterion when one tries to interpret the (semi)-stable points on a versal space in terms of the quotient object $(V,q)$ {\it intrinsically}. To check the semi-stability condition, in addition to sub-bundles, one needs filtrations  \cite[( Rem 2.3.3.2 ii), Page 144]{schmitt}). In \cite[Section 2]{opi}, the moduli of quadratic or symplectic bundles has been constructed for any rank using techniques of Differential geometry (cf also \cite{gpgi2} and \cite{gpm3} for symplectic case). The semi-stability conditions have also been explicited in \cite[Guidice-Pustetto]{lgp} for any rank  for quadratic bundles. In \cite[Definition 16, page 10]{lgp} they simplify the semi-stability condition on decorated principal bundles of A.Schmitt  (cf \cite[page 291 Theorem 2.8.1.2]{schmitt}), which should be checked for any weighted filtration, to those of length two.

\subsection{Statement of the main results}
Let $S$ be a sky-scraper sheaf on the curve $X$. 

Let  $\cF_\cC^S$ be the functor that to a scheme $T$ associates $T$-family of quadratic bundles $(V_T,q'_T)$ where $V_T$ is a $T$-family of {\it polystable vector bundles of fixed negative degree} such that the degeneracy type of $(V_t,q'_t)$ for $t \in T$, namely the torsion part of $V_t^*/V_t$ is a quotient sheaf of $S$ under obvious equivalence relation (cf subsection \ref{formulationfunctor}).

The main theorem of this paper is equivalent to proving 
\begin{thm} The functor $\cF_\cC^S$ is proper.
\end{thm}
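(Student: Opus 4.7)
The plan is to verify the valuative criterion for $\cF_\cC^S$. Let $R$ be a discrete valuation ring with uniformizer $\pi$, fraction field $K$, and residue field $k$, and suppose $(V_K, q'_K)$ is a $K$-point of $\cF_\cC^S$: that is, $V_K$ is polystable of the prescribed negative degree on $X_K$, and the torsion part of $V_K^*/V_K$ is a quotient of $S$. The goal is to construct, uniquely up to the given equivalence relation, an $R$-family in $\cF_\cC^S(R)$ restricting to $(V_K, q'_K)$ on the generic fiber.

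I would first extend the underlying data. Since $X_R$ is a regular surface, the locally free sheaf $V_K$ extends to a locally free sheaf $V_R$ on $X_R$ by a standard spreading-out argument. The form $q'_K$ gives a morphism $\Sym^2 V_K \to \cO_{X_K}$, and after extending to a rational map on $X_R$ and dividing out the greatest common power of $\pi$, one obtains $q'_R : \Sym^2 V_R \to \cO_{X_R}$ whose restriction $q'_0$ to the special fiber is non-zero. Thus $(V_R, q'_R)$ is a flat family of quadratic bundles extending $(V_K, q'_K)$, though the special fiber need not be polystable.

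To force polystability on the special fiber, I would apply a Langton-style modification adapted to the quadratic structure. Given a destabilizing sub-bundle $F \subseteq V_0$, I would first adjust $F$ using $q'_0$---typically by replacing it with $F \cap F^\perp$, where $F^\perp$ is the orthogonal with respect to $q'_0$---so that $F$ becomes isotropic, and then perform the Hecke transform $V_R' := \ker(V_R \twoheadrightarrow V_0/F)$. The isotropy ensures that $q'_R$ descends to a quadratic form on $V_R'$, so the procedure stays within the category of quadratic bundles. A Langton-type argument shows that a discrete invariant of the Harder--Narasimhan polygon of the special fiber strictly improves at each step, and the procedure terminates in finitely many iterations with a polystable special fiber. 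This is precisely the characteristic-free semi-stable reduction for orthogonal bundles advertised in the abstract.

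The main obstacle is verifying that the resulting limit lies in $\cF_\cC^S$, namely that the degeneracy type of the special fiber $V_0^*/V_0$ is still a quotient of $S$. Here $\chi(V_t^*/V_t)$ is constant in flat families by a determinant computation, fixing the total length of the degeneracy sheaf, and upper semicontinuity of torsion controls its pointwise structure. One then checks that each quadratic Hecke transform changes the torsion quotient of $V_0^*/V_0$ only by a sub-quotient operation compatible with the filtration induced by $S$, so the special fiber keeps a degeneracy type that is a quotient of $S$. Uniqueness up to equivalence follows from the standard observation that two polystable $R$-extensions agreeing on the generic fiber have isomorphic associated graded objects on the special fiber.
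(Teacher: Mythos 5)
There is a genuine gap, concentrated in the Langton-style step. You propose to restore polystability of the special fibre by iterated Hecke transforms along destabilizing subbundles $F \subseteq V_0$, first replacing $F$ by $F \cap F^{\perp}$ ``so that $q'_R$ descends.'' But this modification need not exist or be destabilizing: if the maximal destabilizing subbundle $F$ of $V_0$ is anisotropic for $q'_0$ then $F \cap F^{\perp}=0$ and the algorithm stalls, while if $F \cap F^{\perp}$ is a proper subsheaf of $F$ it may no longer violate semistability, so the discrete invariant you invoke need not improve. Moreover, after each transform the restricted form on the new special fibre can become divisible by $\pi$, forcing a further rescaling of $q'_R$ and undoing the termination argument; controlling this interaction between elementary modifications and the quadratic form is exactly the difficulty that pushes the known proofs of semistable reduction for orthogonal bundles into Bruhat--Tits theory, and you give no argument for it. The closing claim that each transform changes the torsion of $V_0^*/V_0$ ``only by a sub-quotient operation compatible with the filtration induced by $S$'' is precisely what needs to be proved and is left as an assertion.

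The proposal also tacitly assumes the limit stays generically non-degenerate, which is false and is the main phenomenon the paper isolates: in the limit $\ker(q'_0)$ in general acquires positive rank (only its degree remains zero), so the compactification necessarily contains everywhere degenerate forms whose induced form on $V_0/\ker(q'_0)$ has degeneracy type a quotient of $S$ (Remarks \ref{limitingobjects} and \ref{diff}). The paper's route is different: it transports the problem to a two-sheeted cover where $(V,q)$ becomes a degree-zero orthogonal bundle $W$ with a framing $f: W \ra \Shalf$; for $\delta=1$, semistability of the framed module is equivalent to semistability of the ordinary vector bundle $\ker(f)$ (Prop.~\ref{kerstable}), so $\ker(f)$ is extended across the special fibre by the classical properness of polystable vector bundles, $W$ is recovered from an extension class, and $q$ is extended after rescaling by an even power of $\pi$, with a Nakayama argument showing the special-fibre form is non-zero and a degree count giving $\deg\ker(q^W_Y)=0$. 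No Langton iteration on the quadratic structure is performed, and no step of the paper's proof requires keeping the special fibre non-degenerate.
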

Note here that the type of degeneracies of $q_T$ can be higher than one. This is the case of principal interest. 

When $S$ is trivial and $\ker(q_T)$ is always $0$, one obtains the case of orthogonal bundles. In section \ref{polystable}  we furnish an elementary proof of properness of polystable orthogonal bundles. This is the second main result of this paper. This proof does not appeal to Bruhat-Tits theory unlike results of Balaji-Seshadri \cite{bs}, Balaji-Parameshwaran \cite{parmu} and J.Heinloth \cite{heinlothssred}. It is also independnent of characteristic zero methods. Other known proofs in characteristic positive do require that induced vector bundles, like those obtained from low height representations, be also semi-stable. In contrast, for orthogonal bundles, our proof does not require any extra hypothesis. Lastly, in Remark \ref{diff} we mention the difference between the cases of orthogonal bundles and that of quadratic bundles with frames.

This main theorem is equivalent to another properness result which we state below after some preparation. We first begin by explaining the idea.

By restricting ourselves to generically non-degenerate quadratic bundles, we are able to reinterpret them as orthogonal bundles on two-sheeted cover $Y$ equipped with level structure together with a $\ZZ/2$-action.  More precisely, in Theorem \ref{eqcat} we show an equivalence of category between such objects and the category $\ZZ/2$-$O_n$ bundles $(W,q')$ on some Galois cover where the underlying vector bundle $W$ is moreover endowed with a {\it level structure} $f: W \ra \Shalf$ for some sky-scraper sheaf $\Shalf$ (cf. \cite[CSS]{cssbook} or  \cite[Huybrechts-Lehn]{hl} where such these are called {\it framed modules}). 

Similarly, this equivalence may be extended to an equivalence when $q$ is {\it everywhere degenerate}, between quadratic bundles $(V,q)$ of negative degree and  $\ZZ/2$-quadratic bundles $(W,q')$  with the framed structure $f:W \ra \Shalf$, where the underlying bundle $W$ {\it has degree zero} (see Section \ref{eqcatdeg}).

Using these reinterpretations, we construct in Theorem \ref{cms} a coarse moduli space for equivalence classes of semi-stable $\ZZ/2$-quadratic bundles with frames or equivalently quadratic bundles with prescribed degeneracies in {\it any rank}  for a certain extremal value of the parameter $\delta$. The construction is not entirely Geometric invariant theoretic. In general the moduli space exists only as a quasi-projective variety. We also describe the ``points of the moduli'', in other words, the so-called $S$-equivalence classes.

Now let us state the other properness result. Let $\cF_\cC^{\delta,\Shalf}$ be the functor which associates to a scheme $T$, equivalence classes of $T$-family of $\delta=1$ semi-stable $\ZZ/2$-quadratic bundles $(W,q')$ together with frame structure with values in $\Shalf$ where $W$ has degree {\it zero}. For the equivalence relation on semi-stable objects see subsubsection \ref{sequiclass}.

In Theorem \ref{ssred} we prove the following main theorem of this paper.

\begin{thm} For $\delta=1$, the functor $\cF_\cC^{\delta,\Shalf}$ is proper.
\end{thm}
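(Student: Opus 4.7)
The plan is to verify the valuative criterion of properness for $\cF_\cC^{\delta,\Shalf}$ at $\delta=1$. Let $R$ be a discrete valuation ring with uniformizer $\pi$, fraction field $K$, and residue field $k$, and let $(W_K, q'_K, f_K)$ be a $\delta=1$ semistable $\ZZ/2$-quadratic bundle of degree zero together with a frame $f_K: W_K \ra \Shalf \otimes K$ over $Y_K := Y \times \Spec K$. The goal is to produce, uniquely up to the $S$-equivalence of subsubsection \ref{sequiclass} and possibly after a finite base change of $R$, a semistable extension $(W_R, q'_R, f_R)$ over $Y_R$ whose generic fibre recovers the given data.

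First, by standard semistable reduction for vector bundles on curves applied on the double cover $Y$, one extends $W_K$ to a locally free sheaf $W_R$ on $Y_R$ (any torsion-free extension is locally free since $Y_R$ is a regular surface). The $\ZZ/2$-action transports to $W_R$ once an extension is chosen, since equivariance is a closed condition and the cover $Y \ra X$ is fixed. The quadratic form $q'_K \in H^0(Y_K, \Sym^2 W_K^*)$ and the frame $f_K \in H^0(Y_K, \Hom(W_K, \Shalf_K))$ are sections of coherent sheaves pulled back from $R$-flat families, so after rescaling by a suitable power of $\pi$ they extend to $q'_R$ and $f_R$ whose restrictions to the special fibre $Y_k$ are nonzero. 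This yields a candidate limit $(W_k, q'_k, f_k)$.

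The main difficulty is that $(W_k, q'_k, f_k)$ need not be semistable for $\delta=1$. Following a Langton-type modification procedure adapted to this decorated setting, if the special fibre fails semistability then a maximal destabilising weighted filtration $W_\bullet$ of $W_k$ exists, and after an elementary transformation it becomes compatible with $q'_k$, $f_k$, and the $\ZZ/2$-action. Replacing $W_R$ by the kernel of the composition $W_R \ra W_k \ra W_k/W_\bullet$ strictly decreases a discrete Harder--Narasimhan-type invariant, so the procedure terminates in finitely many steps and produces a semistable extension. The extremal value $\delta=1$ is crucial here: as recorded in the construction of the coarse moduli space in Theorem \ref{cms}, it reduces the semistability inequality from the general weighted filtration condition of Schmitt to conditions on $\ZZ/2$-invariant isotropic subsheaves compatible with the frame, which is exactly the data preserved by the elementary transformations.

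The hardest step, requiring the bulk of the work, is verifying that each modification is simultaneously compatible with the quadratic structure, the frame, and the $\ZZ/2$-equivariance, so that the invariant above really strictly decreases in the decorated category rather than merely for the underlying vector bundle. Once this is established, uniqueness up to $S$-equivalence follows by comparing two semistable extensions with the same generic fibre: via the equivalence of categories of Theorem \ref{eqcat}, the question reduces to the corresponding comparison for quadratic bundles of negative degree with prescribed degeneracy, where the properness argument for polystable orthogonal bundles in Section \ref{polystable} identifies the associated graded admissible objects of the two special fibres. By the definition of $S$-equivalence in subsubsection \ref{sequiclass}, this is exactly what is needed to conclude properness.
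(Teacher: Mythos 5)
Your overall skeleton (valuative criterion, extend the bundle, extend the form and frame after rescaling by a power of $\pi$, then repair the special fibre) is reasonable in outline, but the step you yourself flag as ``the hardest step, requiring the bulk of the work'' --- that a Langton-type elementary-modification procedure can be run compatibly with the quadratic form, the frame, and the $\ZZ/2$-action, with a strictly decreasing invariant --- is precisely the content that is missing, and it is not a routine verification. Making Langton work for decorated or principal objects is exactly the difficulty that forces the Bruhat--Tits machinery in \cite{bs} and \cite{heinlothssred}, which this paper is explicitly trying to avoid; asserting that the destabilising filtration ``becomes compatible'' with all three structures after an elementary transformation is not a proof. There is a second gap: framed modules are known to pick up torsion in the limit, so extending $W_K$ first as a locally free sheaf and then extending $f_K$ by rescaling gives you no control on the special fibre --- $f_k$ need not be surjective (which $\delta=1$ semistability forces, Proposition \ref{delta=1surj}), and you never verify that $\deg(\ker(q'_k))=0$, which is required for the limit to be an object of the relaxed category at all.

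The paper's proof avoids Langton entirely by exploiting the extremal value $\delta=1$ in a different way than you do. By Proposition \ref{kerstable}, a framed module $f\colon W\to\Shalf$ is $\delta=1$ semistable iff $\ker(f)$ is a semistable vector bundle (of negative degree), and by Proposition \ref{delta=1surj} the framing is surjective. So one extends $V=\ker(f)$ across the special fibre using the classical properness of semistable/polystable vector bundles, then reconstructs $W$ as the Hecke modification determined by extending the class of $0\to V\to W\to S\to 0$ in the relevant $\Ext^1$ (this is subsection \ref{extn-ul-fm}); the resulting $W_Y$ is automatically torsion free and $\delta=1$ semistable because its frame kernel is semistable by construction. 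The quadratic form is then extended by a valuation argument: after diagonalising at a large collection of base points and rescaling by $\pi^{-2w_1}$, a pole along the special fibre would produce a nonzero section of $\Sym^2 W_Y^*$ vanishing at more points than the semistability of $\Sym^2\ker(f)^*$ (positive degree) permits. Finally one checks $\deg(\ker(q^W_Y))=0$ by a degree count on $W_Y\to W_Y^*$. The limit form is in general everywhere degenerate --- the functor was enlarged in Section \ref{eqcatdeg} precisely to accommodate this --- and your proposal does not engage with that phenomenon.
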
 

Thus under the above hypotheses, the quasi-projective moduli space of Theorem \ref{cms} is also proper.


\subsection{Comments}
The functor that to a scheme $T$ associates semi-stable $T$-families of principal $G$-bundles on a curve $X$ does not seem to be proper (\cite[Introduction]{bs}). It seems necessary to pass to equivalence classes, which in this case consists of $S$-equivalence classes $G$-bundles. We state therefore the properness of {\it polystable} orthogogonal bundles in Section \ref{polystableorth} and {\it equivalence classes} of $\delta=1$ semi-stable quadratic framed modules in Section \ref{sssred}. Moreover in subsections \ref{extn-ul-fm} and \ref{extn-qf} we extend onto the special fiber, the framing morphism and the quadratic form that are fixed only {\it upto multiplication by scalars}.

We call the depth of a quadratic form $q$ to be the depth of the sky-scraper sheaf $V^*/q(V)$. Then in the {\it depth one} case, the level - can be alternatively recovered as the ``$-1$-eigenspace'' from the Galois action on the fibers. Thus in Theorem \ref{depth1}, we  deduce the properness of quadratic bundles in depth one case for any rank from the properness of the $\Gamma-\SO_n$-functor following the recent work say \cite{vbcss}. This provides an alternative compactification in the depth one case. In particular, in the case of quadratic bundles of rank $3$ as in \cite{gs}, if one tries to interpret them as parabolic vector bundles of rank $2$, then one recovers only the case depth one.

En route to the construction of moduli space, we needed to construct the moduli of framed modules \cite[Huybrechts-Lehn]{hl} (or Level structures \cite{css}) together with a Galois action. Since framed modules are known to pick up torsion in the limit, i.e a family of framed modules $f: W \ra \Shalf$ with $W$ locally free generically may contain an object in the limit where $W$ has torsion, so to be certain of the (semi)-stability conditions, we furnish proofs in the first version posted on arXiv. In this paper we content ourselves with just the statements because most proofs are straightforward generalizations of \cite{hl}. The semi-stability conditions appear with a parameter $\delta$. When $\delta > |\Shalf|$ then these spaces are empty. So we have often normalized $\delta$ by length $|\Shalf|$ of $\Shalf$.

We have used the equivalence, say when the quadratic form is generically non-degenerate, between vector bundles of negative degree and those of degree zero on some cover together with framed structure to draw out parallels between the general case of quadratic bundles and the well studied special case of orthogonal bundles. For instance to define the $S$-equivalence relation between semi-stable objects (cf Section \ref{pointsmoduli}) and the semi-stability conditions the parallels between the are easier to see. Most importantly, in the proofs of properness in Theorems \ref{ssred} and \ref{polystableorth} we see the crucial difference between them. Further as a tool they were  available \cite[HL]{hl}, so we decided to use them for the construction of coarse moduli space. 

The setting of \cite[HL]{hl} is for framed modules over a general projective scheme. Though our main application is over curves, but since the generalization of \cite[HL]{hl} to Galois situation follows easily, so we made the choice of casting results in that generality. 

We also show that our definition of (semi)-stability agrees with that in \cite{gs} for some suitable ranges in the values of the parameter to justify the sense in which we have generalised the result of \cite[GS]{gs}.

The preceding discussion also applies to pairs $(V,q)$ where $q: \Lambda^2 V \ra X$ is an alternating form that is only generically non-degenerate. To avoid repetition, we have indicated the changes only when it is crucial.
   
We have cross-checked our results when the framed structure is trivial (cf Remark \ref{cc}). 

\subsection{A Comparison} 
The setting of \cite{gs2} is over a smooth projective scheme. We recall Definition 5.1 \cite[]{gs2} which defines an orthogonal sheaf as a pair $(E,\varphi: \Sym^2 E \ra \cO_X)$ where $E$ is torsion free (OS3) and $\varphi_U: E_U \ra E^*_U$ induces an isomorphism over the open subset $U$ over which $E$ is locally free (OS4). Note that since over curves, the notions of torsion freeness and being locally free agree, so effectively $\varphi$ is an isomorphism over whole of $X$. In other words, $E$ is a usual orthogonal bundle. However, we are interested in $\deg(V)<0$ in which case $q$ can never induce an isomorphism.

\subsection{Layout}
The following three sections are preparatory intended mainly to define precisely the objects of interest. In subsection \ref{equi} we prove a somewhat formal result that allows us to reinterpret the problem of fixing degeneracy locus and type in terms of framed modules. In Section \ref{stenpro}, we list some important consequences of the extremal value of $\delta$ at one which will be used crucially. The section \ref{construction} exploits the idea of Ramanathan Lemma to construct the moduli by showing that the forgetful functor is affine. The sections \ref{sssred}, \ref{s-di} and \ref{polystableorth} form the heart of the paper proving the properness results. Finally we make a comparison with \cite[Gomez-Sols]{gs} and \cite[Guidice-Pustetto]{lgp} in the last section.

\subsection{Remark on Notation} 
 We identify a vector bundle $V$ with its sheaf of sections. The $2$-uple $(W,q')$ will always denote a vector bundle {\it of degree zero} together with a  quadratic form $q'$ on some Galois cover $Y$ and the notation $(V,q)$ will signify that $q$ is a quadratic form on the base curve $X$ and that the degree of $V$ is negative.

\section{Quadratic Bundles}
\begin{defi} A quadratic bundle $(V,q)$ is a pair where $q: V \ra V^*$ is a vector bundle homomorphism to the dual of $V$ such that  the composition of the natural isomorphism $V \ra (V^*)^*$ with the dual of $q$, namely $q^*: (V^*)^* \ra V^*$ identifies with $q$.
\end{defi}

When we will consider quadratic bundles $(W,q')$ of degree zero on some cover $Y$ of $X$, we will assume that the degree of $\ker(q)$ is also {\it zero}.

\begin{prop} \label{extsim} Let $(V,q)$ be a quadratic bundle on $X$ such that $q$ is generically an isomorphism. Then putting $S= V^*/q(V)$, there is a natural isomorphism of the sky-scraper sheaves $S \simeq \Ext^1_X(S, \cO_X)$.
\end{prop}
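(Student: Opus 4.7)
The plan is to extract the isomorphism from the long exact sequence obtained by dualizing the defining short exact sequence of $S$. Since $q$ is generically an isomorphism and symmetric, $V$ and $V^*$ have the same rank and $q$ is injective as a sheaf map, giving the short exact sequence
\begin{equation*}
0 \lra V \xrightarrow{\,q\,} V^* \lra S \lra 0,
\end{equation*}
in which $S$ is a torsion (sky-scraper) sheaf by hypothesis.

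Next I would apply the functor $\Hom_X(-, \cO_X)$ and pass to the associated long exact sequence of Ext sheaves. Because $S$ is torsion on the smooth curve $X$ we have $\cHom_X(S, \cO_X)=0$, and because $V^*$ is locally free we have $\cExt^1_X(V^*, \cO_X)=0$. Under the natural identifications $\cHom_X(V^*,\cO_X)\simeq V^{**}\simeq V$ and $\cHom_X(V,\cO_X)\simeq V^*$, the connecting piece of the long exact sequence reads
\begin{equation*}
0 \lra V \xrightarrow{\,q^*\,} V^* \lra \cExt^1_X(S,\cO_X) \lra 0.
\end{equation*}

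The symmetry condition built into the definition of a quadratic bundle is precisely the statement that under the canonical isomorphism $V \simeq V^{**}$ the dual morphism $q^*$ coincides with $q$. Hence the two short exact sequences
\begin{equation*}
0 \to V \xrightarrow{q} V^* \to S \to 0 \quad \text{and} \quad 0 \to V \xrightarrow{q^*} V^* \to \cExt^1_X(S,\cO_X) \to 0
\end{equation*}
have identical left and middle terms with identical maps, and therefore their cokernels are canonically identified, giving the desired natural isomorphism $S \simeq \cExt^1_X(S,\cO_X)$. I expect no serious obstacle here; the only point that deserves a line of care is the identification $q^* = q$ (the symmetry of the quadratic form) that makes the comparison of cokernels canonical rather than merely abstract, and the vanishing $\cExt^1_X(V^*,\cO_X)=0$ which uses that $X$ is a smooth curve so that locally free sheaves have no higher Ext into $\cO_X$.
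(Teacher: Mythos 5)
Your proposal is correct and follows exactly the paper's own argument: dualize the short exact sequence $0 \to V \xrightarrow{q} V^* \to S \to 0$ by $\Hom_X(-,\cO_X)$, use the vanishing of $\Hom_X(S,\cO_X)$ and of $\Ext^1_X(V^*,\cO_X)$, and invoke the symmetry $q^* = q$ to identify the two cokernels. The paper states this in one line; your write-up merely makes the same steps explicit.
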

\begin{proof} This follows immediately by applying the functor $\Hom_X(-, \cO_X)$ to the short exact sequence 
$0 \ra V \stackrel{q}{\ra} V^* \ra S \ra 0 $
and remarking that $q^* =q$.
\end{proof}

\begin{defi} A morphism between quadratic bundles $\theta: (V_1,q_1) \ra (V_2,q_2)$ is an isomorphism $\theta: V_1 \ra V_2$ of the underlying vector bundles such that the following diagram commutes
\begin{equation*}
\xymatrix{
0 \ar[r] & V_1 \ar[r]^{q_1} \ar[d]^{\theta} & V_1^* \ar[r]                                       & S_1 \ar[r] & 0 \\
0 \ar[r] & V_2 \ar[r]^{q_2}                          & V_2^* \ar[u]^{\theta^*}   \ar[r]         & S_2  \ar[r]\ar@{-->}[u]_{\ol{\theta^*}} & 0
}
\end{equation*}
Similarly, one can define the local automorphism group $\Aut_x(V,q)$ of a quadratic bundle at $x$ by taking the stalks at a point. It is $g \in \Aut(V_x)$ such that
 \begin{equation} \label{autqbun}
\xymatrix{
0 \ar[r] & V_x \ar[r]^{q_1} \ar[d]^{g} & V_x^* \ar[r]                                       & S_x \ar[r] & 0 \\
0 \ar[r] & V_x \ar[r]^{q_2}                          & V_x^* \ar[u]^{g^*}   \ar[r]         & S_x  \ar[r]\ar@{-->}[u]_{\ol{g^*}} & 0
}
\end{equation}

\end{defi}




\begin{rem} The general case of quadratic bundles $q: \Sym^2 V \ra L$ with values in an arbitrary line bundle $L$ on $X$ can be reduced to this case by going to a $2$-sheeted covering $p:Y \ra X$ where $p^*L$ becomes the square of a line bundle $M$ on $Y$.  Then the moduli of $(p^*V, p^*q,p^*L)$ is isomorphic to $p^*q: \Sym^2p^*V \otimes M^{-1} \ra \cO_X$. 
\end{rem}

\section{Two equivalences of functors}
\subsection{First functor} \label{formulationfunctor}
We concentrate on quadratic bundles $(V,q)$ such that $q$ is generically an isomorphism. Let $n$ be a fixed integer that denotes the rank of the underlying vector bundles.

We now describe a datum that will be fixed throughout this paper. We fix 
\begin{enumerate}
\item a line bundle $L$ on $X$
\item a non-zero global section $s \in \Gamma(X,L)$
\item a sky-scraper sheaf $S$ on $X$ which is a quotient of $\cO_X^n$
\end{enumerate}
such that the section $s$ and the sheaf $S$ satisfy the following compatibility condition
$$div(s) = length(S).$$

Let $\cC$ denote the category whose objects are quadratic bundles $(V,q)$ satisfying the following properties
\begin{enumerate}
\item  $q: V \ra V^*$ is generically an isomorphism
\item  the quotient $V^*/q(V)$ is a sky-scraper sheaf denoted $S$ on $X$
\item $\det(V^*)^2=L$
\item the natural map $\det(q): \det(V) \ra \det(V^*)$ comes from the section $s \in \Gamma(X,L)$.
\end{enumerate}
Morphisms between such bundles are the usual morphisms between quadratic bundles.

Let $\cF_{\cC}: \{ Schemes\}^{op} \ra \{Sets \}$ be the functor which to a scheme $T$ associates the equivalence classes of $2$-uples $(V,q_T)$ where $V$ is a vector bundle on $X \times T$, $q_T: V \ra V^*$ is a symmetric morphism satisfying
 
\begin{enumerate} \label{fixprop}
\item $\det(V^*)^2 = p_X^*L \otimes p_T^*M_1$ for some line bundle $M_1$ on $T$
\item $V^*/q_T(V) = p_X^*S \otimes p_T^*M_2$ for some line bundle $M_2$ on $T$
\item For all $t \in T$, the associated map $q_t: V_t \ra V_t^*$ induces the section $s \in \Gamma(X,L)$ on taking determinants.
\end{enumerate} 

We say that two $2$-uples $(V_1,q_1)$ and $(V_2,q_2)$ are equivalent if

\begin{enumerate}
\item there exists a line bundle $N$ on $T$ 
\item a global section $\eta \in \Gamma(T,N^{-2})$ 
\item an isomorphism $\phi: V_1 \ra V_2 \otimes p_T^*N$
\end{enumerate}
such that the following diagram commutes
\begin{equation}
\xymatrix{
V_1 \ar[r]^{q_1} \ar[d]_{\phi} & V_1^* \\
V_2 \otimes p_T^*N \ar[r]_{q_2 \otimes p^*\eta} & V_2^* \otimes p_T^*N^* \ar[u]^{\phi^*}
}
\end{equation}

\subsection{Second Functor}
In this subsection we shall consider the following category $C$.

\begin{defi} Let $p:Y \ra X$ be a two-sheeted cover of $X$. Let $(W,q')$ be a vector bundle on $Y$ with an everywhere non-degenerate quadratic form $q'$, such that the action of the Galois group $\ZZ/2$ on $Y$ lifts to $(W,q')$. Suppose furthermore that we are given a natural surjective homomorphism $f: W \ra \Shalf$, where $\Shalf$ is a sky-scraper sheaf on $Y$ such that 
\begin{enumerate}
\item support of $\Shalf$ equals the ramification locus of $p$
\item length of $\Shalf$ equals the degree of the section $\disc$ of $p: Y \ra X$.
\item the induced action of $\ZZ/2$ on $\Ker(f)$ is trivial along the fibers.
\end{enumerate}

We call such a data a $\ZZ/2$-quadratic bundle on the curve $Y$ with frame structure $(f, \Shalf)$. Morphisms between such bundles are isomorphisms of the underlying bundles that respect the quadratic forms and frame structures.  Similarly, one can define local automorphism group $\Aut_y^{\ZZ/2}( (W,q') \ra \Shalf)$ by taking stalk at the point $y \in Y$. It is $g \in \Aut_y^{\ZZ/2}(W)$ such that
\begin{equation} \label{autobun}
\xymatrix{
0 \ar[r] & \Ker(q')_y \ar[r]  \ar@{-->}[d]^{g} & W_y^* \ar[r]                                       & {\Shalf}_y \ar[r] & 0 \\
0 \ar[r] & \Ker(q')_y \ar[r]                         & W_y^* \ar[u]^{g^*}   \ar[r]         & {\Shalf}_y  \ar[r]\ar[u]^{\ol{g^*}} & 0
}
\end{equation}

\end{defi}
We say that a $T$-family $f_T: W_T \ra p_Y^*\Shalf$ is {\it flat} if $f_t \neq 0 $ for all $t$ points of $T$.
Let $p: Y \ra X$ be a Galois cover of smooth projective curves with Galois group $\Gamma$. Let $\cF_C^{\Shalf}: \{ Schemes \}^{op} \ra \{ Sets \}$ be the functor that to a scheme $T$ associates a flat $T$-family $f_T: E=(W,q') \ra p_Y^*(\Shalf)$ of orthogonal bundles $E=(W,q')$ on $Y \times T$ along with a surjective morphism to $f_T: W \ra p_Y^*\Shalf$, such that for $t \in T$ the $\Gamma$-action of $p: Y \ra X$ lifts to $E_t$ and such that the induced $\Gamma$-action on $\Ker(W \stackrel{f_t}{\lra} \Shalf)$ is trivial. 

\subsection{Equivalence} \label{equi}
We now show the equivalence.
\begin{thm} \label{eqcat} 
The functors $\cF_\cC^S$ and $\cF_C^{\Shalf}$ are equivalent.
\end{thm}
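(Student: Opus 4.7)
The plan is to construct explicit quasi-inverse functors $F: \cF_\cC^S \to \cF_C^{\Shalf}$ and $G: \cF_C^{\Shalf} \to \cF_\cC^S$, and to verify compatibility with the equivalence relations. The key geometric input is the ramified double cover $p: Y \to X$ branched along the reduced effective divisor $\div(s)$; this cover is determined up to unique isomorphism by the discriminant datum $s \in \Gamma(X,L)$, and its ramification divisor $R$ on $Y$ satisfies $p^*L \cong \cO_Y(2R)$. By hypothesis the length of $\Shalf$ equals the length of $S$, and $\Shalf$ is supported on $R$.

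For the forward direction $F$, given a family $(V_T, q_T)$ in $\cF_\cC^S$, I would pull back along $p \times \Id_T$ to obtain $p^*V$ and $p^*q_T: p^*V \to p^*V^*$ with cokernel $p^*S \otimes p_T^*M_2$. Proposition \ref{extsim} endows $p^*S$ with a natural symmetric self-duality induced by $p^*q_T$. At each ramification point the $\ZZ/2$-action splits the local stalk of $p^*S$ into $\pm 1$-eigenspaces, among which there is a unique $\ZZ/2$-stable Lagrangian subsheaf; globally these assemble into an inclusion $p_Y^*\Shalf \hookrightarrow p^*S$. Pulling back this inclusion along $p^*V^* \twoheadrightarrow p^*S$ produces a locally free sheaf $W$ fitting into
\begin{equation*}
0 \to p^*V \to W \to p_Y^*\Shalf \to 0,
\end{equation*}
and dually $0 \to W \to p^*V^* \to p_Y^*\Shalf \to 0$. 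Because $p_Y^*\Shalf$ is Lagrangian, the form $p^*q_T$ extends uniquely to an everywhere non-degenerate symmetric form $q': W \to W^*$. The Galois action preserves $W$ by construction, and its restriction to $\Ker(f) = p^*V$ is fiberwise trivial since this subsheaf descends to $X$; the frame $f: W \twoheadrightarrow W/p^*V \cong p_Y^*\Shalf$ completes the output datum.

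For the backward direction $G$, start with $(W, q', f)$ on $Y \times T$. The hypothesis that the $\ZZ/2$-action on $\Ker(f)$ is fiberwise trivial is precisely the descent datum producing a locally free sheaf $V$ on $X \times T$ with $p^*V = \Ker(f)$; flatness of the family ensures the expected rank. The restriction of $q'$ to $\Ker(f)$, composed with the identification $W \cong W^*$ provided by $q'$ and the surjection $W^* \twoheadrightarrow (\Ker f)^*$, yields a $\ZZ/2$-equivariant symmetric morphism $p^*V \to p^*V^*$; by descent this produces the desired $q_T: V \to V^*$ on $X \times T$, whose cokernel is $p_X^*S$ up to a line bundle on $T$ and whose determinant recovers the fixed section $s$.

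The checks that $F \circ G \cong \Id$ and $G \circ F \cong \Id$ are local at ramification points: the first follows from $(p_*p^*V)^{\ZZ/2} = V$ via the projection formula, while the second uses uniqueness of the $\ZZ/2$-stable Lagrangian in $p^*S$. Finally, the scalar ambiguity in the $\cF_\cC^S$-equivalence by $\phi: V_1 \to V_2 \otimes p_T^*N$ twisted by $\eta \in \Gamma(T, N^{-2})$ corresponds under $F$ to the induced twist of $W$ and of the frame $f$ valued in $p_Y^*\Shalf \otimes p_T^*N$, matching the natural ambiguity on the framed side. The main obstacle I anticipate is the explicit local construction and uniqueness of the Lagrangian $p_Y^*\Shalf \subset p^*S$ and the verification that the resulting $W$ varies flatly in $T$; this requires working in local coordinates at a ramification point and exploiting the prescribed shape of the discriminant $s$ together with Proposition \ref{extsim}.
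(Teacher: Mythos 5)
Your proposal follows the same skeleton as the paper's proof: pass to the double cover $p:Y\to X$ cut out by the discriminant, single out a ``half'' submodule $\Shalf\subset p^*S$, Hecke-modify $p^*V$ along it inside $p^*V^*$ to obtain $W$, extend the form to an everywhere non-degenerate one, and invert the construction by taking $\Ker(f)$ and descending. Two points, however, need repair before this is a proof.

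First, your characterization of $\Shalf$ as ``the unique $\ZZ/2$-stable Lagrangian subsheaf'' singled out by the eigenspace decomposition does not work as stated. Near a ramification point the $\pm1$-eigenspaces of a stalk of $p^*S$ are $\cO_X$-submodules but not $\cO_Y$-submodules (in $\cO_{Y,y}/u^{2k}$ with involution $u\mapsto-u$ the $+1$-eigenspace is spanned by even powers of $u$), so the Lagrangian you want is not ``among'' them; more seriously, when a stalk of $p^*S$ is not cyclic --- say $(\cO_Y/u^2)^{\oplus 2}$ with a hyperbolic pairing --- there are several $\ZZ/2$-stable Lagrangian $\cO_Y$-submodules ($u\cdot(\cO_Y/u^2)^{\oplus2}$ as well as either direct summand), so ``the'' Lagrangian is not well defined and your $F$ is not a functor. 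The paper avoids this by \emph{defining} $\Shalf$ as the submodule generated by $\Img(m(y)^i)\cap\Ker(m(y)^i)$, which on a cyclic stalk $\cO_Y/u^{2k}$ gives $u^k\cO_Y/u^{2k}$ and is manifestly canonical on direct sums; you need this (or an equivalent canonical recipe) to pin down the specific $\Shalf$ appearing in the statement of Theorem \ref{eqcat}. Second, the sentence ``because $p_Y^*\Shalf$ is Lagrangian, the form extends uniquely to an everywhere non-degenerate $q'$ on $W$'' is precisely the content of the paper's Proposition \ref{fact-gen-non-deg}, which occupies the bulk of the argument there (a push-out/$\Ext$ computation using $\Ext^1(\Shalf,\cO_Y)\simeq\Shalf$ and the identification of $\ker(p^*S\to\Ext^1(\Shalf,\cO_Y))$ with $\Shalf$). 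Your Lagrangian heuristic is the correct reason --- locally $W$ is the lattice between $p^*V$ and $p^*V^*$ whose image in $p^*S$ is self-annihilating for the pairing of Proposition \ref{extsim}, hence $W$ is its own dual lattice --- and written out carefully it would give a cleaner argument than the paper's diagram chase, but as it stands it is an assertion, not a proof, and it is exactly where the ``half'' property of $\Shalf$ enters.
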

\begin{proof} Let $(V_T,q_T)$ be a $T$-family in $\cF_\cC$. The morphism  $\det(q_T) : \det(V_T) \ra \det(V^*_T)$ of line bundles on $X \times T$ gives a global section $s_T$
of $\det(V^*_T)^2$. Now by \ref{fixprop}(3), for any $t \in T$, the zero locus of $s_t$ is the same as $zero(s)$. Let $p: Y_s \ra X$ be a two-sheeted curve that realises the square root of this section. We describe this construction more formally to introduce the notation. 

We denote by $\det(V^*)$ by $L$ and $p: \Proj (\Sym^{\bullet} (\cO_X \oplus L^{-1})) \ra X$ by $\PP$. Let $x$ be the section of $\cO_{\PP}(1)$ defined by the "first co-ordinate" map 
$$ \cO_X \ra p_* \cO_\PP(1) = \cO_X \oplus L^{-1}$$
and $y$ be the section of $\cO_{\PP}(1) \otimes p^*(L)$ defined by the "second co-ordinate" map
$$ \cO_X \ra p_* (\cO_{\PP}(1) \otimes p^*(L)) = (\cO_X \oplus L^{-1}) \otimes L = L \oplus \cO_X. $$
Thus the section $y^2 - p^*(s) x^2$ is a section of $\cO_{\PP}(2) \otimes p^*L^2$. The curve $Y_s$ is defined as the zero sub-scheme of this section. We denote the projection onto $X$ again by $p$ and the zero locus of $s$ by $D$.

On $X \times T$, denoting $p_X: X \times T \ra X$, we have the following short exact-sequence that defines the sky-scraper sheaves
\begin{equation}
0 \ra V_T \stackrel{q_T}{\ra} V^*_T \ra p_X^*S \ra 0 
\end{equation}
\begin{equation}
0 \ra \det(V_T) \stackrel{s_T}{\ra} \det(V^*_T) \ra \cO_{div(s_T)} \ra 0. \label{detmap}
\end{equation}

For any $t \in T$, the following sets are equal : the degeneracy locus of $q_t$, support of the divisor $D$ , ramification locus $\ram(p)$ and the support of $S$.
 
\begin{defi} Multiplication by the section $y \in \cO_{\PP}(1) \otimes p^*L$ defines the morphism $m(y): p^*S \ra p^*S \otimes \cO_{\PP}(1) \otimes p^*L \simeq p^*S$, where for the last isomorphism, one uses local trivializations morphisms of $\cO_{\PP}(1) \otimes p^*L$ is small neighbourhoods around points in the support of $p^*S$. We define the "half" $\Shalf$ of the module $p^*(S)$ as the sub-module of $p^*S$ generated by the sub-modules $$\Img(m(y)^i) \cap \Ker(m(y)^i)$$ where $1 \leq i \leq \deg(s)$.
\end{defi}

The above definition shows the naturality of the definition of $\Shalf$.


\begin{prop}
The quotient module $p^*S/\Shalf$ is canonically isomorphic to $\Shalf$. So we obtain
\begin{equation}  \label{S/2seq}
 0 \ra \Shalf \ra p^*S \ra \Shalf \ra 0. 
 \end{equation}
\end{prop}
\begin{proof} The claim can be checked at stalks, and the stalks decompose as direct sums. The direct summands have even depth. So it suffices to check when $p^*(S)= \CC[T]/T^{2n}$. Now  $\Img(m(y)^i) \cap \Ker(m(y)^i)= T^{max \{i, 2n-i\}}\CC[T]/T^{2n}$. So the sub-module of $\CC[T]/T^{2n}$ generated, when $i$ varies in $1, \cdots, \deg(s)$, is $T^n \CC[T]/T^{2n}$. Now (\ref{S/2seq}) follows immediately.
\end{proof}
Let us recall the Rees lemma in Homological algebra.

\begin{thm}[Rees] \label{rees} Let $R$ be a ring and $x \in R$ be element which is neither a unit nor a zero divisor. Let $R^*= R/(x)$. For  a $R$-module $M$, suppose moreover that $x$ is regular on $M$. Then there is an isomorphism of 
$$Ext^n_{R^*} (L^*, M/xM) \simeq Ext^{n+1}_R(L^*, M)$$
for every $R^*$-module $L^*$ and every $n \geq 0$.
\end{thm}

\begin{lem} \label{ext1s} For any sky-scraper sheaf $S$ on $X$ we have $\Ext^1_X(S,\cO_X) \simeq S$.
\end{lem}
\begin{proof} Without loss of generality, we can suppose that the support of $S$ is just one point $p \in X$ and that $S= \cO_{X,p} /t^i$ where $t$ is a uniformizer at $p$. Applying Rees lemma to $t^i$, one gets $\Ext^1_X(\cO_{X,p}/t^i, \cO_X) \simeq \Hom_{\cO_{X,p}/t^i} (\cO_{X,p}/t^i, \cO_{X,p}/t^i) = \cO_{X,p}/t^i$.
\end{proof}

For convenience we denote $Y \times T \ra X \times T$ also by $p$. The extension class 
\begin{equation} \label{extup}
[0 \ra p^*V_T \ra p^*V^*_T \ra p^*_YS \ra 0 ] \in \Ext^1_{\cO_{Y \times T}} (p^*_YS, p^*V_T)
\end{equation}
 maps to an extension class in $\Ext^1_{\cO_{Y \times T}}(p_Y^*\Shalf, p^*V_T)$, which we denote by 
\begin{equation} \label{extW}
0 \ra p^*V \ra W \ra \Shalf \ra 0. 
\end{equation}

In what follows, for sake of readability we omit $T$, as if $T$ were we a point. The proof for an arbitrary $T$ carry over word for word. 

By ``transport of structure'', the action of the Galois group $\Aut(Y/X)$ on $p^*V$ extends to its Hecke-modification $W$.

\begin{prop} \label{fact-gen-non-deg} The quadratic form $p^*q$ on $p^*V$ extends uniquely to an everywhere non-degenerate quadratic form $q'$ on $W$.
\end{prop}
\begin{proof} Dualizing of the extension class (\ref{extW}), we get  
\begin{equation} \label{dualextW}
0 \ra W^* \ra p^*V^*\ra \Ext^1(\Shalf, \cO_Y) \ra  0
\end{equation}
We shall prove that the composite of $p^*V \stackrel{p^*q}{\lra} p^*V^*$ with $p^*V^* \ra \Ext^1(\Shalf, \cO_Y)$ is zero, so by (\ref{dualextW}), we would have a factorization 
\begin{equation} \label{start}
\xymatrix{
       & p^*V \ar[d]^{p^*q} \ar@{.>}[ld]^{q_1} & \\
W^* \ar[r] & p^*V^* \ar[r] & \Ext^1(\Shalf, \cO_Y)
}
\end{equation} 
Then we will show that composing  $q_1^*: W \ra p^*V^*$ with $p^*V^* \ra \Ext^1(\Shalf, \cO_Y)$ is zero. This furnishes the desired factorization
\begin{equation*}
\xymatrix{
      & W \ar[d]^{q_1^*} \ar@{.>}[ld]^{q'} & \\
W^* \ar[r] & p^*V^* \ar[r] & \Ext^1(\Shalf, \cO_Y).
}
\end{equation*}
Let us remark that $p^*V^* \ra \Ext^1(\Shalf, \cO_Y)$ corresponds to evaluating the sequence (\ref{extW}).

Firstly, since the extension (\ref{extup}) arises from  $\Id \in \Hom(p^*S, p^*S)$ under the connecting homomorphism, so in $\Ext^1(p^*S, p^*V^*)$ its image is zero. This corresponds to taking the push-out of (\ref{extup}) by $p^*q: p^*V \ra p^*V^*$. It follows now by the commuting squares,
 \begin{equation*}
\xymatrix{
\ar[r] &  \Hom(p^*S, p^*S) \ar[r] \ar[d] & \Ext^1(p^*S, p^*V) \ar[r] \ar[d] & \Ext^1(p^*S, p^*V^*) \ar[d] \\
\ar[r] &  \Hom(\Shalf, p^*S) \ar[r]      & \Ext^1(\Shalf, p^*V) \ar[r]      & \Ext^1(\Shalf, p^*V^*) 
}
\end{equation*}
the  push-out of (\ref{extW}) by $p^*q: p^*V \ra p^*V^*$ becomes the trivial extension
\begin{equation} \label{text}
\xymatrix{
0 \ar[r] & p^*V \ar[r] \ar[d]^{p^*q} & W \ar[r] \ar@{.>}[d] & \Shalf \ar[r] \ar@{.>}[d] & 0 \\
0 \ar[r] & p^*V^* \ar[r]                          & p^*V^* \oplus \Shalf \ar[r] & \Shalf \ar[r] & 0.
}
\end{equation}
The composite of $p^*V \stackrel{p^*q}{\ra} p^*V^* \ra \Ext^1(\Shalf, \cO_Y)$, corresponds to evaluating by  $v \in p^*V$ to get push out of  the bottom row of (\ref{text}). Taking the push-out of (\ref{text}) which is the trivial extension,
\begin{equation}
\xymatrix{
0 \ar[r] & p^*(V) \ar[r]  \ar@{->}[d]^{\ev(v)}            & p^*(V) \oplus \Shalf \ar[r] \ar@{.>}[d]^{\ev(v)} & \Shalf \ar[r] \ar@{.>}[d]& 0 \\
0 \ar[r] & \cO_Y \ar[r] & \cO_Y \oplus \Shalf \ar[r] & \Shalf \ar[r] & 0 
}
\end{equation}
we again get trivial extensions. This means, in other words, that the composite is zero. This furnishes the map $q_1 : p^*(V) \ra W^*$.  

By the sequence (\ref{dualextW}) and (\ref{extup}), we have a factorization in equation (\ref{start})
\begin{equation} 
\xymatrix{
       & p^*V \ar[d]^{p^*q} \ar[ld]^{q_1} & \\
W^* \ar[r] & p^*V^* \ar[r] \ar[d] & \Ext^1(\Shalf, \cO_Y) \\
            & p^*S \ar@{.>}[ru] &
}
\end{equation} 

 where $p^*S \ra \Ext^1(\Shalf, \cO_Y)$ is a natural surjective map . By Lemma \ref{ext1s}, we have $\Ext^1(\Shalf, \cO_Y) \simeq \Shalf$ and thus  $\ker( p^*S \ra \Ext^1(\Shalf, \cO_Y)) $ identifies with $\Shalf$ because $\Shalf$ is "half" of $p^*S$. We thus get
$$0 \ra p^*V \stackrel{q_1}{\ra} W^* \ra \Shalf \ra 0$$
by the diagram (\ref{start}).
Taking duals, we obtain
$$0 \ra W \stackrel{q_1^*}{\ra}  p^*V^* \ra \Ext^1(\Shalf, \cO_Y) \ra 0.$$
Thus the composite of $q_1^*$ with $p^*V^* \ra \Ext^1(\Shalf, \cO_Y)$ is zero. So we obtain a map $q': W \ra W^*$ factoring $p^*q: p^*V \ra p^*V^*$. Now $q'$ is an isomorphism: this follows immediately from the short exact sequence (\ref{S/2seq}) and the isomorphism $\Shalf \simeq \Ext^1(\Shalf, \cO_Y)$. 


\end{proof}


Conversely, given $f_T: (W_T,q') \ra p_Y^*\Shalf$, where $p_Y: Y \times T \ra Y$, we take the $\Ker(f_T)$ and restrict the quadratic form to this sub-sheaf. Now the action of the Galois group becomes trivial on the fibers of $\Ker(f_T)$ and this bundle goes down to $X \times T$ along with the restricted quadratic form.

Now in the following proposition we assume that $T$ is a point i.e we check for objects in the category $\cC$. 
\begin{prop} \label{compunitgp} Let $y \in Y$ and $x = p(y)$. Then $\Aut_x(V,q) = \Aut_y^{\ZZ/2}((W,q') \ra \Shalf)$.
\end{prop}
\begin{proof} Firstly the ``half'' sub-module $S_{1/2}$ of $p^*(S)$ is canonically defined. So the inclusion $\Shalf \ra p^*(S)$ provides an inclusion at the stalks at $y$, so pulling back (\ref{autqbun}) to $Y$, we get
\begin{equation} \label{autlocW}
\xymatrix{
0 \ar[r] & p^*(V_x) \ar[r]  \ar[d]_{\pi^*(g)} & W_y^* \ar[r]                                       & {\Shalf}_y \ar[r] & 0 \\
0 \ar[r] & p^*(V_x) \ar[r]                         & W_y^* \ar[u]^{\pi^*(g^*)|_W}   \ar[r]         & {\Shalf}_y  \ar[r]\ar[u]_{\pi^*(\ol{g^*)|_{\Shalf}}} & 0
}
\end{equation}
This gives an element of $\Aut_y(W \ra \Shalf)$. After dualizing (\ref{autlocW}), we obtain the following diagram
\begin{equation}
\xymatrix{
\pi^*(V_x) \ar[r] \ar[d] & W \ar[r]^{q'}             & W^*   \ar[r] \ar[d] & \pi^*(V_x^*) \\
\pi^*(V_x) \ar[r]           & W \ar[r]^{q'} \ar[u]    & W^*  \ar[r]           & \pi^*(V_x^*) \ar[u]
}
\end{equation}
Since the left and the right and the extreme most squares commute, therefore so does the one in the middle as $W$ is a Hecke-modification of $\Gamma^*(V)$. This shows that $\pi^*(g) \in \Aut_y((W,q') \ra \Shalf)$. Its $\ZZ/2$-equivariance is formal.

Conversely, any $\Aut_y^{\ZZ/2}((W,q') \ra \Shalf)$ extends to an automorphism of 
$$ 0 \ra (\Ker(q'), q'|_{\Ker(q')}) \ra (W,q') \ra \Shalf \ra 0. $$
Now since the $\ZZ/2$-action on $W$ is compatible with $\Shalf$, so the restriction to $\Ker(q)$ is trivial along the fibers. Moreover since $g \in \Aut_y^{\ZZ/2}(W,q')$, so its restriction to $q'|_{\Ker}$ also goes down to give a quadratic bundle on $X$.
\end{proof}
It is a formal check that we have a bijection between isomorphisms between $V_1$ and $V_2$ that respect their quadratic structure and $\ZZ/2$-equivariant isomorphisms of $(W_1,q_1')$ and $(W_2, q_2')$ respecting their frame structures. This completes the proof of Theorem \ref{eqcat}.

\end{proof}

\subsection{Equivalence in case $q$ is everwhere degenerate} \label{eqcatdeg}
Now we consider the case of $(V,q)$ where $q$ is everwhere degenerate. We note that $q$ induces a quadratic form $\tilde{q}: V/\ker(q) \ra (V/\ker(q))^*$ which is generically non-degenerate. Note further that $coker(\tilde{q})$ identifies with the torsion part of $coker(q)$, which we denote as $S$ on $X$. As in the last section, by going to a covering, we may form the exact sequence 
\begin{equation} \label{tildeWext}
0 \ra p^*V/\ker(q) \ra \tilde{W} \ra \Shalf \ra 0
\end{equation} such that $\tilde{q}$ extends to $\tilde{q}'$ which is everywhere non-degenerate. 
\begin{prop} \label{degcase} The sequence $0 \ra \ker(q) \ra V \ra V/\ker(q) \ra 0$ embeds into a unique extension of $\tilde{W}$ by $p^*\ker(q)$. 
\end{prop}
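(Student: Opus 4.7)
I would construct $W$ on $Y$ via an extension-class lifting argument on the cover. The target is the commutative diagram
\[
\xymatrix{
0 \ar[r] & p^*\ker(q) \ar[r] \ar@{=}[d] & p^*V \ar[r] \ar@{^{(}->}[d] & p^*(V/\ker(q)) \ar[r] \ar@{^{(}->}[d]^{\iota} & 0 \\
0 \ar[r] & p^*\ker(q) \ar[r] & W \ar[r] & \tilde{W} \ar[r] & 0
}
\]
with $\iota$ the canonical inclusion from (\ref{tildeWext}). Writing $\alpha = [p^*V] \in \Ext^1_Y(p^*(V/\ker(q)), p^*\ker(q))$, the task reduces to finding $\beta \in \Ext^1_Y(\tilde{W}, p^*\ker(q))$ with $\iota^*\beta = \alpha$: indeed the pullback of extensions then automatically recovers $p^*V$ as the fiber product $W \times_{\tilde{W}} p^*(V/\ker(q))$, giving the required embedding together with the commuting squares.

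For existence, I would apply $\Hom_Y(-, p^*\ker(q))$ to (\ref{tildeWext}) to obtain the long exact sequence
\[
\Ext^1_Y(\Shalf, p^*\ker(q)) \to \Ext^1_Y(\tilde{W}, p^*\ker(q)) \xrightarrow{\iota^*} \Ext^1_Y(p^*(V/\ker(q)), p^*\ker(q)) \to \Ext^2_Y(\Shalf, p^*\ker(q)).
\]
Since $Y$ is a smooth projective curve, $\Ext^2$ of coherent sheaves vanishes, so $\iota^*$ is surjective and a lift $\beta$ of $\alpha$ exists, producing the sought extension $W$.

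For uniqueness, the set of lifts is a torsor under $\Ext^1_Y(\Shalf, p^*\ker(q))$, which is generically nonzero, so uniqueness is not automatic from the Ext vanishing. I would recover it by remembering the full embedding data: reading $W$ instead as an extension of $\Shalf$ by $p^*V$, its class in $\Ext^1_Y(\Shalf, p^*V)$ is characterized as the unique class that pushes forward along $p^*V \twoheadrightarrow p^*(V/\ker(q))$ to the prescribed class $[\tilde{W}] \in \Ext^1_Y(\Shalf, p^*(V/\ker(q)))$ while being compatible with the fixed subobject $p^*\ker(q) \subset p^*V$. A $3\times 3$ diagram chase identifies the two descriptions of $W$ (as an extension of $\tilde{W}$ by $p^*\ker(q)$ and as an extension of $\Shalf$ by $p^*V$) and shows that once the embedding is fixed, the isomorphism class of $W$ is determined.

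\textbf{Main obstacle.} Existence is a one-line consequence of $\Ext^2 = 0$ on the curve, but uniqueness is the delicate point, since the naive Ext lifting is not injective. The content of the proof is to show that the embedding data $p^*V \hookrightarrow W$ — together with the identification $W/p^*\ker(q) = \tilde{W}$ and the preassigned subobject filtration — kills exactly the ambiguity $\Ext^1_Y(\Shalf, p^*\ker(q))$. Unpacking this compatibility via the comparison of $\Ext^1_Y(\Shalf, p^*\ker(q)) \hookrightarrow \Ext^1_Y(\Shalf, p^*V)$ with the restriction map into $\Ext^1_Y(\tilde{W}, p^*\ker(q))$ is the bookkeeping step where the work lies.
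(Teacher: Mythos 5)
Your existence argument is the same as the paper's: apply $\Hom_Y(-,p^*\ker(q))$ to (\ref{tildeWext}) and use $\Ext^2_Y(\Shalf,p^*\ker(q))=0$ on a curve to surject onto $\Ext^1_Y(p^*(V/\ker(q)),p^*\ker(q))$. The gap is in uniqueness. The paper does not treat the set of lifts as a genuinely nontrivial torsor and then cut it down by auxiliary data; it claims outright that the map $\Ext^1_Y(\Shalf,p^*\ker(q))\ra \Ext^1_Y(\tilde{W},p^*\ker(q))$ has zero image (its argument: the source is torsion, supported on $\supp(\Shalf)$, while the target is torsion free), so that $\iota^*$ is an \emph{isomorphism} and the lift $\beta$ is unique as an extension class. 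You assert instead that the ambiguity group is ``generically nonzero'' and try to recover uniqueness from the embedding data, but the substitute you offer does not close the gap: the fiber of the pushforward $\Ext^1_Y(\Shalf,p^*V)\ra \Ext^1_Y(\Shalf,p^*(V/\ker(q)))$ over $[\tilde{W}]$ is a torsor under the image of the very same group $\Ext^1_Y(\Shalf,p^*\ker(q))$, now mapping into $\Ext^1_Y(\Shalf,p^*V)$, and ``compatibility with the fixed subobject $p^*\ker(q)\subset p^*V$'' is automatic for every class in that fiber, so it imposes no further constraint. The advertised $3\times 3$ diagram chase is never carried out, and as described it would only re-identify the two presentations of $W$, not select one lift among the torsor. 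So as written your proof establishes existence but not uniqueness.

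To repair it you must engage with the vanishing of the composite $\Ext^1_Y(\Shalf,p^*\ker(q))\ra \Ext^1_Y(\tilde{W},p^*\ker(q))$ (equivalently, the surjectivity of $\Hom_Y(p^*(V/\ker(q)),p^*\ker(q))\ra \Ext^1_Y(\Shalf,p^*\ker(q))$ in the long exact sequence), which is the entire content of the uniqueness assertion. Note also that the paper's own justification is terse --- for global $\Ext$ groups on a curve both sides are finite-dimensional vector spaces, so ``torsion versus torsion free'' needs unpacking (e.g.\ via the local-to-global spectral sequence, using that $\Ext^1_Y(\Shalf,p^*\ker(q))=H^0(\mathcal{E}xt^1_Y(\Shalf,p^*\ker(q)))$ while $\mathcal{E}xt^1_Y(\tilde{W},p^*\ker(q))=0$); if you disagree with that step you should say so explicitly and supply an alternative, rather than deferring to an unspecified diagram chase.
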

\begin{proof}
Applying the functor $\Hom_Y(?,p^*\ker(q))$ to the sequence (\ref{tildeWext}) we get
\begin{eqnarray*}
\cdots \ra \Ext^1_Y(\Shalf,p^*\ker(q)) \ra \Ext^1_Y(\tilde{W},\ker(q)) \ra \\ \Ext^1_Y(p^*V/\ker(q), \ker(q)) \ra \Ext^2_Y(\Shalf, p^*\ker(q)) \ra \cdots
\end{eqnarray*}

But $\Ext^2$ vanishes since we are on a curve and the image of $\Ext^1$ is zero because it is torsion, being supported on the support of $\Shalf$, while $\Ext^1_Y(\tilde{W},\ker(q))$ is torsion free. Hence we obtain an isomorphism
\begin{equation}
\Ext^1_Y(\tilde{W},\ker(q)) \ra \Ext^1_Y(p^*V/\ker(q), \ker(q)),
\end{equation}
using which, for $0 \ra p^* \ker(q) \ra p^* V \ra p^* V/\ker(q) \ra 0$, we may find the unique extension $0 \ra p^* \ker(q) \ra W \ra \tilde{W} \ra 0$ of which it is the pull-back.
\end{proof}
We denote the middle term of the extension of $\tilde{W}$ suggestively as $W$.
As in the proof of Theorem \ref{eqcat}, the quadratic form $\tilde{q}$ extends to an everywhere non-degenerate quadratic form $\tilde{q}'$ on $\tilde{W}$. Then $\tilde{q}'$  may be used to endow $W$ with a quadratic form $q'$.

We may relax slightly the conditions for the category $\cC$ as follows.
\begin{enumerate}
\item  $q: V \ra V^*$ is just a {\it quadratic form}
\item  the {\it torsion part} of the quotient $V^*/q(V)$ is a sky-scraper sheaf denoted $S$ on $X$
\item $\det((V/\ker(q))^*)^2=L$
\item the natural map $\det(q): \det(V/\ker(q)) \ra \det((V/\ker(q))^*)$ comes from the section $s \in \Gamma(X,L)$.
\end{enumerate}

Similarly we may relax the conditions on the categroy $C$ by demanding only that $(W,q')$ is a quadractic bundle where $\deg(W)=0$ and $q'$ is simply a quadratic form.

Then from Prop \ref{degcase} and Theorem \ref{eqcat}, it follows that we will again have an isomorphism of the related functors. Further since $p^*\ker(q)=\ker(q')$, so we may further put the condition that $\deg(\ker(q))=0$ on objects $(V,q)$ of $\cC$ and $\deg(\ker(q'))=0$ on objects $(f:W \ra \Shalf,q')$ of $C$. Lastly note that the condition $(3)$ and $(4)$ of $\cC$ may further be relaxed, if we just want to fix the degeneracy locus and type, in the extended sense as in the introduction.

\section{Coarse moduli of $\Gamma$-Framed modules}
In this section (except the last subsection) we suppose more generally that $p: Y \ra X$ is a Galois cover of smooth projective schemes with Galois group $\Gamma$. Our purpose is only to state the generalizations of \cite{hl} to an equivariant setup. Most of the proofs are straightforward, so we omit them. 

Let $D$ be a $\Gamma$-invariant coherent $\cO_Y$-module and $\delta \in \QQ[X]$ a polynomial with positive leading coefficient.

\begin{defi} \label{framedmoduledef} A $\Gamma$-framed module is a pair consisting of a coherent $\cO_Y$-module $E$ invariant under $\Gamma$-action and a $\Gamma$-invariant homomorphism $\alpha: E \ra D$, called the framing of $E$. One calls $\ker(\alpha)$ as the kernel of the $\Gamma$-framed module $(E,\alpha)$. We put $\epsilon(\alpha)=1$ if $\alpha \neq 0$ and zero otherwise.  
\end{defi}

We shall denote $P_E(n) = \chi (E(n))$ the Hilbert polynomial of $E$ and by $P_{(E, \alpha)}= P_E - \epsilon(\alpha) \delta$ the Hilbert polynomial  of the pair $(E,\alpha)$. We put $h^0((E,\alpha)(m)) = h^0(E(m)) - \epsilon(\alpha) \delta(m)$.
 
Following \cite{hl}, we recall that if $E'$ is a coherent $\Gamma$-submodule of $E$ with quotient $E'' = E/E'$, then a framing $\alpha: E \ra D$ induces framings $\alpha'= \alpha|_{E'}$ and $\alpha''$ on $E''$ as follows: $\alpha''$ is zero if $\alpha' \neq 0$ and it is the induced homomorphism on $E''$ if $\alpha'$ is zero.

\begin{defi} A $\Gamma$-invariant homomorphism $\phi: (E,\alpha) \ra (E', \alpha')$ of $\Gamma$-framed modules is a $\Gamma$-homomorphism of underlying modules $\phi: E \ra E'$ for which there is an element $\lambda \in \CC$ such that $\alpha' \circ \phi_0 = \alpha$.
\end{defi}

\begin{defi} \label{mainss} A $\Gamma$-framed module $(E,\alpha)$ of rank $r$ is said to be (semi)stable with respect to $\delta$ with reduced Hilbert polynomial $p$, if $P_{(E,\alpha)}=r p$ and for all $\Gamma$-sub-modules $E'$ where $0 \neq E' \neq E$, of rank $r'$ induced framing $\alpha'$, the following inequality holds 
$$P_{(E', \alpha')} (\leq) r' p.$$
\end{defi}

\subsection{Semistable $\Gamma$-framed modules}
Given a flat $\Gamma$-family of framed modules $(E_T , \alpha_T: E_T \ra D_T)$, we can choose a fixed locally free coherent $\Gamma$-module $\tilde{D_T}$ and a surjective $\Gamma$-equivariant homomorphism $\phi: \tilde{D_T} \ra D_T$ with kernel $B$. Then following \cite{hl}, to $(E_T, \alpha_T)$ we associate $(\tilde{E_T}, \tilde{\alpha_T})$ by pulling back:
\begin{equation}
 \xymatrix{
          &                                       &  0  & 0 & \\
0 \ar[r]  & \ker(\alpha_T) \ar[r] \ar[d]^{\simeq} & E_T \ar[u] \ar[r]^{\alpha_T}                 & D_T \ar[r] \ar[u] & 0 \\
0 \ar[r]  & \ker(\alpha_T) \ar[r]                 & \tilde{E_T} \ar[r]^{\tilde{\alpha_T}} \ar[u] & \tilde{D_T} \ar[r] \ar[u] & 0 \\
          &                                       & B_T \ar[u] \ar[r]^{\simeq}                    & B_T \ar[u] &  \\
          &                                       & 0      \ar[u]                                  & 0 \ar[u]   &             
}
\end{equation}
The second row shows that $\tilde{E_T}$ is torsion free if the kernel of $\alpha$ is torsion free. This happens in particular if $(E,\alpha)$ is $\Gamma$-$\mu$-semistable.
\begin{lem} \label{1.11}
 If $(E, \alpha)$ is a $\Gamma$-framed module that can be deformed $\Gamma$-equivariantly to a framed module with torsion free kernel, then there is a morphism $\phi: (E, \alpha) \ra (G,\beta)$ of $\Gamma$-framed modules , such that
\begin{enumerate}
 \item $(G,\beta)$ has torsion free kernel.
\item $P_E=P_G$.
\item $P_{(E,\alpha)}=P_{(G,\beta)}$.
\item $\ker(\phi) = T(\ker(\phi))$.
\end{enumerate}

\end{lem}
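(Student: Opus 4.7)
The plan is to carry out the Huybrechts--Lehn elementary modification in the $\Gamma$-equivariant setting, using the given deformation to absorb the torsion of $\ker(\alpha)$ while preserving the Hilbert polynomial. Since the lemma is stated as the direct $\Gamma$-equivariant analogue of the corresponding statement in \cite{hl}, the work amounts to checking that the Huybrechts--Lehn construction respects the $\Gamma$-action at every stage.

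Let $T := T(\ker\alpha) \subseteq \ker(\alpha) \subseteq E$ denote the torsion subsheaf of $\ker(\alpha)$; it is $\Gamma$-invariant because the formation of the torsion subsheaf is canonical and $\ker(\alpha)$ is $\Gamma$-invariant. Fix a $\Gamma$-equivariant flat family $(\mathcal{E}_R, \mathcal{A}_R: \mathcal{E}_R \to \mathcal{D}_R)$ over a DVR $R$ with uniformizer $\sigma$ and residue field $k$, whose closed fiber is $(E,\alpha)$ and whose generic fiber has torsion-free kernel. Write $i: X \hookrightarrow X \times \Spec R$ for the inclusion of the closed fiber.

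The key construction is the elementary modification
\begin{equation*}
0 \to \mathcal{E}_R \to \mathcal{E}_R^+ \to i_*T \to 0,
\end{equation*}
whose class in $\Ext^1_{X \times \Spec R}(i_*T, \mathcal{E}_R)$ is the pullback of the class of the universal extension $0 \to \mathcal{E}_R \xrightarrow{\cdot\sigma} \mathcal{E}_R \to i_*E \to 0$ along the inclusion $i_*T \hookrightarrow i_*E$. A $\Tor$ computation shows $\mathcal{E}_R^+$ is $R$-flat, because the boundary map $T \to E$ arising when one tensors with $k$ is the inclusion of $T$ in $E$, hence injective. Tensoring the displayed sequence with $k$ then produces
\begin{equation*}
0 \to T \to E \xrightarrow{\phi} G \to T \to 0, \qquad G := \mathcal{E}_R^+ \otimes_R k,
\end{equation*}
yielding properties (2) and (4) at once. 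Since $T \subseteq \ker\alpha$, the quotient $\mathcal{E}_R^+/\mathcal{E}_R \simeq i_*T$ is annihilated by any extension of $\mathcal{A}_R$, and a direct check shows that $\mathcal{A}_R$ does extend uniquely to a $\Gamma$-equivariant framing $\mathcal{A}_R^+: \mathcal{E}_R^+ \to \mathcal{D}_R$, whose restriction to the closed fiber is the desired $\beta: G \to D$ with $\epsilon(\beta) = \epsilon(\alpha)$, giving (3). $\Gamma$-equivariance of $\phi$ is preserved because every ingredient ($T$, the extension class, $\mathcal{E}_R^+$, and the extended framing) is canonical.

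The principal obstacle is verifying (1), that $\ker(\beta)$ is torsion-free; here the deformation hypothesis plays its essential role. The generic fiber $\ker(\mathcal{A}_R^+ \otimes \Frac R)$ equals $\ker(\mathcal{A}_R \otimes \Frac R)$, which is torsion-free by hypothesis. One shows that $\ker(\mathcal{A}_R^+)$ is itself $R$-flat---this requires $\mathrm{im}(\mathcal{A}_R^+)$ to be $R$-flat, which is arranged by our choice of extension class---and then that its closed fiber $\ker(\beta)$ fits in a short exact sequence $0 \to \ker(\alpha)/T \to \ker(\beta) \to T \to 0$ classified by a specific nonsplit element of $\Ext^1(T, \ker(\alpha)/T)$ coming from the deformation. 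Had we taken the split extension (as would be forced in the absence of a deformation), $T$ would reappear as a torsion subsheaf of $\ker(\beta)$; the content of the hypothesis is that the extension class can and must be chosen so nonsplit that the middle term is torsion-free, whence (1).
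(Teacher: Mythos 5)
The paper itself omits the proof of this lemma, deferring to the corresponding statement in Huybrechts--Lehn, so your proposal has to be measured against that intended argument. Your skeleton is the right one and matches it: the elementary modification of the family along $i_*T$ with $T=T(\ker\alpha)$, the $\Tor$ computation showing the boundary map is the inclusion $T\hra E$ (hence flatness of $\mathcal{E}_R^+$ and the four-term sequence $0\ra T\ra E\ra G\ra T\ra 0$), the unique extension of the framing using $T\subseteq\ker\alpha$ and $\sigma$-torsion-freeness of $\mathcal{D}_R$, and the observation that everything is canonical hence $\Gamma$-equivariant. This correctly delivers (2), (3), (4) and equivariance. The gap is in (1), and it is genuine: a \emph{single} modification along $T(\ker\alpha)$ does not in general produce a torsion-free kernel. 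Concretely, let $q\in X$ with local parameter $t$, let $R$ be a DVR with uniformizer $\sigma$, let $\mathcal{E}_R\subset \cO_X(q)\boxtimes R$ be the subsheaf equal to the ideal $(\sigma^2,t)$ near $q$ and to everything elsewhere, framed by evaluation at a point $p\neq q$. This is $R$-flat, its generic fibre is $(\cO(q),\mathrm{ev}_p)$ with torsion-free kernel $\cO(q-p)$, and its special fibre is $(\cO\oplus k_q,(\mathrm{ev}_p,0))$, so $T=k_q$. The preimage of $T$ in $\mathcal{E}_R$ is, near $q$, the ideal $(\sigma^2,\sigma t)=\sigma\cdot(\sigma,t)$, whose special fibre is again $\cO\oplus k_q$; hence your $\ker(\beta)=\cO(-p)\oplus k_q$ still has torsion, and a second modification is needed before the torsion disappears.

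The two justifications you offer for (1) do not survive this example. First, $R$-flatness of $\ker(\mathcal{A}_R^+)$ together with torsion-freeness of its generic fibre says nothing about torsion in its special fibre --- an $R$-flat sheaf with locally free generic fibre and torsion in the closed fibre (e.g. the ideal $(\sigma,t)$) is exactly the phenomenon the lemma is meant to handle, and in any case the closed fibre of $\ker(\mathcal{A}_R^+)$ need not coincide with $\ker(\beta)$. Second, the extension $0\ra\ker(\alpha)/T\ra\ker(\beta)\ra T\ra 0$ is not at your disposal to ``choose nonsplit'': its class is already determined by the pullback class you fixed at the outset, it can perfectly well be split (as above), and even a nonsplit extension of a torsion sheaf by a locally free one can have torsion in the middle term. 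The missing idea is the Langton-type descending induction: iterate $\mathcal{E}^{(i+1)}=\ker\bigl(\mathcal{E}^{(i)}\ra i_*(E^{(i)}/T(\ker\alpha^{(i)}))\bigr)$ and prove the process terminates; this termination argument is the actual content of the Huybrechts--Lehn lemma, and it also explains why the conclusion is only that $\ker(\phi)$ is torsion (an iterated extension of the successive $T^{(i)}$) rather than that it equals $T(\ker\alpha)$.
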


\begin{lem} \label{1.6} If $(E, \alpha)$ and $(E', \alpha')$ are $\Gamma$-stable with the same reduced Hilbert polynomial $p$, then any non-trivial $\Gamma$-homomorphism $\phi: (E,\alpha) \ra (E',\alpha')$ is an isomorphism. Moreover we have $$\Hom((E,\alpha), (E',\alpha')) \simeq \CC.$$ If $\alpha \neq 0$, then there is a unique isomorphism $\phi_0$ with $\alpha' \circ \phi_0 = \alpha$.
\end{lem}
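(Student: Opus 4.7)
The statement is a Schur-type lemma for $\Gamma$-stable framed modules; the argument adapts the classical Schur lemma to the framed $\Gamma$-equivariant setting. The plan is to show in sequence that $\phi$ is injective, that $\phi$ is surjective, and that $\Hom$ is one-dimensional over $\CC$.

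For injectivity, suppose $K = \Ker(\phi)$ satisfies $0 \neq K \subsetneq E$ and set $I = \img(\phi)$. Apply stability of $(E,\alpha)$ to $K$ (strict inequality) and of $(E',\alpha')$ to $I$ (strict if $I \subsetneq E'$, equality if $I = E'$). Using $E/K \simeq I$, so that $P_E = P_K + P_I$ and $\rank(E) = \rank(K) + \rank(I)$, the weighted sum of the two stability inequalities reduces, after cancelling $P_E$ and using $p = (P_E - \epsilon(\alpha)\delta)/\rank(E)$, to
$$\bigl[\epsilon(\alpha|_K) + \epsilon(\alpha'|_I)\bigr]\,\delta \;>\; \epsilon(\alpha)\,\delta.$$
Now one enumerates cases using $\alpha' \circ \phi = \lambda\,\alpha$. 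If $\alpha|_K \neq 0$, then $\lambda\,\alpha|_K = \alpha' \circ \phi|_K = 0$ forces $\lambda = 0$, hence $\alpha'|_I = 0$, and the left-hand side equals $\epsilon(\alpha)\,\delta$. If $\alpha = 0$ all three $\epsilon$-values vanish. If $\alpha|_K = 0$ with $\alpha \neq 0$, then $\lambda \neq 0$ yields a sum equal to $\epsilon(\alpha)\,\delta$, while $\lambda = 0$ gives sum zero. Every case contradicts the strict inequality above, so $K = 0$.

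For surjectivity, since $\phi$ is injective, $P_{\img(\phi)} = P_E$ and $\rank(\img(\phi)) = \rank(E)$. If $\img(\phi) \subsetneq E'$, stability of $(E',\alpha')$ applied to $\img(\phi)$ rearranges to $\epsilon(\alpha'|_{\img(\phi)}) > \epsilon(\alpha)$, which is incompatible with the framing relation (either $\lambda = 0$ and both $\epsilon$-values vanish, or $\lambda \neq 0$ and both equal $1$). For the one-dimensional $\Hom$ statement, let $\phi,\psi$ be two nonzero morphisms with associated scalars $\lambda_\phi, \lambda_\psi$. If $\alpha = 0$, then $\alpha' \circ \phi = 0$ combined with surjectivity of $\phi$ forces $\alpha' = 0$, and Schur's lemma for $\Gamma$-stable sheaves suffices. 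Otherwise, matching of reduced Hilbert polynomials together with $P_E = P_{E'}$ (from $\phi$ being an isomorphism) forces $\alpha' \neq 0$, and then $\lambda_\phi, \lambda_\psi$ must be nonzero. The combination $\phi' = \psi - (\lambda_\psi/\lambda_\phi)\phi$ then satisfies $\alpha' \circ \phi' = 0$; if $\phi' \neq 0$, the previous two steps make it an isomorphism, whence $\alpha' = 0$, a contradiction. So $\psi$ is a scalar multiple of $\phi$, and when $\alpha \neq 0$ the normalisation $\phi_0 = \lambda_\phi^{-1}\phi$ yields the unique isomorphism with $\alpha' \circ \phi_0 = \alpha$, uniqueness being forced because any other $\psi_0$ with $\alpha' \circ \psi_0 = \alpha$ is a scalar multiple of $\phi_0$ whose scalar must equal $1$.

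The main obstacle is the case analysis in the injectivity step: the three quantities $\epsilon(\alpha|_K)$, $\epsilon(\alpha'|_I)$ and $\epsilon(\alpha)$ depend subtly on whether the scalar $\lambda$ vanishes, and one must verify that every combination consistent with $\alpha' \circ \phi = \lambda\,\alpha$ is obstructed by the combined stability inequality. The $\Gamma$-equivariance plays only a passive role, restricting the stability test to $\Gamma$-invariant submodules.
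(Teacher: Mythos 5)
Your argument is correct and is essentially the standard Huybrechts--Lehn proof that the paper invokes without writing out (the paper explicitly omits these proofs as straightforward $\Gamma$-equivariant generalizations of [HL]): combine the two stability inequalities for $\ker\phi$ and $\img\phi$ to isolate the $\epsilon$-terms, rule out every case via $\alpha'\circ\phi=\lambda\alpha$, and obtain one-dimensionality of $\Hom$ by showing the framing-killing combination $\psi-(\lambda_\psi/\lambda_\phi)\phi$ must vanish. The only blemish is the parenthetical in the surjectivity step --- when $\lambda=0$ and $\alpha\neq 0$ the two $\epsilon$-values are $0$ and $1$ rather than both vanishing, but the required inequality $\epsilon(\alpha'|_{\img\phi})>\epsilon(\alpha)$ is still violated, so the contradiction stands.
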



The filtration in the following proposition is the `Jordan-Holder Filtration' in the case of $\Gamma$-framed modules.

\begin{prop} \label{jh} Let $(E,\alpha)$ be a $\Gamma$-semistable framed module with reduced Hilbert polynomial $p$. Then there is a filtration
$$E_\bullet : 0 = E_0 \subset E_1 \subset \cdots \subset E_s =E $$
such that all the factors $gr_i(E_\bullet) = E_i/E_{i-1}$ together with induced framings $\alpha_i$ are $\Gamma$-stable with respect to $\delta$ with reduced Hilbert polynomial $\delta$. The $\Gamma$-framed module $(gr(E), gr(\alpha)) = \oplus_i (gr_i(E),\alpha_i)$ is independent of the choice of the Jordan-Holder filtration upto permutation of factors.
\end{prop}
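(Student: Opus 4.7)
The plan is to adapt the classical Jordan--Hölder argument for semistable sheaves (as presented in \cite{hl}, Section~1.5) to the $\Gamma$-equivariant framed setting, leaning on Lemma~\ref{1.6} for uniqueness.

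For existence, I would induct on the rank of $E$. If $(E,\alpha)$ is already $\Gamma$-stable, take the trivial filtration $0 \subset E$. Otherwise, consider the collection $\Sigma$ of nonzero proper $\Gamma$-submodules $E' \subset E$ whose induced framed objects $(E',\alpha')$ satisfy the equality $P_{(E',\alpha')} = r' p$. Semistability of $(E,\alpha)$ together with failure of stability forces $\Sigma \neq \emptyset$. Using the Noetherian property of the category of $\Gamma$-coherent sheaves on $Y$, pick $E_1 \in \Sigma$ of minimal rank. Then $(E_1,\alpha_1)$ must be $\Gamma$-stable: any nonzero proper $\Gamma$-submodule $E'' \subset E_1$ would, by $\Gamma$-semistability of $E$, satisfy $P_{(E'',\alpha'')} \le r'' p$, and equality would contradict minimality of $E_1$ in $\Sigma$. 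Next, additivity of Hilbert polynomials and the definition of the induced framing on $E/E_1$ show that $(E/E_1,\bar\alpha)$ is again $\Gamma$-semistable with the same reduced Hilbert polynomial $p$, so the inductive hypothesis produces a JH filtration of $(E/E_1,\bar\alpha)$ whose pullback to $E$ extends $0 \subset E_1$ to the desired filtration.

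For uniqueness up to permutation, suppose $E_\bullet$ and $E'_\bullet$ are two JH filtrations of $(E,\alpha)$. I would compare the first stable factor $E_1$ of $E_\bullet$ with factors of $E'_\bullet$ by considering, for each $j$, the composition
\begin{equation*}
\varphi_j : E_1 \hookrightarrow E \twoheadrightarrow E/E'_{j-1} \twoheadrightarrow \mathrm{gr}_j(E'_\bullet).
\end{equation*}
Let $j_0$ be the least index with $\varphi_{j_0} \neq 0$; then $\varphi_{j_0}$ is a nonzero $\Gamma$-homomorphism between two $\Gamma$-stable framed modules with the same reduced Hilbert polynomial $p$ (one must check that $\varphi_{j_0}$ is a morphism of framed modules, i.e.\ that it respects the framings up to the scalar $\lambda$ allowed in the definition; this is forced by the construction and the fact that at most one stable factor carries a nonzero induced framing when $\alpha \neq 0$). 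Lemma~\ref{1.6} then upgrades $\varphi_{j_0}$ to an isomorphism $E_1 \simeq \mathrm{gr}_{j_0}(E'_\bullet)$. Passing to the quotient by $E_1$ and by the image of $E_1$ in $E'_\bullet$ (using that $E_1 \cap E'_{j_0-1} = 0$ and $E_1 + E'_{j_0-1} / E'_{j_0-1} = \mathrm{gr}_{j_0}(E'_\bullet)$ from the isomorphism), one obtains two JH filtrations of $(E/E_1, \bar\alpha)$ of length one shorter, and induction concludes.

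The main obstacle will be the careful bookkeeping of the framing. Because $\epsilon(\alpha')$ is discontinuous in $\alpha'$ (it jumps between $0$ and $1$), the polynomial $P_{(E',\alpha')}$ depends on whether the restriction of $\alpha$ to $E'$ vanishes. In a JH filtration, precisely one index $i_0$ has $\alpha_{i_0} \neq 0$ (when $\alpha \neq 0$), and the matching of factors under the two filtrations must send this distinguished index in $E_\bullet$ to the corresponding distinguished index in $E'_\bullet$. Making this compatible with the inductive step — in particular verifying that the induced framing on $E/E_1$ lands on the expected factor — is where the most delicate argument sits. The $\Gamma$-equivariance, by contrast, is purely formal: every subobject, quotient, and morphism in the argument is taken in the category of $\Gamma$-coherent sheaves, and no extra subtlety arises beyond requiring the subobjects exhibiting (de)stabilization to be $\Gamma$-invariant, which is already built into Definition~\ref{mainss}.
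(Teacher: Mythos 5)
The paper does not actually supply a proof of Proposition \ref{jh}: the section explicitly states that the generalizations of \cite{hl} to the equivariant setup are only being \emph{stated}, with proofs omitted as straightforward adaptations of Huybrechts--Lehn. So your proposal can only be measured against the standard argument it is adapting, and there it has two genuine gaps.

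First, in the existence step, taking $E_1\in\Sigma$ of \emph{minimal rank} does not force $(E_1,\alpha_1)$ to be stable. A proper $\Gamma$-submodule $E''\subsetneq E_1$ of the \emph{same} rank can still satisfy $P_{(E'',\alpha'')}=r''p$: this happens exactly when $\epsilon(\alpha_1)=1$, $\epsilon(\alpha'')=0$ and $P_{E_1/E''}=\delta$, and your appeal to ``minimality of $E_1$ in $\Sigma$'' says nothing about such $E''$. This is not a pathological case in this paper --- for $\delta=1$ on a curve, $E''=\ker(\alpha|_{E_1})$ realizes it whenever $\alpha|_{E_1}$ has length-one image (compare the paper's remark that $\ker(f)$ is always destabilizing for $\delta=1$). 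The fix is standard but must be said: among the minimal-rank members of $\Sigma$ choose one with minimal Hilbert polynomial $P_{E_1}$ (equivalently, minimal with respect to inclusion, or with $\epsilon(\alpha_1)=0$ whenever possible); then both cases $r''<r_1$ and $r''=r_1$ lead to contradictions and $(E_1,\alpha_1)$ is stable.

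Second, in the uniqueness step the map $\varphi_j$ as you write it does not exist: $\mathrm{gr}_j(E'_\bullet)=E'_j/E'_{j-1}$ is a \emph{sub}object of $E/E'_{j-1}$, not a quotient of it, so there is no surjection $E/E'_{j-1}\twoheadrightarrow \mathrm{gr}_j(E'_\bullet)$ to compose with. The correct device is to let $j_0$ be minimal with $E_1\subseteq E'_{j_0}$ and use the composite $E_1\hookrightarrow E'_{j_0}\twoheadrightarrow E'_{j_0}/E'_{j_0-1}$, which is nonzero by minimality of $j_0$. Moreover, before Lemma \ref{1.6} can be invoked you must verify that this composite is a morphism of \emph{framed} modules; you flag this as ``forced by the construction,'' but it is precisely where the discontinuity of $\epsilon$ bites (e.g.\ if $\alpha|_{E_1}=0$ while the induced framing on $\mathrm{gr}_{j_0}(E'_\bullet)$ is nonzero, compatibility requires the image of $E_1$ to land in the kernel of that framing, which is not automatic and must be argued from the fact that only one graded piece of each filtration carries a nonzero framing). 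These are the two places where your write-up would fail as stated; the $\Gamma$-equivariance is indeed, as you say, formal.
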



\begin{defi} Two $\Gamma$-semistable framed modules $(E,\alpha)$ and $(E', \alpha')$ with reduced Hilbert polynomial $p$ are called $S$-equivalent, if their associated graded objects $(gr(E), gr(\alpha))$ and $(gr(E'), gr(\alpha'))$ are isomorphic.
\end{defi}

We remark that  $\Gamma$-stable framed module form $S$-equivalence class by themselves.

\begin{rem} Notice that at most one of the framings in $gr(\alpha)$ non-zero.
\end{rem}

\subsection{Equivalent conditions of $\Gamma$-semi-stability}
In this section we wish to state a technical results (Theorem \ref{2.1}) which logically corresponds to Theorem 2.1 of \cite[]{hl}.
The proofs in Section 2 Boundedness of \cite[]{hl} generalize in a straight forward way. So we only state the results.

Amongst the families of $\Gamma$-framed modules having torsion free kernel and satisfying the the following condition:
\begin{enumerate}
 \item we denote by $S^s$ those that are stable,
\item we denote by $S'_m$ those $(E, \alpha)$ satisfying $P(m) \leq h^0((E,\alpha)(m))$ and the inequality $h^0((E', \alpha')(m)) < r'/r P(m)$ for all $\Gamma$-sub-modules $(E',\alpha')$ of rank $r'$, where $0 \neq E' \neq E$.
\item we denote by $S''_m$ those $(E,\alpha)$ such that $h^0((E'', \alpha'')(m)) > r''/r P(m)$ for all quotient $\Gamma$-modules $(E'', \alpha'')$ of rank $r''$ where $E \neq E'' \neq 0$.
\end{enumerate}

The lemma below is an ingredient in proving Theorem \ref{2.1}.
\begin{lem} \label{2.4} There are integers $C$ and $m_1$ such that for all framed modules $(E,\alpha)$ in the family $S= S^s \cup \cup_{m \geq m_1} S''_m$ and for all saturated $\Gamma$-sub-modules  $(E', \alpha')$  the following holds:
$\deg (E') - r' \mu_P \leq C$ and either $-C \leq \deg(E') - r' \mu_P$  or
\begin{enumerate}
 \item $h^0((E',\alpha')(m)) < r'/r P(m)$ if $(E,\alpha) \in S^s$ and $m \geq m_1$
\item $r''/r P < P_{(E'',\alpha'')}$ if $(E,\alpha) \in S''_m$ for some $m \geq m_1$.
\end{enumerate}
Here $r'$ and $r''$ denote the ranks of $E'$ and $E''$ respectively. 
 \end{lem}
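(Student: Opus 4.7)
The plan is to adapt the argument of Lemma 2.4 of \cite{hl} to the $\Gamma$-equivariant framed-module setting, using two classical inputs: Grothendieck's boundedness lemma and the Le Potier--Simpson estimate.

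First I would establish the upper bound $\deg(E') - r'\mu_P \le C$. Since every $E$ underlying a framed module in $S$ has the same Hilbert polynomial $P$ and torsion-free kernel (preserved under deformation by Lemma~\ref{1.11}), the underlying family of sheaves is bounded. Grothendieck's lemma, applied $\Gamma$-equivariantly by passing to the $\Gamma$-fixed locus of the appropriate Quot-scheme, then produces a uniform constant $C$ such that every saturated $\Gamma$-submodule $E'$ of any such $E$ satisfies $\mu(E') \le \mu_P + C/r'$.

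Next, suppose the lower bound fails, so that $\deg(E') - r'\mu_P < -C$ for $C$ taken very large. I would apply the Le Potier--Simpson estimate to $E'$: since the slopes of the Harder--Narasimhan factors of $E'$ are bounded above by the upper bound of the first step, while the total degree $\deg(E')$ is very negative, the estimate forces
$$h^0(E'(m)) < \frac{r'}{r}\, P(m) \qquad \text{for all } m \ge m_1,$$
where $m_1$ is chosen uniformly using Castelnuovo--Mumford regularity for the bounded equivariant family of such saturated $\Gamma$-subsheaves. Enlarging $m_1$ so that $P(m)$ dominates $\delta(m)$ with a suitable margin absorbs the correction $\epsilon(\alpha')\delta(m)$, settling case (1).

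For case (2), I would pass to the quotient $0 \to E' \to E \to E'' \to 0$. Choosing $m \ge m_1$ so that $h^1(E'(m)) = 0$ (again by Castelnuovo--Mumford), additivity of Euler characteristic together with the above bound on $h^0(E'(m))$ yields
$$h^0(E''(m)) > \frac{r''}{r}\, P(m),$$
and since this strict inequality persists for all sufficiently large $m$, it upgrades to the polynomial inequality $\frac{r''}{r}\, P < P_{(E'',\alpha'')}$.

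The main obstacle is verifying that $C$ and $m_1$ can be chosen uniformly over all of $S = S^s \cup \bigcup_{m \ge m_1} S''_m$ and over all saturated $\Gamma$-submodules $E'$ simultaneously. This amounts to showing that the collection of such $E'$ with bounded $\mu_{\max}$ forms a single bounded $\Gamma$-equivariant family, which I would handle by taking $\Gamma$-fixed loci in the relevant Quot-schemes and invoking $\Gamma$-equivariant versions of standard boundedness statements; once this is established, uniform Castelnuovo--Mumford regularity delivers a uniform $m_1$ and the Le Potier--Simpson estimate becomes applicable throughout $S$.
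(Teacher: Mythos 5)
The paper offers no argument of its own here: it declares that the proofs of Section~2 of \cite{hl} ``generalize in a straightforward way'' and omits them, so the implicit benchmark is the Huybrechts--Lehn proof, which is also what you set out to reproduce. Your overall strategy (Grothendieck's lemma plus the Le Potier--Simpson estimate, transported to the $\Gamma$-equivariant setting by passing to fixed loci) is the right one, but your opening step is circular. You assert that the sheaves $E$ underlying the members of $S$ form a bounded family because they share the Hilbert polynomial $P$ and have torsion-free kernel, and you extract the uniform constant $C$ from that boundedness. A fixed Hilbert polynomial does not imply boundedness (already on a curve, $\cO_X(n)\oplus\cO_X(-n)$ for $n\to\infty$ is a counterexample), and the boundedness of $S$ is precisely what this lemma and Theorem~\ref{2.1} are designed to establish, not an available input. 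The first assertion, $\deg(E')-r'\mu_P\le C$, is equivalent to a uniform upper bound on $\mu_{\max}(E)$ over all of $S$, and it must be derived from the defining numerical conditions of $S^s$ and $S''_m$: for $(E,\alpha)\in S^s$ it falls out of the stability inequality applied to $E'$ itself, while for $(E,\alpha)\in S''_m$ one has to play the hypothesis on quotients off against the Le Potier--Simpson bound applied to the quotient of $E$ by its maximal destabilizing subsheaf. That derivation is the actual content of the first half of the lemma and is absent from your write-up.

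A second, smaller defect is the appeal to uniform Castelnuovo--Mumford regularity (``choosing $m\ge m_1$ so that $h^1(E'(m))=0$'') for the saturated subsheaves $E'$ in the branch $\deg(E')-r'\mu_P<-C$: those subsheaves have slope unbounded below, hence do not form a bounded family, so no uniform $m_1$ kills their $h^1$. Fortunately the step is unnecessary. For case (2), additivity gives $P_{E''}=P-P_{E'}$, so the polynomial inequality $\tfrac{r''}{r}P<P_{(E'',\alpha'')}$ reduces to a comparison of the coefficients in degree $\dim X-1$, namely to $\deg(E')-r'\mu_P<-\epsilon(\alpha'')\cdot(\text{the relevant coefficient of }\delta)$; this holds as soon as $C$ dominates that coefficient and requires no cohomology at all. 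Case (1) is correctly handled by Le Potier--Simpson essentially as you describe, but only once $\mu_{\max}(E')\le\mu_{\max}(E)$ has been honestly bounded in the first step rather than assumed.
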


\begin{thm} \label{2.1} There is an integer $m_0$ such that the following properties of  $\Gamma$-framed modules having torsion free kernel with Hilbert polynomial $P$ are equivalent for $m \geq m_0$: 
\begin{enumerate}
 \item $(E,\alpha)$ is $\Gamma$-semistable.
\item  we have $P(m) \leq h^0((E,\alpha)(m))$ and $$h^0((E', \alpha')(m)) (\leq) r'/r P(m)$$ for all $\Gamma$-sub-modules $(E',\alpha')$ of rank $r'$, where $0 \neq E' \neq E$.
\item  $h^0((E'', \alpha'')(m)) (\geq) r''/r P(m)$ where $(E'', \alpha'')$ is a quotient $\Gamma$-module  of rank $r''$ and $E \neq E'' \neq 0$.
\end{enumerate}
 Moreover, for any framed module satisfying these conditions, $E$ is $m$-regular.
\end{thm}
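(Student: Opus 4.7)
The plan is to adapt \cite[Theorem 2.1]{hl} to the $\Gamma$-equivariant setting, with Lemma \ref{2.4} furnishing the essential boundedness input. Since $\Gamma$ is finite and acts $\cO_Y$-linearly, all numerical invariants of $\Gamma$-invariant sub- or quotient modules agree with those computed after forgetting the $\Gamma$-structure, and the saturation of a $\Gamma$-invariant submodule remains $\Gamma$-invariant (because $\Gamma$ preserves the torsion subsheaf of $E/E'$). Consequently every estimate of \cite{hl} passes verbatim to $\Gamma$-invariant data; only the class of submodules to be tested shrinks, which weakens rather than strengthens each of the three conditions.

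I would first dispatch $(2) \Leftrightarrow (3)$ formally. For any $\Gamma$-invariant short exact sequence $0 \to E' \to E \to E'' \to 0$ with induced framings, additivity of Euler characteristics gives $P_{(E,\alpha)} = P_{(E',\alpha')} + P_{(E'',\alpha'')}$; once $h^0((E,\alpha)(m)) = P(m)$ is established by $m$-regularity, the upper bound on $h^0((E',\alpha')(m))$ corresponds by subtraction to the lower bound on $h^0((E'',\alpha'')(m))$, with strict inequality transferring.

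For $(1) \Rightarrow (2)$, I would invoke Lemma \ref{2.4} in its $S^s$-branch: the family of $\Gamma$-semistable framed modules with Hilbert polynomial $P$ is bounded, so after enlarging $m_0$ every such $E$ is $m$-regular and $h^0((E,\alpha)(m)) = P(m)$. Since passing to the saturation only enlarges $h^0$, it suffices to bound $h^0$ on saturated $\Gamma$-submodules. Lemma \ref{2.4} then either delivers the strict inequality $h^0((E',\alpha')(m)) < \frac{r'}{r}P(m)$ directly, or confines $\deg(E') - r'\mu_P$ to a bounded interval. In the bounded case the saturated candidates themselves form a bounded family, so a further enlargement of $m_0$ forces $h^0(E'(m)) = P_{E'}(m)$ uniformly, and the semistability inequality on reduced Hilbert polynomials translates into the required $h^0$-bound. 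For $(2) \Rightarrow (1)$, suppose (2) holds but some saturated $\Gamma$-submodule destabilizes, that is $P_{(E',\alpha')} > \frac{r'}{r}P$; this Hilbert polynomial inequality bounds $\deg(E')$ from below, while the fact that (2) places $(E,\alpha)$ itself in a bounded family bounds $\deg(E')$ from above, whence $E'$ lies in a bounded family and $h^0((E',\alpha')(m)) = P_{(E',\alpha')}(m) > \frac{r'}{r}P(m)$ for $m \geq m_0$, contradicting (2). The implication $(3) \Rightarrow (1)$ is handled symmetrically via the $S''_m$-branch of Lemma \ref{2.4}. The $m$-regularity of $E$ is a byproduct of the boundedness established in the first step.

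The main obstacle is choosing a single $m_0$ that is simultaneously large enough for $m$-regularity of $E$, for the identification $h^0(E'(m)) = P_{E'}(m)$ on every bounded family of saturated $\Gamma$-submodules that can arise, and for the dichotomy of Lemma \ref{2.4} to become effective; this is precisely where Lemma \ref{2.4} is invoked and where the work of the non-equivariant proof is concentrated. The $\Gamma$-equivariance itself introduces no new numerical difficulty.
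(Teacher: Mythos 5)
Your proposal is correct and follows exactly the route the paper intends: the paper gives no independent proof of Theorem \ref{2.1}, stating only that the arguments of Section 2 of Huybrechts--Lehn generalize straightforwardly to the $\Gamma$-equivariant setting with Lemma \ref{2.4} as the key boundedness input, which is precisely the adaptation you carry out. Your observations that saturations of $\Gamma$-invariant submodules remain $\Gamma$-invariant and that restricting the test objects to $\Gamma$-submodules only weakens the conditions are the (unstated) justifications the paper relies on.
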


As a corollary to the above theorem we have
\begin{lem} \label{2.9} If $(E,\alpha)$ is a semistable $\Gamma$-framed module, $m \geq m_0$ an integer, and $(E',\alpha')$ a submodule of rank $r'$ such that $h^0((E', \alpha')(m))=r'/r P(m)$, then $(E',\alpha')$ is semistable with reduced Hilbert polynomial $P/r$. 
\end{lem}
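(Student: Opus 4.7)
The plan is to adapt the strategy of the analogous statement in \cite{hl} (Lemma 1.13 there) to the $\Gamma$-equivariant framed setting, using Theorem \ref{2.1} as the main engine. The idea is that extremality of $(E',\alpha')$ in the $h^0$-inequality of Theorem \ref{2.1}(2) propagates: $(E',\alpha')$ itself will satisfy condition (2) relative to its own Hilbert polynomial, from which semistability follows.

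First I would verify that $P_{(E',\alpha')}(m) = (r'/r) P(m)$. Consider the $\Gamma$-saturation $(\tilde{E'}, \tilde{\alpha'})$ of $(E',\alpha')$ inside $(E,\alpha)$; it has the same rank $r'$ and the quotient $\tilde{E'}/E'$ is torsion. Applying Theorem \ref{2.1}(2) to $(E,\alpha)$ gives $h^0((\tilde{E'},\tilde{\alpha'})(m)) \leq (r'/r) P(m)$, while the inclusion $E' \hookrightarrow \tilde{E'}$ yields $h^0((E',\alpha')(m)) \leq h^0((\tilde{E'},\tilde{\alpha'})(m))$. Combined with the hypothesis, the two quantities agree, which forces $\tilde{E'}/E' = 0$, so $E'$ is already saturated. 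Invoking boundedness of saturated $\Gamma$-sub-framed-modules of semistable $(E,\alpha)$ (Lemma \ref{2.4}, generalized from \cite[\S 2]{hl}), one may enlarge $m_0$ so that every such saturated $E'$ is itself $m$-regular. Hence $h^0(E'(m)) = P_{E'}(m)$ and consequently $P_{(E',\alpha')}(m) = h^0((E',\alpha')(m)) = (r'/r) P(m)$.

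Next I would verify the $h^0$-inequality of Theorem \ref{2.1}(2) for $(E',\alpha')$ relative to its own Hilbert polynomial. For any $\Gamma$-sub-framed-module $(F,\beta) \subset (E',\alpha')$ of rank $f$, we have $(F,\beta) \subset (E,\alpha)$, and Theorem \ref{2.1}(2) applied to the latter gives
\[
h^0((F,\beta)(m)) \;\leq\; \frac{f}{r} P(m) \;=\; \frac{f}{r'} \cdot \frac{r'}{r} P(m) \;=\; \frac{f}{r'} P_{(E',\alpha')}(m).
\]
Together with $P_{(E',\alpha')}(m) = h^0((E',\alpha')(m))$ established above, this is precisely condition (2) of Theorem \ref{2.1} for $(E',\alpha')$. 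Applying the implication $(2) \Rightarrow (1)$ of Theorem \ref{2.1} to $(E',\alpha')$ then yields that $(E',\alpha')$ is $\Gamma$-semistable with reduced Hilbert polynomial $P_{(E',\alpha')}/r' = P/r$.

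The main obstacle lies in Step 1, specifically in arranging a single $m_0$ that works simultaneously for $(E,\alpha)$ and for every Hilbert polynomial occurring among saturated $\Gamma$-subsheaves of semistable framed modules with Hilbert polynomial $P$. This uniformity is secured by the boundedness results already developed before Theorem \ref{2.1}, in particular Lemma \ref{2.4}; the set of such Hilbert polynomials is finite, so $m_0$ may be chosen as the maximum of the thresholds required for each.
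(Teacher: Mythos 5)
The paper does not actually prove Lemma \ref{2.9}; it is stated as one of the results whose proofs "generalize in a straightforward way" from \cite{hl}, so there is no in-text argument to match against. Your strategy — force $P_{(E',\alpha')}(m)=(r'/r)P(m)$ via saturation and boundedness, then propagate the $h^0$-inequality of Theorem \ref{2.1}(2) down to $(E',\alpha')$ — is the standard Huybrechts--Lehn route and is essentially sound. Two points deserve attention, one of which is a real gap as written.

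The gap is in the saturation step. From $h^0((E',\alpha')(m)) \le h^0((\tilde{E'},\tilde{\alpha'})(m)) \le (r'/r)P(m)$ and the hypothesis you conclude that the two $h^0$'s agree and that this "forces $\tilde{E'}/E'=0$". Equality of dimensions of spaces of sections does not by itself kill the torsion quotient $T=\tilde{E'}/E'$: the sequence $0\to H^0(E'(m))\to H^0(\tilde{E'}(m))\to H^0(T)\to H^1(E'(m))$ only shows $H^0(T)$ injects into $H^1(E'(m))$, which is exactly the group you are trying to control. What rescues the argument is that $\tilde{E'}$ \emph{is} saturated, hence lies in the bounded family of Lemma \ref{2.4} (the hypothesis rules out the strict alternative there), hence is $m$-regular after enlarging $m_0$; then $\tilde{E'}(m)$ is globally generated, so if every section of $\tilde{E'}(m)$ factors through $E'(m)$ the induced surjection onto $T$ vanishes and $T=0$. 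In other words, you must invoke boundedness and $m$-regularity for the saturation \emph{first}, and only then deduce $E'=\tilde{E'}$; your proposal runs these two steps in the wrong order and omits the global-generation input. You should also note the edge case $\epsilon(\alpha')\neq\epsilon(\tilde{\alpha'})$ (framing vanishing on $E'$ but not on its saturation), which perturbs the comparison of framed $h^0$'s by $\delta(m)$, and the fact that equality $P_{(E',\alpha')}(m)=(r'/r)P(m)$ at the single value $m$ must still be upgraded to an identity of polynomials (immediate on a curve, where the semistability inequality $P_{(E',\alpha')}\preceq (r'/r)P$ has constant difference; in higher dimension it again needs the finiteness of Hilbert polynomials you invoke at the end).

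Once $P_{(E',\alpha')}=(r'/r)P$ is known as a polynomial identity, your second step can be shortened: every $\Gamma$-submodule $F\subset E'$ is a $\Gamma$-submodule of $E$ carrying the same induced framing, so $P_{(F,\beta)}\le \rank(F)\,P/r=\rank(F)\,P_{(E',\alpha')}/r'$ directly from Definition \ref{mainss} applied to $(E,\alpha)$. This gives semistability of $(E',\alpha')$ from the definition, without re-running Theorem \ref{2.1} for the new Hilbert polynomial and without the uniform-$m_0$ bookkeeping your last paragraph is devoted to.
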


\subsection{Moduli of $\Gamma$-framed modules}
In this section we construct the moduli of semi-stable $\Gamma$-vector bundles with frames.
Let $V$ be a vector space of dimension $P_0(m)$. We denote $Hilb(V \otimes  \cO_Y(-m), P_0) $  by $\HH$,  $Grass(V \otimes H^0(Y, \cO_Y(l-m)), P_0(l))$  by $\GG_l$ and $\PP(\Lambda^{P_0(l)}(V \otimes H^0(\cO_Y(l-m))))$ by $\PP_l$. For sufficiently large $l$, we have closed immersions
$ \HH \ra \GG_l \ra \PP_l$.
Let $L$ denote the restricted ample line bundle on $\HH$ and let $\PP= \PP(\Hom(V, H^0_Y(D(m)))^*)$. Let $Z' \subset \HH \times \PP$ denote the closed sub-scheme of points $([q: V \otimes \cO_Y(-m) \ra F],[a: V \ra H^0_Y(D(m))])$ for which we have a factoring
\begin{equation}
 \xymatrix{
\cO_Y(-m) \otimes V \ar[r]^a \ar[d]^q & H^0_Y(D(m) \otimes \cO_Y(-m)) \\
F \ar@{.>}[ur] & 
}
\end{equation}
that induces the framing $\alpha: F \ra D$. The action of $\Gamma$ lifts to $Z'$. We let $Z'_\Gamma \subset Z'$ denote the sub-scheme fixed under $\Gamma$. The group $SAut_\Gamma(V)=\Aut_\Gamma(V) \cap \SL(V)$ acts diagonally on $Z'_\Gamma$ and the line bundles $\cO_{Z'}(n_1, n_2) = p_\HH^* L^{n_1} \otimes p_\PP^* \cO(n_2)$ carry natural $SAut_\Gamma(V)$ linearizations.

The following proposition is an analogue of Prop 3.1 in \cite[]{partha}.

\begin{prop} \label{ssinnum}
 For sufficiently large $l$, a point $([q],[a]) \in Z'_\Gamma$ is semi-stable   with respect to the linearization $\cO_{Z'_\Gamma}(n_1,n_2)$ if and only if
for any non-trivial proper $\Gamma$-linear subspace $V'$ of $V$, generating the subsheaf $F' \subset F$, we have 
$$\dim V' (n_1 P_0(l) + n_2) (\leq) \dim V (n_1 P_{F'}(l) + n_2 \epsilon (\alpha |_{F'})).$$
\end{prop}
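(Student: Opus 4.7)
My plan is to apply the Hilbert-Mumford numerical criterion for the $SAut_\Gamma(V)$-action on $Z'_\Gamma \subset \HH \times \PP$ linearized by $\cO_{Z'_\Gamma}(n_1,n_2)$ and to match the resulting weight at the limit of a one-parameter subgroup to the claimed inequality.

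First, any one-parameter subgroup $\lambda\colon \GG_m \to SAut_\Gamma(V)$ decomposes $V = \bigoplus_i V_{\gamma_i}$ into weight spaces. Because $\lambda$ commutes with the $\Gamma$-action, each $V_{\gamma_i}$ is $\Gamma$-invariant, and the increasing filtration $V^{\leq i} = \bigoplus_{j \leq i} V_{\gamma_j}$ is a $\Gamma$-invariant flag; conversely, every $\Gamma$-invariant flag together with integer weights summing to zero on the graded pieces arises in this way. Write $F^{\leq i} \subseteq F$ for the subsheaf generated via $q$ by $V^{\leq i} \otimes \cO_Y(-m)$. Then $\alpha|_{F^{\leq i}} = 0$ if and only if $a|_{V^{\leq i}} = 0$, so the framing invariant $\epsilon(\alpha|_{F^{\leq i}})$ is controlled directly by $a$.

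Next, I would compute the total $\lambda$-weight at the limit point $x = \lim_{t \to 0} \lambda(t)\cdot([q],[a])$ factor by factor. On the $\HH$-factor, the standard Plücker computation for $\GG_l$ gives a weight $\sum_i \gamma_i\bigl(P_{F^{\leq i}}(l) - P_{F^{\leq i-1}}(l)\bigr)$, provided $l$ is chosen large enough that $h^0(F^{\leq i}(l)) = P_{F^{\leq i}}(l)$ for every subsheaf in play and $\GG_l \hookrightarrow \PP_l$ is a closed immersion. On the $\PP$-factor, the class $[a]$ flows under $\lambda(t)$ into the lowest-weight summand it meets, giving a weight $\sum_i \gamma_i\bigl(\epsilon(\alpha|_{F^{\leq i}}) - \epsilon(\alpha|_{F^{\leq i-1}})\bigr)$. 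Weighting these by $n_1$ and $n_2$ respectively, applying Abel summation, and using the $\SL$-condition $\sum_i \gamma_i \dim V_{\gamma_i} = 0$, the total $\lambda$-weight rewrites as a non-negatively weighted combination (coefficients $\gamma_{i+1} - \gamma_i > 0$) of the bracketed expressions
\[
\dim V\bigl(n_1 P_{F^{\leq i}}(l) + n_2\, \epsilon(\alpha|_{F^{\leq i}})\bigr) \;-\; \dim V^{\leq i}\bigl(n_1 P_0(l) + n_2\bigr).
\]
Hilbert-Mumford then equates (semi)-stability of $([q],[a])$ with non-negativity (resp.\ positivity) of each bracket for every $\Gamma$-invariant flag, and the standard reduction to two-step flags shows it suffices to test the inequality on individual $\Gamma$-invariant subspaces $V' \subset V$ generating their image subsheaves $F' \subset F$, which is precisely the statement of the proposition.

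The main technical obstacle is the uniform choice of $l$: one must pick $l$ large enough that, simultaneously over the entire bounded family of subsheaves $F'$ generated by $\Gamma$-invariant subspaces $V' \subseteq V$, the Hilbert polynomial $P_{F'}(l)$ equals $h^0(F'(l))$ and the Plücker embedding of $\GG_l$ faithfully tracks quotient dimensions. Boundedness of this family together with the necessary $m$-regularity are supplied by Theorem \ref{2.1} and the arguments of \cite{hl}. The $\Gamma$-equivariance plays no role in the numerics because $\Gamma$ and $\lambda$ commute; it only restricts the class of subspaces one has to test to the $\Gamma$-invariant ones.
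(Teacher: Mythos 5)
Your proposal is correct and is exactly the standard Hilbert--Mumford weight computation (weight filtration from a one-parameter subgroup of $SAut_\Gamma(V)$, Pl\"ucker weight on the Quot/Grassmannian factor, the telescoping $\epsilon$-weight on the $\PP$-factor, Abel summation, and reduction to two-step flags) that underlies the proofs in \cite{hl} and the cited analogue; the paper itself omits the proof precisely because it is this straightforward generalization, with $\Gamma$-equivariance entering only through the reductivity of $SAut_\Gamma(V)$ and the $\Gamma$-invariance of the weight spaces, as you note. No gaps.
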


We recall that $P_0$ is the polynomial $P-\delta$.

\begin{prop} \label{3.2} For sufficiently large $l$, a point $([q],[a]) \in Z'_\Gamma$ is semi-stable if and only if $(F, \alpha)$ is $\Gamma$-semi-stable and $q$ induces an isomorphism $V \ra H^0(Y,F(m))$.
\end{prop}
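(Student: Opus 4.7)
The plan is to translate the numerical GIT criterion of Proposition \ref{ssinnum} into the cohomological characterization of $\Gamma$-semi-stability supplied by Theorem \ref{2.1}, using regularity to pass between dimensions of subspaces of $V$ and Hilbert polynomials of the generated subsheaves. Concretely, I would first use the boundedness results underlying Section~2 to fix $m \gg 0$ so that every $\Gamma$-semi-stable framed module with the prescribed invariants is $m$-regular; in particular $h^0(F(m))=P_0(m)=\dim V$ and $F(m)$ is globally generated whenever $F$ is a quotient coming from a GIT semi-stable point of $Z'_\Gamma$. I then fix $l \gg m$ so that Theorem \ref{2.1} and Lemma \ref{2.4} apply at $l$, and so that the weights $(n_1,n_2)$ in the linearization $\cO_{Z'_\Gamma}(n_1,n_2)$ are chosen (as in \cite{hl}) so that the inequality of Proposition \ref{ssinnum} becomes, up to multiplication by a positive constant, the ratio inequality $P_{(F',\alpha')}(l)/r' \leq P_{(F,\alpha)}(l)/r$.

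For the converse direction, assume $(F,\alpha)$ is $\Gamma$-semi-stable and that $q$ induces an isomorphism $V \cong H^0(Y,F(m))$. For any nontrivial proper $\Gamma$-linear subspace $V' \subset V$ generating a subsheaf $F' \subset F$, one has $V' \subset H^0(F'(m))$, hence $\dim V' \leq h^0(F'(m))$. Passing to the induced framed sub-module $(F',\alpha')$, Theorem \ref{2.1}(2) applied to $(F,\alpha)$ at $l$ gives $h^0((F',\alpha')(l)) \leq (r'/r)\,P(l)$, and combining this with the bound on $\dim V'$ yields the numerical inequality of Proposition \ref{ssinnum} with our chosen weights, proving that $([q],[a])$ is GIT semi-stable.

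For the forward direction, suppose $([q],[a])$ is GIT semi-stable. Applying Proposition \ref{ssinnum} with $V' = \ker\bigl(V \to H^0(F(m))\bigr)$, the subsheaf generated by $V'$ is zero and the right-hand side of the inequality collapses to a non-positive quantity; the chosen positivity of $n_1 P_0(l)+n_2$ then forces $\dim V'=0$, so $V \hookrightarrow H^0(F(m))$ is injective, and equality of dimensions together with $m$-regularity of $F$ promotes this to an isomorphism. Next, for an arbitrary $\Gamma$-invariant subsheaf $F' \subset F$, I would take $V' := V \cap H^0(F'(m))$, whose generated subsheaf sits inside $F'$. The numerical inequality of Proposition \ref{ssinnum}, combined with $l$-regularity of $F'$ (which is controlled by Lemma \ref{2.4}, bounding the degrees of potential destabilisers), translates into the cohomological condition $h^0((F',\alpha')(l)) \leq (r'/r) P(l)$; hence by Theorem \ref{2.1}(2), $(F,\alpha)$ is $\Gamma$-semi-stable.

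The main obstacle is the calibration of the weights $(n_1,n_2)$: one must verify that the numerical GIT inequality in Proposition \ref{ssinnum} really is equivalent, for $l \gg 0$, to the framed Hilbert polynomial inequality of Definition \ref{mainss}, including the treatment of the $\delta$-offset $\epsilon(\alpha|_{F'})$ and the gap between a $\Gamma$-subspace $V' \subset V$ and its saturated generated subsheaf $F' \subset F$. The $\Gamma$-equivariance itself enters only mildly, since Proposition \ref{ssinnum} is already formulated in terms of $\Gamma$-invariant subspaces and the cohomology and framing constructions of Section~2 are functorial with respect to the $\Gamma$-action, so that the tests in Theorem \ref{2.1} can be restricted to $\Gamma$-invariant sub- and quotient modules without loss.
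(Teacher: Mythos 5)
Your outline is the standard Huybrechts--Lehn argument (the numerical Hilbert--Mumford criterion of Proposition \ref{ssinnum}, the kernel trick to get injectivity of $V \to H^0(Y,F(m))$, the test subspaces $V' = V \cap H^0(F'(m))$ for the forward direction, and the calibration of the weights $(n_1,n_2)$ so that the GIT inequality matches the framed Hilbert-polynomial inequality), which is exactly the route the paper intends: Proposition \ref{3.2} is stated without proof, the section explicitly deferring to \cite{hl} as a straightforward equivariant generalization. Your sketch matches that approach and correctly isolates the only genuinely technical points, namely the weight calibration and the boundedness input (Lemma \ref{2.4}) needed to pass between $h^0(F'(m))$ and $P_{F'}(l)$ uniformly over all generated subsheaves.
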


\begin{prop} \label{3.3} There exists a projective scheme $\cM^{ss}$ and a morphism $\pi: Z^{ss} \ra \cM^{ss}$ which is a good quotient for the $\SL(V)$ action on $Z^{ss}$. Moreover, there is an open subscheme $\cM^s \subset \cM^{ss}$ such that $Z^s = \pi^{-1}( \cM^s)$ and $\pi: Z^s \ra \cM^s$ is a geometric quotient. Two points $([q], [a]) $ and $([q'],[a'])$ are mapped to the same point in $\cM^{ss}$ if and only if the corresponding $\Gamma$-framed modules are $S$-equivalent.
\end{prop}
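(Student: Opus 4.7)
The plan is to deduce this from Mumford's Geometric Invariant Theory applied to the locally closed subscheme $Z'_\Gamma \subset \HH \times \PP$ with the $SAut_\Gamma(V)$-linearized line bundle $\cO_{Z'_\Gamma}(n_1,n_2)$ used in Proposition \ref{ssinnum}. Since $\HH \times \PP \hookrightarrow \PP_l \times \PP$ via the Plücker-type embedding, and $Z'_\Gamma$ is cut out by the closed $\Gamma$-invariance and factorization conditions, the GIT semistable locus $(Z'_\Gamma)^{ss}$ admits a good categorical quotient $\pi : (Z'_\Gamma)^{ss} \to \cM^{ss}$ which is projective (taking the projective closure inside the ambient product and applying Mumford's theorem). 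One then restricts to the open stable locus $(Z'_\Gamma)^{s}$, on which all orbits are closed with finite stabilizers, to obtain the geometric quotient $\cM^{s}$.

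The first identification step is to match GIT (semi)stability with the framed module notion: by Proposition \ref{3.2}, for $l \gg 0$ and $m \geq m_0$ (as in Theorem \ref{2.1}), one has $(Z'_\Gamma)^{ss} = Z^{ss}$, where $Z^{ss}$ denotes the locus of points $([q],[a])$ with $(F,\alpha)$ a $\Gamma$-semistable framed module and $q$ inducing $V \xrightarrow{\sim} H^0(Y, F(m))$; likewise $(Z'_\Gamma)^{s} = Z^{s}$. Thus $\pi : Z^{ss} \to \cM^{ss}$ is the desired good quotient and its restriction $Z^{s} \to \cM^{s}$ is a geometric quotient.

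It remains to identify the fibres of $\pi$ with $S$-equivalence classes. The standard approach is via the Hilbert-Mumford criterion: two semistable points $([q],[a])$ and $([q'],[a'])$ lie in the same fibre of $\pi$ iff the closures of their $SAut_\Gamma(V)$-orbits meet in $Z^{ss}$. Given a $1$-parameter subgroup $\lambda : \GG_m \to SAut_\Gamma(V)$ with weight decomposition $V = \bigoplus_k V_k$, the limit $\lim_{t \to 0} \lambda(t)\cdot([q],[a])$ is represented by the associated graded of the corresponding $\Gamma$-stable filtration $V_{\leq \bullet}$ of $V$, which by Lemma \ref{2.9} determines a filtration $F_\bullet \subset F$ whose graded pieces $gr_i(F_\bullet)$ are themselves $\Gamma$-semistable with the same reduced Hilbert polynomial, carrying an induced framing where at most one component is non-zero. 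Passing to a filtration refining such limits in both orbit-closures one obtains Jordan-Hölder filtrations in the sense of Proposition \ref{jh}, whose associated gradeds must therefore agree; conversely, any two framed modules with isomorphic gradeds have orbit closures both containing the limit point corresponding to $(gr(E), gr(\alpha))$. Combined with the rigidity of stable objects (Lemma \ref{1.6}), this shows that $\pi([q],[a]) = \pi([q'],[a'])$ iff the underlying framed modules are $S$-equivalent.

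The main obstacle in this program is the last step, namely correctly matching the one-parameter subgroup degenerations with the Jordan-Hölder graded object in the equivariant framed setting, because the framing $\alpha$ may acquire or lose its non-vanishing under specialization and because one must take care that the filtration produced by $\lambda$ is $\Gamma$-stable. The key technical point that makes this work is the equivalent semistability characterization in Theorem \ref{2.1} together with Lemma \ref{2.9}, which guarantee that subspaces $V' \subset V$ saturating the GIT inequality produce $\Gamma$-invariant semistable subsheaves with matching reduced Hilbert polynomial; iterating gives the Jordan-Hölder filtration and identifies the unique closed orbit in each GIT fibre with the isomorphism class of $(gr(E), gr(\alpha))$.
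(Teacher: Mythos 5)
Your proposal is correct and follows essentially the same route the paper intends: the paper explicitly omits this proof as a straightforward generalization of Huybrechts--Lehn, and your argument (GIT quotient of $Z'_\Gamma$ with the linearization of Proposition \ref{ssinnum}, identification of GIT and framed semistability via Proposition \ref{3.2}, and identification of fibres with $S$-equivalence classes through one-parameter degenerations, Lemma \ref{2.9}, Proposition \ref{jh} and Lemma \ref{1.6}) is precisely that generalization. The only cosmetic point is that $Z'$ is already defined as a closed subscheme of $\HH\times\PP$, so no projective closure step is needed.
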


\begin{lem} \label{scl} Let $(E,\alpha)$ and $(F,\beta)$ be flat families of $\Gamma$-semi-stable framed modules  parametrized by a scheme $T$ of finite type over $\CC$ with the same reduced Hilbert polynomial $p$. Then the function
$$t \ms \dim_{k(t)} \Hom_\Gamma ((E_t, \alpha_t),(F_t, \beta_t)) $$
is upper semi-continuous in $t \in T$.
\end{lem}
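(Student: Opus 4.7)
The strategy is to realize $\dim_{k(t)} \Hom_\Gamma((E_t, \alpha_t), (F_t, \beta_t))$ as the fiber dimension of the kernel of a morphism between locally free $\cO_T$-modules, so that the standard fact that such kernel dimensions are upper semi-continuous yields the lemma. A $\Gamma$-framed homomorphism $\phi : (E_t,\alpha_t) \to (F_t,\beta_t)$ is a $\Gamma$-equivariant homomorphism $\phi: E_t \to F_t$ satisfying $\beta_t \circ \phi = \lambda \cdot \alpha_t$ for some $\lambda \in \CC$. Since our families are flat, $\alpha_t \neq 0$ for all $t \in T$, and $\lambda$ is determined by $\phi$. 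Hence the framed Hom is naturally identified with the kernel of the linear map
\[
\Psi_t : \Hom_\Gamma(E_t, F_t) \oplus \CC \longrightarrow \Hom_\Gamma(E_t, D_t), \qquad (\phi, \lambda) \longmapsto \beta_t \circ \phi - \lambda \cdot \alpha_t.
\]

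To globalize, observe that $E$ and $F$ are flat over $T$ and carry a compatible $\Gamma$-action with $|\Gamma|$ invertible in $\CC$. The $\Gamma$-equivariant version of Grothendieck's cohomology-and-base-change theorem (obtained by applying the usual Grothendieck complex to a $\Gamma$-equivariant locally free resolution and then taking $\Gamma$-invariants, which is exact and preserves local freeness in characteristic zero) yields a two-term complex $K^\bullet = [K^0 \to K^1]$ of locally free $\cO_T$-modules with $H^0(K^\bullet_t) = \Hom_\Gamma(E_t, F_t)$ for every $t$, and similarly an $L^\bullet$ with $H^0(L^\bullet_t) = \Hom_\Gamma(E_t, D_t)$. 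Post-composition with $\beta$ induces a morphism of complexes $K^\bullet \to L^\bullet$, while $\alpha$ defines a global section of $L^0$ lying in $\ker d^L$. One then forms the morphism of locally free $\cO_T$-modules
\[
\widetilde{\Psi} : K^0 \oplus \cO_T \longrightarrow K^1 \oplus L^0, \qquad (g, \lambda) \longmapsto \bigl(d^K g,\ \beta \circ g - \lambda \alpha\bigr),
\]
whose fiber kernel at each $t \in T$ is exactly $\ker \Psi_t$ and hence has dimension equal to $\dim_{k(t)} \Hom_\Gamma((E_t,\alpha_t),(F_t,\beta_t))$.

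The conclusion follows because the rank of a morphism between locally free sheaves is lower semi-continuous (the locus where rank is at most $r$ is cut out by vanishing of minors), so the dimension of its fiber kernel is upper semi-continuous. The main technical step is the equivariant base change producing $K^\bullet$ and $L^\bullet$; this is straightforward here because $\Gamma$ is finite and we work in characteristic zero, so taking $\Gamma$-invariants commutes with the Grothendieck construction while preserving local freeness, and the identification of the kernel of $\widetilde{\Psi}$ with the framed Hom is then purely formal.
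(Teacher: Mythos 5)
Your argument is correct and is essentially the paper's own (spelled out for the quadratic refinement in Lemma \ref{scl2}, following Huybrechts--Lehn): the map $\widetilde{\Psi}$ you build is exactly the degree $-1$ differential of the mapping cone $C(\psi)^\bullet$ used there, so $\ker\widetilde{\Psi}_t = h^{-1}(C(\psi)^\bullet \otimes k(t))$, and your final rank-semicontinuity step is the standard semicontinuity for that complex of locally free modules. The one hypothesis doing real work is $\alpha_t \neq 0$ for all $t$ (so that $\lambda$ is determined by $\phi$ and $\dim \ker \Psi_t$ equals the dimension of the framed Hom rather than exceeding it by one), which you correctly extract from the flatness convention for families of framed modules.
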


\begin{thm} \label{modspace} Let $\delta \in \QQ[m]$ be a polynomial with positive leading coefficient and of degree strictly less than $\dim(X)$. There is a projective scheme $\cM_{\delta}^{ss}(Y \ra X, D, P)$ which is a coarse moduli space for the functor which associates to a scheme $T$ the set of isomorphism classes of flat families of semi-stable $\Gamma$-framed modules defined over $T$ with Hilbert polynomial $P$. Moreover, there is an open subscheme $\cM_{\delta}^s(Y \ra X, D, P)$ which represents the subfunctor of families of stable $\Gamma$-framed modules, i.e $\cM_{\delta}^s(Y \ra X, D, P)$ is a fine moduli space. A closed point in $\cM_{\delta}^{ss}(Y \ra X, D, P)$ represents an $S$-equivalence class of semi-stable $\Gamma$-framed modules.
\end{thm}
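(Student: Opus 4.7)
The plan is to assemble the preceding technical results into a GIT construction on the parameter space $Z'_\Gamma \subset \HH \times \PP$ and then verify the moduli properties. First I would invoke Theorem~\ref{2.1} and Lemma~\ref{2.4} to pick a single integer $m$ large enough that every $\Gamma$-semi-stable framed module $(E,\alpha)$ with Hilbert polynomial $P$ is $m$-regular and satisfies the sheaf-theoretic (semi-)stability inequality exactly when it satisfies the corresponding inequality on sections at $m$. Lemma~\ref{1.11} lets me restrict to torsion-free kernels. Fixing a vector space $V$ of dimension $P(m)$, an isomorphism $V \simeq H^0(Y,E(m))$ yields a $\Gamma$-equivariant quotient $V \otimes \cO_Y(-m) \twoheadrightarrow E$, i.e.\ a point of $\HH$, and $\alpha$ gives a point of $\PP$; the resulting point lies in the locally closed subset of $Z'_\Gamma$ where $q$ is the full evaluation map.

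Next I would apply GIT for the $SAut_\Gamma(V)$-action on $Z'_\Gamma$ with linearization $\cO_{Z'_\Gamma}(n_1,n_2)$, the ratio $n_1:n_2$ being chosen to encode $\delta$. Proposition~\ref{ssinnum} furnishes the Hilbert--Mumford numerical criterion, and Proposition~\ref{3.2} translates this precisely into: $([q],[a])$ is GIT semi-stable iff $q$ is a full evaluation map and $(F,\alpha)$ is $\Gamma$-semi-stable. Proposition~\ref{3.3} then produces the good quotient $\pi : Z^{ss} \to \cM^{ss}$, which is projective as a GIT quotient of a projective scheme, and an open subscheme $\cM^s$ over which $Z^s \to \cM^s$ is a geometric quotient, together with the identification of closed points of $\cM^{ss}$ with $S$-equivalence classes.

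To verify the coarse moduli property, given a flat family $(E_T,\alpha_T)$ on a scheme $T$, cohomology and base change make $p_{T*}(E_T(m))$ locally free of rank $P(m)$ once $m$ is sufficiently large; trivializing it étale-locally produces a lift to $Z^{ss}$, and composing with $\pi$ yields a globally defined morphism $T \to \cM^{ss}$, because distinct trivializations differ by the $SAut_\Gamma(V)$-action which is collapsed by $\pi$. Universality with respect to natural transformations into schemes then follows from the fact that $\pi$ is a categorical quotient. On the stable locus, Lemma~\ref{1.6} ensures that stable $\Gamma$-framed modules have only the trivial framing-preserving automorphism, so the universal quotient on $\HH$ restricted to $Z^s$ descends along the geometric quotient to a universal family on $\cM^s$, giving the fine moduli statement.

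The hard part, were Propositions~\ref{ssinnum}--\ref{3.3} not already in hand, would be Proposition~\ref{3.2}: one must show that destabilising one-parameter subgroups of $SAut_\Gamma(V)$ correspond, via their weight filtrations on $V$, to $\Gamma$-invariant saturated sub-framed-modules realizing equality in the semi-stability inequality, and then invoke Lemma~\ref{2.9} to see that these subsheaves are themselves semi-stable with the same reduced Hilbert polynomial. The equivariant wrapping is comparatively light because $Z'_\Gamma$ is cut out by $\Gamma$-invariance from the start, so all subobjects entering the numerical criterion are automatically $\Gamma$-stable; the only delicate point is to check that the induced framings on graded pieces are correctly tracked, which is precisely the content of the convention $\epsilon(\alpha) \in \{0,1\}$ together with Proposition~\ref{jh}.
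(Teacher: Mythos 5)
Your proposal is correct and follows essentially the same route the paper intends: the paper omits the proof of Theorem~\ref{modspace}, deferring to the Huybrechts--Lehn construction, and your assembly of Theorem~\ref{2.1}, Lemma~\ref{1.11}, Propositions~\ref{ssinnum}, \ref{3.2}, \ref{3.3} and Lemma~\ref{1.6} into the GIT quotient of $Z'_\Gamma$ with the $\cO(n_1,n_2)$-linearization is exactly that argument, equivariantized. The only mismatch is notational: the paper takes $\dim V = P_0(m)$ with $P_0 = P - \delta$ (so that Proposition~\ref{3.2} reads $V \simeq H^0(Y,F(m))$ for $F$ of Hilbert polynomial $P_0$), whereas you take $\dim V = P(m)$; this does not affect the validity of the construction.
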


\section{Some remarkable properties of $\delta=1$ semi-stable framed modules on {\it curves}} \label{stenpro}
\begin{defi} \label{ssforfm} Let $f_1: V_1 \ra S_1$ and $f_2: V_2 \ra S_2$ be two framed modules. We define $(V_1,f_1, S_1) \otimes (V_2,f_2,S_2)$ as 
\begin{equation} \label{tensorprod}
 f_1 \otimes \Id_{V_2} + \Id_{V_1} \otimes f_2 : V_1 \otimes V_2 \ra S$$
where $S \hra S_1 \otimes V_2 \oplus V_1 \otimes S_2$ is defined as $$S =\{  a \oplus b \in S_1 \otimes V_2 \oplus V_1 \otimes S_2  | (\Id_{S_1} \otimes f_2)(a) = (f_1 \otimes \Id_{S_2})(b) \}.
\end{equation}
\end{defi}

\begin{prop} \label{kertor} If $(W,f)$ is semi-stable then $\ker(f)$ must be torsion free.
\end{prop}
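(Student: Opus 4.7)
The plan is to argue by contradiction: assume $\ker f$ has non-trivial torsion and show that this torsion directly destabilises $(W,f)$.

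First I would let $T$ denote the torsion subsheaf of $\ker f$, and record two basic features. Since the formation of the torsion subsheaf is a canonical functorial construction, the $\Gamma$-action on $\ker f \subseteq W$ restricts to a $\Gamma$-action on $T$, so $T$ is automatically a $\Gamma$-invariant sub-module of $W$. Moreover, as $T \subseteq \ker f$, the induced framing $\alpha' = f|_{T}$ is identically zero, whence $\epsilon(\alpha') = 0$ and $P_{(T, \alpha')} = P_T$.

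Next I would use that on a curve, any non-zero torsion sheaf $T$ has rank $r_T = 0$ and Hilbert polynomial equal to its length $\ell(T)$, a strictly positive constant. Provided $W$ has positive rank, which is the setting of interest since $W$ is a vector bundle, $T$ is a proper non-trivial $\Gamma$-sub-module, so one may feed $(T,\alpha')$ into the defining inequality of Definition \ref{mainss}:
$$\ell(T) \;=\; P_{(T,\alpha')} \;\leq\; r_T\cdot p \;=\; 0,$$
where $p$ denotes the reduced Hilbert polynomial of $(W,f)$. Since $\ell(T) > 0$, this is a contradiction, forcing $T = 0$ and thus proving that $\ker f$ is torsion free.

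The argument is essentially formal and no serious obstacle arises. The two crucial observations are that (a) the torsion subsheaf of any subsheaf is canonically $\Gamma$-invariant, and (b) a non-zero torsion sheaf automatically violates the semi-stability inequality because its rank is zero while its Hilbert polynomial is a positive constant. Notably, the specific value $\delta = 1$ plays no role here; the same proof works for any $\delta$ with positive leading coefficient, since $\delta$ drops out of $P_{(T,\alpha')}$ the moment $\alpha'$ vanishes. The only genuine hypothesis needed is that $W$ has positive rank, which ensures $T \neq W$.
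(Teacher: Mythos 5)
Your argument is correct and is essentially identical to the paper's proof, which applies the semi-stability inequality to the submodule $\ker(f)\cap \mathrm{Torsion}(W)$ — precisely the torsion subsheaf $T$ you consider, with induced framing zero and rank zero. The additional observations you make ($\Gamma$-invariance of $T$, independence of $\delta$) are correct and only flesh out what the paper leaves implicit.
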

\begin{proof}This follows by applying the definition of semi-stability of framed modules to the submodule $\ker(f) \cap Torsion(W) \subset W$.
\end{proof}
\begin{prop} \label{delta=1surj} If $f: W \ra S_{1/2}$ is a $\delta=|S_{1/2}|$-semi-stable framed module then the framing $f$ must be surjective.
\end{prop}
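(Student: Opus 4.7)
The plan is to test the semi-stability inequality of Definition \ref{mainss} against the subsheaf $K:=\ker(f)$ and read off surjectivity from a length comparison. I argue by contradiction: assuming $f\neq 0$ so that $\epsilon(f)=1$, suppose $\mathrm{Im}(f)\subsetneq S_{1/2}$.

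First I would verify that $K$ is a legitimate test subsheaf. Because $S_{1/2}$ is a skyscraper on the curve $Y$ and $W$ has positive rank $r$, the kernel $K$ is $\Gamma$-invariant (since $f$ is), of full rank $r$ (as $S_{1/2}$ is torsion), nonzero (a positive-rank sheaf cannot inject into a torsion sheaf), and proper in $W$ (since $f\neq 0$). The induced framing is $\alpha'=f|_K=0$, so $\epsilon(\alpha')=0$ and therefore $P_{(K,\alpha')}=P_K$. From the short exact sequence $0\to K\to W\to \mathrm{Im}(f)\to 0$, combined with the fact that a torsion sheaf on a curve has constant Hilbert polynomial equal to its length $|\cdot|$, one reads $P_K=P_W-|\mathrm{Im}(f)|$.

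Plugging this into the semi-stability inequality with the prescribed value $\delta=|S_{1/2}|$ yields
\[
P_W-|\mathrm{Im}(f)|\ =\ P_{(K,\alpha')}\ \leq\ \frac{r}{r}\,P_{(W,f)}\ =\ P_W-|S_{1/2}|,
\]
so $|\mathrm{Im}(f)|\geq|S_{1/2}|$. Combined with the tautological inclusion $\mathrm{Im}(f)\subseteq S_{1/2}$, equality of lengths forces equality of torsion sheaves, and $f$ is surjective---contradicting the supposition. There is no real obstacle: the whole proof is a one-line application of the definition, and the reason it goes through is precisely that the extremal value $\delta=|S_{1/2}|$ exactly balances, in the kernel inequality, the length drop $P_W-P_K=|\mathrm{Im}(f)|$ against the total length $|S_{1/2}|$; any strictly smaller $\delta$ would fail to force surjectivity.
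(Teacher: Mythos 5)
Your proof is correct and is essentially the paper's own argument: the paper's one-line proof likewise observes that $\deg(\ker(f))=\deg(W)-|\Img(f)|$ and applies the semi-stability inequality to $\ker(f)$, which at $\delta=|\Shalf|$ forces $|\Img(f)|\geq|\Shalf|$ and hence surjectivity. You have merely written out the same computation in terms of Hilbert polynomials, with the legitimacy checks on the test subsheaf made explicit.
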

\begin{proof}
This follows by observing that $\deg(\ker(f))=\deg(W)-|\Img(f)|$, and applying the semi-stability condition.
\end{proof}
\begin{prop} \label{kerstable} A framed module $(V,f, S)$, where $f: V \ra S$ is surjective, is $\delta=|S|$-(semi)stable if and only if $\ker(f)$ is a (semi)stable vector bundle in the usual sense.
\end{prop}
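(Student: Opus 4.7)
The plan is to translate the $\delta=|S|$-(semi)stability condition into a slope inequality on subsheaves and compare it directly with ordinary (semi)stability of $K:=\ker(f)$. On the curve $X$, Hilbert polynomials are linear in $m$, so for any non-trivial proper subsheaf $V'\subset V$ of rank $r'$ the framed inequality
$$P_{(V',f')}\;(\le)\;\tfrac{r'}{r}\,P_{(V,f)}$$
collapses to a comparison of constant terms. Surjectivity of $f$ forces $V/K\simeq S$, so $\deg(K)=\deg(V)-|S|$; with $\delta=|S|$ the normalized framed target $(P_V-\delta)/r$ coincides with $P_K/r$, and the condition reduces to
\begin{equation*}
\deg(V')-\epsilon(f')\,|S|\;(\le)\;r'\,\mu(K). \qquad (\ast)
\end{equation*}

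I would then split into two cases. If $V'\subset K$, then $f|_{V'}=0$, so $\epsilon(f')=0$ and $(\ast)$ reads $\mu(V')\,(\le)\,\mu(K)$, which is literally the (semi)stability inequality for $K$ applied to $V'$. Running this both ways immediately yields "$(V,f,S)$ semistable $\Rightarrow$ $K$ semistable" and one half of the converse. If $V'\not\subset K$, then $\epsilon(f')=1$; set $K':=V'\cap K$ and $\ell:=\mathrm{length}(V'/K')$. Since $V'/K'$ injects into the torsion sheaf $V/K=S$, one has $\ell\le|S|$, and $K'$ has the same rank $r'$ as $V'$, with $\deg(V')=\deg(K')+\ell$. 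Thus $(\ast)$ becomes $\deg(K')+\ell-|S|\le r'\mu(K)$, which follows, when $K$ is semistable, from $\deg(K')\le r'\mu(K)$ (pass to the saturation of $K'$ inside $K$) together with $\ell-|S|\le 0$.

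The stability analogue is handled by tracking strictness. In the case $V'\subset K$ with $V'\ne 0,K$, strict inequality in $(\ast)$ is exactly stability of $K$ on proper subsheaves. For $V'\not\subset K$ with $r'<r$, stability of $K$ applied to the saturation of $K'$ inside $K$ gives a strict bound $\deg(K')<r'\mu(K)$, combining with $\ell-|S|\le 0$ to yield strict inequality. For $V'\not\subset K$ with $r'=r$, the quotient $V/V'$ is a non-zero torsion sheaf (since $V'\ne V$), and a short numerical bookkeeping shows strict inequality in $(\ast)$ automatically, independent of $K$.

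The main—mild—obstacle is the second case, where $V'$ straddles $K$: one must simultaneously use the bound $\deg(K')\le r'\mu(K)$ coming from (semi)stability of $K$ (applied via saturation) and the error estimate $\ell\le|S|$ coming from surjectivity of $f$. Once the key numerical identity $\deg(K)=\deg(V)-|S|$ is in hand, the rest is careful bookkeeping around subsheaves, saturations, and lengths of torsion quotients.
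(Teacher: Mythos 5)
Your argument is correct and is essentially the paper's own proof: the forward direction restricts the framed inequality to subsheaves of $\ker(f)$ (where the induced framing vanishes), and the converse compares a general subsheaf $V'$ with $V_1'=V'\cap\ker(f)$ using $\mathrm{length}(V'/V_1')\le|S|=\delta$ together with semistability of $\ker(f)$. Your additional bookkeeping for the strict inequalities is fine and is consistent with the paper's convention (stated in a later remark) of excluding $\ker(f)$ itself as a test object when checking stability.
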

\begin{proof} Suppose that $(V,f)$ is $\delta$-(semi)stable. Notice that $$\deg(\ker(f)) = \deg(V) - \chi(V) \delta.$$ Now for any subbundle $W \subset \ker(f)$ we have $\chi(W) =0$. So $\ker(f)$ is (semi)stable follows by Definition \ref{mainss}.

Conversely suppose that $\ker(f)$ is semi-stable. Then for any subsheaf $W$ of $V$, let $W_1 = \ker(W \ra S)$. Now $V$ is $\delta$-(semi)stable follows from 
\begin{eqnarray*}
\frac{\deg(W) - \chi(W) \delta}{\rank(W)} \leq \frac{\deg(W_1) - \chi(W_1) \delta}{\rank(W_1)} = \frac{ \deg(W_1)}{\rank(W_1)} \\ (\leq) \frac{\deg(\ker(f))}{\rank(\ker(f)} = \frac{\deg(V) - \chi(V) \delta }{\rank(V)}.
\end{eqnarray*}
\end{proof}

\begin{rem} Since a vector bundle is $\Gamma$-semi-stable if and only if it is semi-stable, hence we get that a $\Gamma$-framed module over a curve is $\Gamma$-semi-stable if and only if the it is semi-stable for $\delta=1$.
\end{rem}
\begin{prop} \label{tenprod} Let $f_1: V_1 \ra S_1$ and $f_2: V_2 \ra S_2$ are $\delta=|S_1|$ (resp $|S_2|$) (semi)-stable framed modules where $f_1$ and $f_2$ are surjective. Then $(V_1,f_1,S_1) \otimes (V_2,f_2,S_2)$ is $ |S|$ (semi)-stable (here $S$ is as in Definition \ref{ssforfm}).
\end{prop}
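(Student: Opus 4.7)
The plan is to reduce the claim to the classical fact that the tensor product of (semi)stable vector bundles on a smooth projective curve over $\CC$ is (semi)stable, by applying Proposition \ref{kerstable} in both directions. Write $F = f_1 \otimes \Id_{V_2} + \Id_{V_1} \otimes f_2 : V_1 \otimes V_2 \ra S$ for the tensor-product framing. Since $f_1$ and $f_2$ are surjective with $\delta_i = |S_i|$ (semi)stable, Proposition \ref{kerstable} gives that $\ker(f_1)$ and $\ker(f_2)$ are (semi)stable vector bundles in the usual sense.

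The key computation is to identify $\ker(F)$. Because $V_1$ and $V_2$ are locally free on the curve, tensoring is exact, so from $0 \ra \ker(f_i) \ra V_i \ra S_i \ra 0$ we obtain $\ker(f_1 \otimes \Id_{V_2}) = \ker(f_1) \otimes V_2$ and $\ker(\Id_{V_1} \otimes f_2) = V_1 \otimes \ker(f_2)$ as subsheaves of $V_1 \otimes V_2$. A brief diagram chase, comparing $V_1 \otimes V_2 / (\ker(f_1) \otimes V_2) \simeq S_1 \otimes V_2$ with $V_1 \otimes \ker(f_2) / (\ker(f_1) \otimes \ker(f_2)) \simeq S_1 \otimes \ker(f_2) \hookrightarrow S_1 \otimes V_2$, shows that the intersection is exactly $\ker(f_1) \otimes \ker(f_2)$. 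Hence $\ker(F) = \ker(f_1) \otimes \ker(f_2)$. Next, I verify that $F$ surjects onto the fibered product $S$: given $(a,b) \in S$, lift $a \in S_1 \otimes V_2$ through $f_1 \otimes \Id_{V_2}$ to some $w_1 \in V_1 \otimes V_2$ with $F(w_1) = (a,c)$; the compatibility condition defining $S$ forces $(c-b) \in V_1 \otimes S_2$ to lie in $\ker(f_1 \otimes \Id_{S_2}) = \ker(f_1) \otimes S_2$, and lifting through $\Id \otimes f_2$ produces $w_2 \in \ker(f_1) \otimes V_2$ with $F(w_2) = (0, c-b)$, so that $F(w_1 - w_2) = (a,b)$.

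With $F$ surjective and $\ker(F) = \ker(f_1) \otimes \ker(f_2)$ (semi)stable by the classical tensor-product theorem for bundles on curves, the ``if'' direction of Proposition \ref{kerstable} applied to the surjective framed module $F : V_1 \otimes V_2 \ra S$ with $\delta = |S|$ yields that the tensor product framed module is $|S|$-(semi)stable, as required. The only substantive step is the identification of $\ker(F)$; verifying surjectivity of $F$ onto $S$ and appealing to Proposition \ref{kerstable} are then routine. The invocation of the tensor-product theorem in characteristic zero replaces the more delicate direct verification of the semi-stability inequality for all sub-framed-modules of $V_1 \otimes V_2$, which would be the main obstacle if one attempted a head-on argument.
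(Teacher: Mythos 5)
Your strategy coincides with the paper's: prove the tensor framing $F=f_1\otimes\Id_{V_2}+\Id_{V_1}\otimes f_2$ is surjective onto $S$, identify $\ker(F)$ with $\ker(f_1)\otimes\ker(f_2)$, and conclude via Proposition \ref{kerstable} and the tensor-product theorem for semistable bundles. The surjectivity argument is fine (modulo the harmless point that $\ker(f_1\otimes\Id_{S_2})$ is the \emph{image} of $\ker(f_1)\otimes S_2$ in $V_1\otimes S_2$, which is all the lifting step needs). The gap is in the kernel identification. You assert $S_1\otimes\ker(f_2)\hookrightarrow S_1\otimes V_2$; but tensoring $0\to\ker(f_2)\to V_2\to S_2\to 0$ with $S_1$ gives
$$0=\Tor_1(S_1,V_2)\to \Tor_1(S_1,S_2)\to S_1\otimes\ker(f_2)\to S_1\otimes V_2,$$
so that map has kernel $\Tor_1(S_1,S_2)$, which is nonzero exactly when $\supp(S_1)\cap\supp(S_2)\neq\emptyset$ --- and in the intended application (Proposition \ref{2ndsymresult}) one takes $S_1=S_2=\Shalf$, so the supports coincide. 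Concretely, at a local ring $R$ of a point of the curve with uniformizer $t$, take $V_1=V_2=R$ and $f_1=f_2\colon R\to R/t$ the quotient map: then $\ker(F)=(t)\cap(t)=(t)$, while the image of $\ker(f_1)\otimes\ker(f_2)=tR\otimes_R tR$ in $V_1\otimes V_2=R$ is $(t^2)$. In general one gets a short exact sequence $0\to\ker(f_1)\otimes\ker(f_2)\to\ker(F)\to\Tor_1(S_1,S_2)\to 0$, so $\ker(F)$ is a strictly larger sheaf (a Hecke modification) wherever the supports meet. Since a sheaf containing a semistable bundle of the same rank with torsion quotient need not be semistable (e.g.\ $\cO\oplus\cO\subset\cO\oplus\cO(p)$), the appeal to Proposition \ref{kerstable} no longer closes the argument, and the extra length $|\Tor_1(S_1,S_2)|$ also upsets the degree bookkeeping.

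In fairness, you have reproduced rather than invented this gap: the paper's own proof asserts that ``$\Tor_1(S_1,V_2)\to\Tor_1(S_1,S_2)$ is an isomorphism'' and uses it to modify the lift $v_1$, but the source of that map vanishes ($V_2$ is locally free) while the target does not, and the lift of $v$ to $\ker(f_1)\otimes V_2$ is in fact unique, so there is nothing to alter. To repair the statement one must either assume $\supp(S_1)\cap\supp(S_2)=\emptyset$ (which excludes the symmetric-square case the paper needs) or argue directly that the modification $\ker(F)$ of $\ker(f_1)\otimes\ker(f_2)$ along $\Tor_1(S_1,S_2)$ remains semistable; neither your write-up nor the paper does this.
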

\begin{proof}
Firstly, it follows that $\ker(f_1)=K_1$ and $\ker(f_2)=K_2$ are (semi)-stable vector bundles by Propositions \ref{kertor} and \ref{kerstable}.
Now we show that 
\begin{equation} \label{checktenprod}
\ker(f_1 \otimes \Id_{V_2} + \Id_{V_1} \otimes f_2)=K_1 \otimes K_2. 
\end{equation} The inclusion $\subset$ is clear. For the other inclusion, consider the following diagram and let $v \in V_1 \otimes V_2 $ be in the kernel. Consider the following diagram where all the horizontal arrows on the right are surjective.
\begin{tiny}
\begin{equation*}
\xymatrix{
 & & & 0=Tor_1(V_1,K_2) \ar@{>>}[r] \ar[d] & 0=Tor_1(S_1,K_2)  \ar[d] \\
   & & &Tor_1(V_1,V_2) \ar@{>>}[r] \ar[d] & Tor_1(S_1,V_2)  \ar[d]  \\
  & & Tor_1(K_1,S_2)=0 \ar[d] \ar[r] & Tor_1(V_1,S_2) \ar@{>>}[r] \ar[d] & Tor_1(S_1,S_2)  \ar[d]  \\
  0 =Tor_1(V_1,K_2) \ar[r] \ar[d] & 0= Tor_1(S_1,K_2) \ar[d] \ar[r] & K_1 \otimes K_2 \ar[d] \ar[r] & V_1 \otimes K_2 \ar@{>>}[r] \ar[d] & S_1 \otimes K_2  \ar[d]  \\
  Tor_1(V_1,V_2) \ar[r] \ar@{>>}[d] & Tor_1(S_1,V_2) \ar[r] \ar@{>>}[d] & K_1 \otimes V_2 \ar[r] \ar@{>>}[d] &  V_1 \otimes V_2 \ar@{>>}[r] \ar@{>>}[d] & S_1 \otimes V_2  \ar@{>>}[d]  \\
 Tor_1(V_1,S_2) \ar[r]  & Tor_1(S_1,S_2) \ar[r]  & K_1 \otimes S_2 \ar[r]  & V_1 \otimes S_2 \ar@{>>}[r]  & S_1 \otimes S_2  
}
\end{equation*}
\end{tiny}

Then $v \in V_1 \otimes V_2$ comes from an element $v_1$ in $K_1 \otimes V_2$. The image of $v_1$ in $K_1 \otimes S_2$ comes from an element $v_2 \in Tor_1(S_1, S_2)$. Now since $K_1$ and $K_2$ are torsion free by Proposition \ref{kertor}, so the morphism $Tor(S_1,V_2) \ra Tor_1(S_1,S_2)$ is an isomorphism. Now one can lift $v_2$ to $Tor_1(S_1,V_2)$ and use it to  alter $v_1$ so that $v_1$ still lifts $v$ but its image in $K_1 \otimes S_2$ is zero i.e $v_1$ comes from an element in $K_1 \otimes K_2$ as desired.

By the tensor product theorem for vector bundles, it follows that $K_1 \otimes K_2$ is (semi)-stable. Now by Proposition \ref{kerstable}, it follows that $(V_1,f_1,S_1) \otimes (V_2,f_2,S_2)$ is (semi)-stable.
\end{proof}

\begin{defi} \label{2ndsym} Let $f: W \ra S_W$ be a framed module. Following notation of \ref{tensorprod}, we define its {\it second symmetric product} as $f_1: \Sym^2 W \ra S_1$ where $f_1$ is the restriction of $f \otimes \Id_W \oplus \Id_W \otimes f$ to $\Sym^2W \hra W \otimes W$ and $S_1 \subset S$ consisting of fixed points of the involution $i$ on $S$ induced by the exchange of $W \otimes S$ with $S \otimes W$. More precisely on $S_W \otimes W \oplus W \otimes S_W$ consider the involution $I$ defined on indecomposable  tensors as $s_1 \otimes w_1 + w_2 \otimes s_2 \ms s_2 \otimes w_2 + w_1 \otimes s_1$. We have $I_S =i$. Similarly we define the {\it second wedge product} as $f_\Lambda: \Lambda^2 W \ra S_\Lambda$ where $S_\Lambda \subset S$ consists of the $-1$-eigenspace of $i$-action on $S$. 
\end{defi} 

\begin{prop} \label{2ndsymresult} If $f: W \ra S$ is a $\delta=1$ semi-stable framed module, then in the notation of Definition \ref{2ndsym}, $f_1: \Sym^2W \ra S_1$ is also $\delta=1$ semi-stable. Further $\ker(f_1)= \Sym^2 \ker(f)$. Similarly $f_\Lambda: \Lambda^2 W \ra S_\Lambda$ is also $\delta=1$-semi-stable and $\ker(f_\Lambda)=\Lambda^2 \ker(f_\Lambda)$.
\end{prop}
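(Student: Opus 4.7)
The plan is to reduce the statement to semi-stability of plain vector bundles through Proposition \ref{kerstable}. Since $(W,f)$ is $\delta=1$ semi-stable, Proposition \ref{delta=1surj} gives that $f$ is surjective, and then Proposition \ref{kerstable} yields that $K := \ker(f)$ is a semi-stable vector bundle on the curve. By the tensor product theorem, $K \otimes K$ is semi-stable. In characteristic zero we have a decomposition $K \otimes K = \Sym^2 K \oplus \Lambda^2 K$, and an easy Chern-class computation shows $\mu(\Sym^2 K) = \mu(\Lambda^2 K) = 2\mu(K) = \mu(K \otimes K)$, so both direct summands inherit semi-stability (a subbundle of $\Sym^2 K$ is in particular a subbundle of $K \otimes K$, with slope bounded by $\mu(K \otimes K) = \mu(\Sym^2 K)$). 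Thus once we establish that (i) $f_1$ and $f_\Lambda$ are surjective, and (ii) $\ker(f_1) = \Sym^2 K$ and $\ker(f_\Lambda) = \Lambda^2 K$, a second application of Proposition \ref{kerstable} will give the desired $\delta=1$ semi-stability of $f_1$ and $f_\Lambda$.

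For (ii), recall from equation (\ref{checktenprod}) in the proof of Proposition \ref{tenprod} that $\ker(f \otimes \Id_W + \Id_W \otimes f) = K \otimes K$ inside $W \otimes W$. The exchange involution $\sigma$ on $W \otimes W$ preserves $K \otimes K$ and decomposes both $K \otimes K$ and $W \otimes W$ into $\pm 1$-eigenspaces $\Sym^2 \oplus \Lambda^2$. Intersecting $K \otimes K$ with $\Sym^2 W$, respectively with $\Lambda^2 W$, inside $W \otimes W$ therefore yields $\ker(f_1) = \Sym^2 K$ and $\ker(f_\Lambda) = \Lambda^2 K$; the latter is what the statement intends.

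For (i), first observe that the tensor framing $f \otimes \Id_W + \Id_W \otimes f \colon W \otimes W \to S$ is itself surjective: any compatible pair $(a,b) \in S \subset S_W \otimes W \oplus W \otimes S_W$ can be lifted by choosing preimages under $f$ in each factor. This framing is equivariant with respect to $\sigma$ on the source and the involution $i$ on the target, as one checks directly on decomposable tensors using Definition \ref{2ndsym}: $\sigma(v \otimes w) = w \otimes v$ is sent to $(f(w) \otimes v,\, w \otimes f(v))$, which is precisely $I$ applied to $(f(v) \otimes w,\, v \otimes f(w))$. Hence any $\xi \in S_1 = S^i$ can be lifted to some $u \in W \otimes W$, and replacing $u$ by the symmetrization $\tfrac{1}{2}(u + \sigma u) \in \Sym^2 W$, valid in characteristic zero, produces a preimage inside $\Sym^2 W$ mapping to $\xi$. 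Antisymmetrization handles the case of $\Lambda^2$ in the same way.

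The one subtle point is the bookkeeping with the target module $S \subset S_W \otimes W \oplus W \otimes S_W$ from Definition \ref{ssforfm} and the check of compatibility between $i$ on $S$ and $\sigma$ on $W \otimes W$, which is needed both for the equivariance argument in (i) and to know that $S_1$ (resp. $S_\Lambda$) is the correct receptacle; once this equivariance is in hand, every remaining step is a direct application of Propositions \ref{delta=1surj} and \ref{kerstable}, the tensor product theorem for semi-stable bundles, and equation (\ref{checktenprod}).
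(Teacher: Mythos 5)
Your proposal is correct and follows essentially the same route as the paper's proof: reduce to Proposition \ref{kerstable} by checking that $f_1$ (resp.\ $f_\Lambda$) is surjective with kernel $\Sym^2\ker(f)$ (resp.\ $\Lambda^2\ker(f)$), and then invoke semi-stability of the symmetric/exterior square of the semi-stable bundle $\ker(f)$. The paper leaves the surjectivity and kernel identifications as ``easy checks,'' whereas you supply them explicitly via the $\sigma$-$i$ equivariance, symmetrization in characteristic zero, and the eigenspace decomposition of $K\otimes K$ inside $W\otimes W$ from (\ref{checktenprod}); this is a welcome elaboration, not a different argument.
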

\begin{proof} It is easy to see that the framing map $f_1$ is actually surjective. We can further check that if $V=\ker(f)$ then $\Sym^2V= \ker(f_1)$. Now since $\delta=1$, so $V$ is a semi-stable vector bundle. Thus so is $\Sym^2V$. This implies that for  $\delta=1$, $f_1: \Sym^2W \ra S_1$ is semi-stable by Proposition \ref{kerstable}. The last assertion follows from (\ref{checktenprod}).
\end{proof}

\section{Construction of coarse moduli space of $\Gamma$-quadratic bundles with frames} \label{construction}
In this section, it will be convenient to normalize $\delta$ by dividing it by the length of the sky-scraper sheaf $|S|$. Thus the moduli is empty if $\delta>1$.

\begin{rem} With $\delta=1$, for any framed module $f: W \ra \Shalf$, $\ker(f)$ is always destabilizing. But in checking {\it stability} we shall ignore this sub-sheaf.
\end{rem}

Now we specialize $p: Y \ra X$ to be a Galois cover of smooth projective {\it curves}.

\begin{defi} \label{ssdeforfs} Let $(W,q')$ be a $\Gamma$-quadratic bundle on $Y$ with a surjective homomorphism $f: W \ra \Shalf$ to a sky-scraper sheaf $\Shalf$. We shall say that $(W,q',f)$ is $\delta$-semi-stable if the underlying framed module $f: W \ra \Shalf$ is $\delta$-semi-stable as a framed module.

\end{defi}

\begin{rem} This definition of (semi)-stability above is a direct generalization of the definition stated in Huybrechts-Lehn \cite[HL]{hl} and Seshadri \cite[CSS]{css}. 
\end{rem}

\begin{Cor} For $\delta=1$, if $|S|$ and $\rank(W)$ are coprime then semi-stable points are stable too.
\end{Cor}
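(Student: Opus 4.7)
The plan is to reduce the statement to the classical fact that a semistable vector bundle of coprime rank and degree on a smooth projective curve is automatically stable.

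By Proposition \ref{delta=1surj}, the framing $f$ is surjective, so Proposition \ref{kerstable} (whose un-normalized $\delta = |\Shalf|$ matches the normalization $\delta = 1$ in this section) identifies $\delta=1$-(semi)stability of the $\Gamma$-framed module $(W,f)$ with (semi)stability of the vector bundle $\ker(f)$ in the usual sense. The $\Gamma$-action on $W$ restricts to $\ker(f)$, and $\Gamma$-semistability of a vector bundle coincides with ordinary semistability (as noted before Proposition \ref{tenprod}), so this passage is harmless.

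By the standing convention of the Notation Remark, $\deg W = 0$, so $\rank \ker(f) = \rank W$ and $\deg \ker(f) = \deg W - |\Shalf| = -|S|$. The hypothesis $\gcd(\rank W, |S|) = 1$ is therefore exactly the condition that $\ker(f)$ has coprime rank and degree. For any saturated sub-bundle $K' \subset \ker(f)$ with $\mu(K') = \mu(\ker(f))$, one has $\rank(W)\,\deg(K') = -\rank(K')\,|S|$; the coprimality of $\rank(W)$ and $|S|$ forces $\rank(W) \mid \rank(K')$, hence $\rank(K') \in \{0, \rank(W)\}$, giving $K' = 0$ or $K' = \ker(f)$. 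Thus $\ker(f)$ is stable, and applying Proposition \ref{kerstable} in reverse, $(W,f)$ is $\delta=1$-stable (the sub-sheaf $\ker(f) \subset W$ itself being the one excluded by the Remark preceding the Corollary).

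The main, and in fact rather small, obstacle is simply a bookkeeping verification: one must be sure that sub-modules $E' \subset W$ not contained in $\ker(f)$ are also ruled out as destabilizers. But this is precisely what Proposition \ref{kerstable}'s proof handles, by reducing the framed-module inequality for $E'$ to the vector-bundle inequality for $E' \cap \ker(f) \subset \ker(f)$, and so is automatic once $\ker(f)$ has been shown stable by the coprimality argument above.
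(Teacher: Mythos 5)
Your argument is correct and is the natural (surely intended) one: the paper states this corollary without proof, and your route through Propositions \ref{delta=1surj} and \ref{kerstable} reduces it to the classical fact that a semistable bundle of coprime rank and degree is stable, applied to $\ker(f)$ with $\rank(\ker f)=\rank(W)$ and $\deg(\ker f)=-|S|$. You also correctly handle the two points that need care — the exclusion of $\ker(f)$ itself as a test object per the preceding Remark, and the fact that ordinary stability of $\ker(f)$ suffices for $\Gamma$-stability — so nothing is missing.
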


In this section we are interested in constructing the coarse moduli space for the functor $\cF_C^{\Shalf,ss}$ for $\delta=1$ which parametrizes equivalence classes of semi-stable objects in $\cF_C^{\Shalf}$.

\subsection{Functorial properness of the evaluation map}
 In this section we prove a `$\Gamma$-equivariant analogue for quadratic-framed-modules of Proposition 2.8 of \cite[]{bs}' by evaluating at many points.
 
In the following proposition we take $G=\GL_n$ and $H=O_n$. Notice that for any $\delta$ a semi-stable framed module $f: W \ra \Shalf$ cannot admit non-zero maps from line bundles of arbitrary degree. An explicit bound for degree is $\delta |\Shalf| (1 - \frac{1}{\rank})$. Similarly $f_1: \Sym^2W \ra S_1$ is semi-stable for $\delta=1$ if $f: W \ra \Shalf$ is so. So $\ker(f_1)^*$ is a semi-stable vector bundle of positive degree. Thus again it is possible to give the bound of $\frac{\deg(\ker(f_1)^*)}{\rank(\ker(f_1))}$ such that $\Sym^2W^*$ does not admit non-zero maps from line bundles of higher degree. We call such an integer $m$ in the proposition below.

\begin{prop} \label{piFgh} Let $p: Y \ra X$ be a Galois cover of smooth projective curves with Galois group $\Gamma$. Let $F_{H,G}: \{Schemes \}  \ra  \{ Sets \}$ be the functor that to a scheme $T$ associates the set of isomorphism classes of pairs $(f_T: W_T \ra p_T^*\Shalf,s_T)$: where $f_T:W_T \ra p_T^*\Shalf = \{f_t: W_t \ra \Shalf \}_{t \in T}$  is a $T$-family of semi-stable $\Gamma$-framed modules on $Y$  and $s_T$  is a $\Gamma$-equivariant section of $\Sym^2W_T^* \ra Y \times T^*$ that is everywhere non-degenerate. In other words, it is an orthogonal bundle on $Y \times T^*$.

Let  $\cR \subset X$ be a collection of $m$ point outside of the branch locus of $p: Y \ra X$ and the support of $\Shalf$. Let $E(G/H)_\cR  \ra Y_\cR \times T$ be defined by the following diagram

\begin{equation} \xymatrix{\Sym^2W^*_\cR  \ar[r] \ar[d] & \Sym^2W^*_T \ar[d] \\ Y_\cR \times T \ar[r] \ar[d] & Y \times T \ar[d] \\ \{\cR \} \times T \ar[r] & X \times T } \end{equation}

Let $ F_{H,G,\cR}: \{Schemes \}  \ra  \{ Sets \}$ be the functor that to a scheme $T$ associates the set of isomorphism classes of pairs $(f_T:W_T \ra p_T^*\Shalf,\sigma_\cR)$ where $f_T: W_T \ra p_T^*\Shalf =\{f_t: W_t \ra \Shalf \}_{t \in T}$  is a $T$-family of $\delta$-semi-stable framed modules on $Y$ and $\sigma_\cR$ is a $\Gamma$-equivariant section of   $\Sym^2W^*_\cR  \ra Y_\cR \times T$ that is everywhere non-degenerate. In other words, it is an orthogonal bundle on $Y_\cR \times T$.

Then the morphism $\alpha_\cR: F_{H,G} \ra F_{H,G,\cR}$  induced by evaluation of sections at $Y_\cR$ is a proper morphism of functors. \end{prop}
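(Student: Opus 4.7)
The plan is to verify the valuative criterion of properness for $\alpha_\cR$. Let $A$ be a discrete valuation ring with uniformizer $\pi$, fraction field $K$, and residue field $k$; one is given a $K$-point $(f_K, s_K)$ of $F_{H,G}$ whose image under $\alpha_\cR$ extends to an $A$-point $(f_A, \sigma_\cR)$ of $F_{H,G,\cR}$, and the task is to produce the unique $\Gamma$-equivariant, everywhere non-degenerate section $s_A$ of $\Sym^2 W_A^*$ on $Y \times A$ restricting to $s_K$ on $Y \times K$ and to $\sigma_\cR$ on $Y_\cR \times A$. Uniqueness is immediate: $\Sym^2 W_A^*$ being $A$-flat, two $A$-sections agreeing on the generic fibre coincide, and $\Gamma$-equivariance of any such extension is inherited from that of $s_K$.

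For existence, flat base change yields $H^0(Y \times K, \Sym^2 W_K^*) = H^0(Y \times A, \Sym^2 W_A^*) \otimes_A K$, so there is a unique minimal integer $n \geq 0$ with $s_A' := \pi^n s_K$ a section of $\Sym^2 W_A^*$ over $Y \times A$, and by minimality $\bar s := s_A'|_{Y \times k} \neq 0$. Since $H^0(Y_\cR \times A, \Sym^2 W_A^*|_{Y_\cR})$ is a finitely generated free $A$-module, the equality $s_A'|_{Y_\cR \times A} = \pi^n \sigma_\cR$ --- valid after inverting $\pi$ since both sides restrict to $\pi^n s_K|_{Y_\cR \times K}$ on the generic fibre --- already holds over $A$. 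When $n = 0$ one sets $s_A = s_K$; so everything reduces to ruling out $n \geq 1$.

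This is the principal obstacle. If $n \geq 1$, then $s_A'|_{Y_\cR \times k} = 0$, so the non-zero $\Gamma$-invariant section $\bar s$ of $\Sym^2 W^*$ on the special fibre vanishes along the finite subscheme $Y_\cR \subset Y$, producing a non-zero inclusion $\cO_Y(Y_\cR) \hookrightarrow \Sym^2 W^*$ of degree $|\Gamma| \cdot |\cR|$. Now Proposition \ref{2ndsymresult} applied to the $\delta = 1$ semi-stable framed module $f: W \to \Shalf$ on the special fibre ensures that $\ker(f)$, and hence $\Sym^2 \ker(f)$, are semi-stable vector bundles; dualising the inclusion $\ker(f) \hookrightarrow W$ (with sky-scraper cokernel $\Shalf$) gives a pointwise injection $\Sym^2 W^* \hookrightarrow \Sym^2 \ker(f)^*$. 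Hence any line sub-bundle of $\Sym^2 W^*$ has degree at most $\mu(\Sym^2 \ker(f)^*) = -2\mu(\ker(f))$, which is precisely the integer $m$ fixed in the paragraph preceding the proposition; since the discrete invariants of the $\delta=1$ semi-stable family are fixed, this bound is uniform over $T$. Choosing $\cR$ so that $|\Gamma| \cdot |\cR|$ strictly exceeds $m$ contradicts the existence of this sub-bundle, forcing $n = 0$.

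With $n = 0$, non-degeneracy of $s_A$ over all of $Y \times A$ follows from a determinantal argument. The section $\det(s_A)$ of $(\det W_A^*)^{\otimes 2}$ has zero divisor disjoint from $Y \times K$ (because $s_K$ is non-degenerate) and from $Y_\cR \times A$ (because $\sigma_\cR$ is non-degenerate), so it is supported in $(Y \setminus Y_\cR) \times \{0\}$. But any non-zero effective divisor on the regular surface $Y \times A$ supported in the special fibre must contain the whole fibre $Y \times \{0\}$ and in particular meet $Y_\cR \times \{0\}$ --- a contradiction. Hence $\det(s_A)$ is nowhere vanishing, $s_A$ is everywhere non-degenerate, and the valuative criterion is established.
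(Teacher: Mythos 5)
Your proof is correct, and its core coincides with the paper's: both clear the pole of the section along the special fibre by multiplying with a power of the uniformizer, observe that the resulting non-zero section of $\Sym^2 W^*$ on the special fibre would then have to vanish at every point of $Y_\cR$, and rule this out because $\Sym^2 W^*$ injects into the semi-stable bundle $\Sym^2\ker(f)^*$, whose slope bounds the degree of any line bundle mapping non-trivially into it --- which is exactly how the integer $m$ was calibrated before the proposition. (The paper runs the argument over a reduced affine curve $T$ rather than a DVR, and establishes $\Gamma$-equivariance of the extension via a separatedness argument instead of your density remark; these differences are cosmetic.) Where you genuinely diverge is the final step, non-degeneracy of the extended form on the special fibre. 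The paper asserts that $W_T(\GL_n/O_n)\hookrightarrow \Sym^2 W_T$ is a \emph{closed} embedding and concludes that the limit of the section still lands in it; as literally stated this is shaky, since the non-degenerate forms are an open, not closed, subset of the symmetric square. Your determinantal argument --- the zero divisor of $\det(s_A)$ misses both the generic fibre and $Y_\cR\times\Spec(A)$, hence, being a vertical divisor on $Y\times\Spec(A)$ whose special fibre is irreducible, must be empty --- actually exploits the non-degeneracy of $\sigma_\cR$ on the closed fibre (data carried by $F_{H,G,\cR}$ that the paper's $k=0$ step never invokes), and is the cleaner, more self-contained way to finish. The only microscopic imprecision is calling $\Sym^2 W^*\hookrightarrow\Sym^2\ker(f)^*$ a ``pointwise'' injection --- it is only a sheaf injection, with torsion cokernel supported on $\supp(\Shalf)$ --- but sheaf injectivity is all your slope bound needs.
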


\begin{proof} Let $T$ be a reduced affine curve and $p \in T$ be an arbitrary point. We denote $T^* = T \setminus \{ p \}$ denote the complement of the point $p$. We should show that given a $T$-family of $\delta$-semi-stable framed modules  $f_T: W_T \ra p_T^*\Shalf$  on $Y \times T$, and a $\Gamma$-equivariant every-where non-degenerate section $s_{T^*}$ of $\Sym^2W^*_{T^*}  \ra Y \times T^*$ together with a $\Gamma$-equivariant everywhere non-degenerate section $\sigma_T$ of $\Sym^2W^*_\cR \ra Y_\cR \times T$ such that for all $p_1 \in T^*$ we have $$s_{T^*} ( \{ y \} \times \{ p_1 \} ) = \sigma_T (\{ y \} \times \{ p_1 \}) \forall y \in Y_\cR $$ then there exists a $\Gamma$-equivariant section $s_T$ of $\Sym^2W^*_T \ra Y \times T$ prolongating $s_{T^*}$.

Now $s_{T^*}$ gives rise to a section of $\Sym^2 W^*_{T^*} \ra Y \times T^*$ which if it doesn't extend to a regular section $s'_T$ on $Y \times T$ would have a pole of order $k \geq 1$ at the divisor $Y \times \{ p \} \hra Y \times T$. Let $u$ denote the uniformizing parameter at the local ring $\cO_{T, p}$. Then $u^k s_{T^*}$ extends to a regular section (again denoted) $s'_T$ of $\Sym^2W^*_T \ra Y \times T$ whose restriction to $s'_{T, p}$ would be non-zero. Since $\Sym^2W^*_p \hra \ker(f_1)$ is semi-stable of though of positive degree so $s'_{T,p}$ vanishes at finitely many points in the second case. 

\begin{lem} The section $s'_T$ is $\Gamma$-equivariant. \end{lem} \begin{proof} In this proof, we shall view $W_T$ as a principal $\GL_n$-bundle. Since the property of separatedness is local on the base,  the structural map $W_T \ra Y \times T$ is separated because in the Zariski topology, it is a local $\GL_n$-fibration. Therefore $W_T(\GL_n/O_n) \ra Y \times T$ is also separated being a $\GL_n/O_n$-fibration Zariski locally. Thus the structural morphism of the $|\Gamma|$-fold fibered product of $W_T(\GL_n/O_n)$ with itself over $Y \times T$ is also separated. Let $\{ m_g \}_{g \in \Gamma}$ denote the lift of the action of $\Gamma$ on $Y $ to $E$ and let $\ol{m_g}$ denote the induced lifts on $E_T(\GL_n/O_n)$. Now since $Y \times T$ is reduced and since the section $(\ol{m_g}^{-1} s'_T g)_{g \in \Gamma}$ and the diagonal section $(s'_T)_{g \in \Gamma}$ agree on $Y \times T^*$ which is open and dense, so they agree on $Y \times T$. In particular, $s'_{T,p}$ on $Y \times \{ p \}$ is $\Gamma$-equivariant. \end{proof}
  
If the order of the pole $k$ were strictly greater than zero, then by continuity for all $p_1 \in T$, we would have $$ s'_T(\{ y \} \times \{p_1\}) = u^k \sigma_T (\{y \} \times \{ p_1 \}) \forall y \in Y_\cR.$$ This equality would imply that for $y \in Y_\cR$, we have $ s'_T(\{ y \} \times \{p \}) = u^k \sigma_T (\{y \} \times \{ p \})=0$  because $u$ vanishes at $p$ and $\sigma_T$ is regular. This is a contradiction owing to the size of $\cR$ that we took. So it must be that $s_{T^*}$ extends as a regular section to $s'_T$. Now since $W_T(\GL_n/O_n) \hra \Sym^2W_T$ is a closed embedding, so the entire image of $s_T$ lies in $W(\GL_n/O_n)$. In particular, the image of $s'_{T,p}$ lies in $W(\GL_n/O_n)_p$. It is also $\Gamma$-equivariant. This is what was required to be shown. \end{proof}

\begin{rem} \label{extfuncpropofev} For the case of quadratic bundles, the same proof as above also shows the properness of the evaluation of the quadratic form at sufficiently many points. We omit the proof.
\end{rem}

\subsection{Construction of quotient space for $\delta=1$}
Huybrechts-Lehn \cite[HL]{hl} have considered the closed sub-scheme of $Z'$ of $$\Quot_Y(H \otimes  \cO_Y(-m), P) \times \Proj_Y(\Hom(H, H^0(X, \Shalf)))$$  such that for $(q,p) \in Z'$ the following factorization of arrows takes place
\begin{equation}
\xymatrix{
H \otimes \cO_Y(-m) \ar[r]^q \ar[d]^p & \cF_q \ar@{-->}[ld] \\
\Shalf
}
\end{equation}
To simplify notation we shall denote $Z'$ the  locus stable under $\Gamma$-action in the description above. Note that over $Z'$ we have a universal map $f_{univ}: \cF \ra \Shalf$.
 
Let $f: W \ra \Shalf \in Z'^{ss}$ and consider $\ker(f)$. Then by the boundedness of the family $Z'^{ss}$ it follows that there exists an integer $m_0$ (independent of $\ker(f)$) such that for any non-zero section $s$ of $\Sym^2\ker(f)^*$, we have
$ |zeros(s)| < m_0.$

In the following we fix a $\Gamma$-invariant subset $J \subset Y$ outside of ramification locus of $p: Y \ra X$ and the support of $S$ having cardinality $|J|$ greater than $ m_0$.
Let $\cE$ denote the universal sheaf $\ker(f_{univ})$ on $Y \times Z'^{ss}$. It is a semi-stable vector bundle of negative degree by Propositions \ref{kerstable}. Let $\cE_G$ denote the associated principal $\GL_n$-bundle. We set
\begin{equation} Q' = (\Sym^2\cE^*)_J = \prod_{j \in J} \Sym^2\cE^*_{y_j} \ra Z'^{ss} \end{equation}
the fiber product taken for all $y_j \in J$ over $Z'^{ss}$. Let $f': Q' \ra Z'^{ss}$ be the natural map. It is an {\it affine} morphism because each $\Sym^2\cE^*_{y_j} \ra \{ y_j \} \times Z'^{ss} \stackrel{\simeq}{\ra} Z'^{ss}$ is affine being pull-back by $\{y_j\} \times Z'^{ss} \hra Y \times Z'^{ss}$ of $\Sym^2\cE^* \ra Y \times Z'^{ss}$, which being a vector bundle is affine. 

Note that $Q'$ parametrizes a $\delta=1$ semi-stable family of framed modules $f: W \ra \Shalf$ together with "initial values of the quadratic form" on $\Sym^2\ker(f)^*$ which is a semi-stable vector bundle of positive degree by Proposition \ref{2ndsymresult}.

Let $q'': \{ Schemes \} \ra \{ Sets \}$ be the functor that to a scheme $T$ associates 
$\{(f_T: W_T \ra p_T^* \Shalf, q_T) \}$ where $q_T$ is an everywhere non-degenerate quadratic form and $f_T$ is a $\delta=1$-semi-stable family of $\Gamma$-framed modules. We shall view $q_T$ equivalently as sections of $\Sym^2W_T^*$ and further as sections of $\Sym^2\ker(f_T)^*$. By the theory of Hilbert schemes (cf Lemma 3.8.1 \cite[]{ar}), and simple base-change we can show that $q''$ is representable by a $Z^{'ss}$ scheme $Q''$.

Let us define the evaluation map of $Z'^{ss}$-schemes by
$$ev_J: Q'' \ra Q', (f_T, q_T) \ms \{f_T, q(j) | j \in J \}.$$

\begin{lem} \label{lemma2} The evaluation map $\phi_J: Q'' \ra Q'$ is affine for $|J|$ large.
\end{lem}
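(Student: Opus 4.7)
The plan is to deduce affineness of $\phi_J$ from the stronger statement that $\phi_J$ is \emph{finite}, since any finite morphism is affine. I would verify separately that $\phi_J$ is proper and that its geometric fibres are set-theoretically singletons; Zariski's Main Theorem then yields finiteness.

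For properness, I would invoke Proposition~\ref{piFgh} together with its quadratic analogue Remark~\ref{extfuncpropofev}. Because $J$ was chosen outside $\Ram(p)$ and outside $\supp(\Shalf)$, the framing $f_{univ}$ vanishes at each $y_j \in J$, and hence $\cE_{y_j} = \ker(f_{univ})_{y_j} = W_{y_j}$. Consequently evaluating a quadratic form $q_T \in H^0(\Sym^2 W_T^*)$ at the points of $J$ lands canonically in $\prod_{j \in J} \Sym^2 \cE^*_{y_j}$, which is precisely the target of $\phi_J$. The valuative-criterion argument of Proposition~\ref{piFgh} then applies directly, producing a prolongation of any family over a punctured disc whose $J$-evaluation already extends.

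For fibre-injectivity, fix a geometric point of $Q'$ lying over some $(f: W \to \Shalf) \in Z'^{ss}$, carrying prescribed values $\{a_j\}_{j \in J}$. Two quadratic forms $q_1, q_2$ on $W$, both everywhere non-degenerate and both mapping to this point, satisfy $q_1(y_j) = q_2(y_j) = a_j$ for every $j$, so their difference is a global section of $\Sym^2 \ker(f)^*$ vanishing at the $|J|$ distinct points of $J$. By Propositions~\ref{kerstable} and~\ref{2ndsymresult} at $\delta = 1$, $\Sym^2 \ker(f)$ is semi-stable of negative degree, so $\Sym^2 \ker(f)^*$ is semi-stable of positive degree; the uniform bound $m_0$ supplied by the boundedness of $Z'^{ss}$ says that any non-zero global section of such a bundle has fewer than $m_0$ zeros. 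Since $|J| > m_0$, we conclude $q_1 = q_2$, and $\phi_J$ has singleton geometric fibres.

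The main difficulty I anticipate lies in transporting the proof of Proposition~\ref{piFgh} into the present framed-module setting: that proposition was written for families of orthogonal bundles on $Y$, not for families carrying a non-trivial framing $f: W \to \Shalf$. The identification $\cE|_J \simeq W|_J$ away from $\supp(\Shalf)$ should render the separatedness and $\Gamma$-equivariance arguments of Proposition~\ref{piFgh} applicable verbatim, and Remark~\ref{extfuncpropofev} already indicates that the passage to arbitrary quadratic forms poses no new obstacle; verifying these compatibilities carefully is the one technical step. Once done, the conjunction of properness and singleton fibres forces finiteness, and hence affineness, of $\phi_J$.
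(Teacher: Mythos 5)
Your proposal is correct and follows essentially the same route as the paper: injectivity on fibres from the vanishing bound for sections of the semi-stable positive-degree bundle $\Sym^2\ker(f)^*$ at $|J|>m_0$ points, properness from Proposition~\ref{piFgh} (which is already stated for framed families, so the compatibility you flag is not an issue), and then the standard fact that a proper injective (quasi-finite) morphism is finite, hence affine. The only cosmetic difference is that you name Zariski's Main Theorem where the paper simply asserts that proper plus injective implies affine.
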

\begin{proof} Suppose that $\phi_J(f_T,q_T) = \phi_J(f'_T,q_T')$. Then we may assume that $f_T: W_T \ra \Shalf = f'_T: W'_T \ra \Shalf$ and that $q$ and $q'$ are two different sections of $\Sym^2\ker(f_T)^*$ which agree on $J$. Since $\Sym^2\ker(f_T)^*$ is a family of semi-stable vector bundles, so it follows that $q=q'$. The properness follows from Prop \ref{piFgh}. Thus the evaluation map $\phi_J$ being proper and injective is affine.
\end{proof}

Let $\cG$ denote $\GL^{\Gamma}(H)$. By our definition of semi-stability of $\Gamma$-orthogonal bundles with frames, it is immediate that the $\cG$ action on $Z'^{ss}$ lifts to $Q''$. Further we have the commutative diagram
\begin{equation}
\xymatrix{
Q'' \ar[r]^{\phi_J} \ar[rd]^\mu & Q' \ar[d]^f \\
& Z'^{ss}
}
\end{equation}
By Lemma \ref{lemma2}, we have $\phi_J$ is affine. Further $f$ is affine because its fibers are $(\Sym^2U)^{J}$ where $U$ is any vector space of dimension the rank of the framed modules. Hence $\mu$ is a $\cG$-equivariant affine morphism.

\begin{prop} Two semi-stable $\Gamma$-quadratic framed modules $(f,q)$ and $(f',q')$ are in the same $\cG$-orbit of $Q''$ if and only if they are isomorphic.
\end{prop}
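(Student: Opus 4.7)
The ``$\Leftarrow$'' direction is immediate from the construction of the $\cG$-action on $Q''$: the action is induced from the $\cG$-action on $Z'^{ss}$ via the $\cG$-equivariant affine morphism $\mu$, so any two elements in a common $\cG$-orbit are tautologically isomorphic as $\Gamma$-quadratic framed modules. The content is the reverse direction.

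So suppose $\psi : (W,f,q) \xrightarrow{\sim} (W',f',q')$ is an isomorphism of $\Gamma$-quadratic framed modules. The plan is to manufacture the required $g \in \cG = \GL^\Gamma(H)$ from $\psi$ via global sections. By construction of $Z'^{ss}$, the underlying framed modules are realized as $\Gamma$-equivariant quotients
$$q_1 : H \otimes \cO_Y(-m) \twoheadrightarrow W, \qquad q_2 : H \otimes \cO_Y(-m) \twoheadrightarrow W',$$
together with framings $p_1,p_2 : H \to H^0(Y,\Shalf)$. For $m \gg 0$, Proposition \ref{3.2} tells us that each $q_i$ induces a $\Gamma$-equivariant isomorphism $H \xrightarrow{\sim} H^0(Y,W(m))$ respectively $H \xrightarrow{\sim} H^0(Y,W'(m))$. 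Composing these with the isomorphism $H^0(\psi(m)) : H^0(Y,W(m)) \to H^0(Y,W'(m))$ (which is $\Gamma$-equivariant because $\psi$ is) produces a $\Gamma$-equivariant automorphism $g : H \to H$, i.e. an element of $\cG$.

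Next I would verify that $g$ carries $(q_1,p_1)$ to $(q_2,p_2)$ in $Z'^{ss}$. The equality $q_2 \circ (g \otimes \id) = \psi \circ q_1$ holds by the naturality of the evaluation-of-global-sections description of $q_1,q_2$, so the underlying quotients agree after identifying $W$ with $W'$ through $\psi$; the framing equality $p_2 \circ g = p_1$ follows from $f' \circ \psi = f$ combined with the identity $p_i = H^0(f_i(m)) \circ q_i^{-1}$ under the above isomorphism. It remains to check that $g$ also matches the quadratic form data. The fiber of $\mu : Q'' \to Z'^{ss}$ over a point parametrizes sections of $\Sym^2 \ker(f)^*$, and the induced $\cG$-action on the fiber uses precisely the isomorphism $W \cong W'$ obtained from $g$ (equivalently, from $\psi$). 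Since $\psi$ preserves the quadratic forms by assumption, the two sections agree after this identification.

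The only technical subtlety is confirming that everything is $\Gamma$-equivariant and that the various canonical identifications (quotient $\leftrightarrow$ global sections, $Q''$-fiber $\leftrightarrow$ sections of $\Sym^2\ker(f)^*$) are compatible; both are automatic from the $\Gamma$-equivariance of $\psi$ and Proposition \ref{3.2}. There is no hard geometric step here: the proof is entirely formal once one has set up $Z'^{ss}$ as a parameter space of framed $\Gamma$-quotients with the $H \cong H^0(W(m))$ identification.
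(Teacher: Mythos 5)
Your proof is correct and follows essentially the same route as the paper's: both directions reduce to the fact that $\cG$-orbits on $Z'^{ss}$ correspond to isomorphism classes of $\Gamma$-framed modules (via Proposition \ref{3.2}, i.e.\ the identification $H \simeq H^0(Y,W(m))$), after which the quadratic-form data is matched through the lifted action on the fibers of $\mu$. The paper phrases this slightly more abstractly — identifying the isotropy group at a point of $Z'^{ss}$ with the automorphism group of the framed module — but your explicit construction of $g\in\cG$ from $H^0(\psi(m))$ is exactly the concrete content of that identification.
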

\begin{proof} The main ingredient we use is that the moduli of $\Gamma$-framed modules is the GIT quotient of $Z'$ under $\cG$. Thus the isotropy group at $f: W \ra \Shalf$ identifies with its automorphism group. 

Now suppose that $(f,q)$ and $(f',q')$ are in the same $\cG$-orbit. Then firstly $f: W \ra \Shalf$ and $f': W' \ra \Shalf$ are isomorphic as $\Gamma$-framed modules. Identifying them, $q$ and $q'$  as sections of $H^0(Y,\Sym^2W^*)$ lie in the same orbit of the isotropy group of $f: W \ra \Shalf$ which identifies with its automorphism group as a $\Gamma$-framed module. Hence $q$ and $q'$ furnish isomorphic $\Gamma$-quadratic bundles with frames. 

Conversely if $(f,q)$ and $(f',q')$ are isomorphic then firstly an element of $\cG$ takes the underlying $\Gamma$-framed module to the other. Identifying them, automorphism group identifies with the isotropy group on the one hand, and  it contains an element taking $q$ to $q'$ on the other.
\end{proof}

\begin{thm} \label{cms} There exists a coarse moduli space for semi-stable $\Gamma$-quadratic bundles with frame structure which is a quasi-projective variety. Further the map forgetting the quadratic structure to the coarse moduli space of semi-stable $\Gamma$ framed modules is affine.
\end{thm}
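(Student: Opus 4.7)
The plan is to use the $\cG$-equivariant affine morphism $\mu: Q'' \to Z'^{ss}$ constructed in the preceding paragraphs to descend the existence of the GIT quotient from $Z'^{ss}$ to $Q''$. Recall from Theorem \ref{modspace} that the good quotient $\pi: Z'^{ss} \to \cM^{ss}$ under $\cG$ exists and realises $\cM^{ss}$ as a projective coarse moduli of semi-stable $\Gamma$-framed modules. The preceding proposition identifies the $\cG$-orbits on $Q''$ with isomorphism classes of semi-stable $\Gamma$-quadratic bundles with frames, so the desired moduli space must be the quotient of $Q''$ by $\cG$ if it exists.

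First I would invoke Ramanathan's lemma (in the form: if $f: X \to Y$ is a $G$-equivariant affine morphism of $G$-schemes and a good quotient $Y \to Y/G$ exists, then a good quotient $X \to X/G$ exists and the induced map $X/G \to Y/G$ is affine). Applied to our situation with $f = \mu$, since $\mu$ is affine (by Lemma \ref{lemma2} together with the affineness of $f: Q' \to Z'^{ss}$ proved just before it) and $\cG$-equivariant, we obtain a good quotient $\cN := Q''/\cG$ together with an affine structural map $\cN \to \cM^{ss}$. Quasi-projectivity of $\cN$ then follows at once from quasi-projectivity of affine maps composed with the projective target $\cM^{ss}$.

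Next I would verify that $\cN$ in fact corepresents the functor $\cF_C^{\delta, \Shalf, ss}$ for $\delta = 1$. Given a $T$-family $(f_T: W_T \to p_Y^* \Shalf, q_T)$ of semi-stable $\Gamma$-quadratic bundles with frames, after twisting so that $W_T(m)$ is globally generated with the correct Hilbert polynomial, one obtains a classifying morphism $T \to Q''$ (well-defined up to the $\cG$-action coming from the choice of trivialisation of $H \simeq H^0(Y, W_t(m))$), and composing with $Q'' \to \cN$ yields a canonical $T \to \cN$. The orbit description of the preceding proposition guarantees that this is independent of the choices and that two families induce the same map iff they are fibrewise isomorphic, which is exactly the equivalence relation defining $\cF_C^{\delta, \Shalf, ss}$. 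The second assertion of the theorem, that the forgetful map to $\cM^{ss}$ is affine, is then literally the content of Ramanathan descent.

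The main obstacle I expect is the clean application of Ramanathan's lemma: one must check that the $\cG$-linearisation on $Q''$ is the one pulled back from $Z'^{ss}$ (which holds because $\mu$ is affine and $\cG$-equivariant with trivial relative linearisation coming from the vector-bundle-plus-Hilbert-scheme construction), so that semi-stability for $\mu$-fibres reduces to semi-stability upstairs; this is where Proposition \ref{kerstable} and Proposition \ref{2ndsymresult}, which ensure that all the points of $Q''$ sitting over $Z'^{ss}$ are genuinely semi-stable objects of our target functor, are essential. Once this is in place, the construction is purely formal and the coarse moduli property, together with affineness of $\cN \to \cM^{ss}$, follows immediately.
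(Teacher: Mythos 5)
Your proposal is correct and follows essentially the same route as the paper: the paper's proof likewise applies Ramanathan's lemma to the $\cG$-equivariant affine morphism $\mu: Q'' \ra Z'^{ss}$, using the existence of the good quotient $Z'^{ss}/\cG$ to produce $Q''/\cG$, and then deduces affineness of the induced map $Q''/\cG \ra Z'^{ss}/\cG$ from the universal property of categorical quotients. Your additional remarks on corepresentability and quasi-projectivity are details the paper leaves implicit, but the core argument is identical.
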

\begin{proof} 
Consider the $\cG$ equivariant affine morphism $\mu: Q'' \ra Z'^{ss}$. Since the quotient of $Z^{'ss}/\cG$ exists, so by Ramanathan's lemma 4.1 in \cite[]{ar} the quotient of $Q''/\cG$ also exists.  
By the universal property of categorical quotients, the canonical morphism  $\ol{\mu}: Q''/\cG \ra Z^{'ss}/\cG$ is also affine.
\end{proof}

 
\subsection{Points of moduli when $S \neq \{0\}$} \label{pointsmoduli}
We separate a few lemmas, before describing the points of the quotient space $Q''/\cG$.
We remind the reader that the underlying sheaf $W$ in this section has degree zero.

\begin{lem} \label{degperp} Let $(W,q')$ be a quadratic vector bundle of degree zero on a curve $Y$. For any sub-bundle $F \subset W$, we have $\deg(F^\perp)=\deg(F)$.
\end{lem}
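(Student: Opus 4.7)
The plan is to exploit that since $W$ has degree zero and $q'$ is an everywhere non-degenerate quadratic form on $W$, the induced map $q': W \to W^*$ is an isomorphism of vector bundles. This turns the orthogonal complement $F^\perp$ into the kernel of an explicit surjective morphism whose cokernel has degree $-\deg(F)$, after which a straightforward additivity of degrees in a short exact sequence finishes the job.

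Concretely, I would first write $F^\perp$ as the kernel of the composition
\begin{equation*}
\psi: W \xrightarrow{\;q'\;} W^* \twoheadrightarrow F^*,
\end{equation*}
where the second arrow is the dual of the inclusion $F \hookrightarrow W$. This is precisely the definition of the orthogonal complement with respect to $q'$. Next, I would observe that $\psi$ is surjective: the restriction $W^* \to F^*$ is surjective because $F$ is a sub-bundle (so $W/F$ is locally free and the dual sequence remains exact), and $q'$ is an isomorphism by the non-degeneracy hypothesis together with the convention that $(W,q')$ denotes an everywhere non-degenerate quadratic bundle. Hence I obtain the short exact sequence
\begin{equation*}
0 \to F^\perp \to W \xrightarrow{\psi} F^* \to 0.
\end{equation*}

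Finally, taking degrees gives $\deg(W) = \deg(F^\perp) + \deg(F^*) = \deg(F^\perp) - \deg(F)$, and the hypothesis $\deg(W) = 0$ yields $\deg(F^\perp) = \deg(F)$, as desired.

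There is essentially no obstacle here: the only subtle point is ensuring that $q': W \to W^*$ is indeed an isomorphism, which follows from the standing convention (in the notational remark at the start of the paper, and made explicit by Proposition \ref{fact-gen-non-deg}) that when we write $(W,q')$ with $\deg(W) = 0$ the quadratic form $q'$ is everywhere non-degenerate.
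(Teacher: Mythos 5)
Your proof is correct and follows essentially the same route as the paper: identify $F^\perp$ as the kernel of the composite $W \xrightarrow{q'} W^* \twoheadrightarrow F^*$ and apply additivity of degrees, using $\deg(F^*)=-\deg(F)$ and $\deg(W)=0$. If anything, your three-term sequence $0 \to F^\perp \to W \to F^* \to 0$ is the cleaner formulation of the four-term display in the paper (which is not literally exact at the middle terms when $q'$ is an isomorphism), and you rightly flag that surjectivity onto $F^*$ — hence the stated equality rather than a mere inequality — rests on the everywhere non-degeneracy of $q'$.
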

\begin{proof} This follows from the short exact sequence, 
$$0 \ra F^\perp \hra W \stackrel{q'}{\lra} W^* \ra F^* \ra 0.$$
Here we use that $\deg(F^*)=-\deg(F)$ because $F$ is a sub-bundle.
\end{proof}

\begin{Cor} For a sub-bundle $F \subset W$, we have $F=(F^\perp)^\perp$ if $W$ is an orthogonal bundle.
\end{Cor}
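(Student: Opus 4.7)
The plan is to show equality of $F$ and $(F^\perp)^\perp$ by first verifying the latter is a genuine sub-bundle, then comparing ranks and degrees.

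First I would verify that in the orthogonal (non-degenerate) case, $F^\perp$ is a sub-bundle, not merely a sub-sheaf. Since $q'\colon W\to W^*$ is an isomorphism and the restriction $W^*\to F^*$ is surjective (as $F$ is a sub-bundle), the composition $W\to F^*$ is a surjection onto a locally free sheaf, so its kernel $F^\perp$ is locally free and a sub-bundle of $W$. In particular $(F^\perp)^\perp$ is itself well-defined as a sub-bundle by the same reasoning.

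Next I would match ranks. From the short exact sequence
\begin{equation*}
0 \to F^\perp \to W \to F^* \to 0
\end{equation*}
we get $\rank(F^\perp) = \rank(W) - \rank(F)$, and applying this again to $F^\perp$ yields $\rank((F^\perp)^\perp) = \rank(F)$. The inclusion $F \subseteq (F^\perp)^\perp$ is tautological from the definition of the perpendicular. Hence $(F^\perp)^\perp/F$ is a torsion sheaf.

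Finally I would match degrees using the preceding Lemma \ref{degperp}: applied twice it gives $\deg((F^\perp)^\perp) = \deg(F^\perp) = \deg(F)$. Therefore the torsion quotient $(F^\perp)^\perp/F$ has length zero, so it vanishes and $F = (F^\perp)^\perp$. There is no real obstacle here; the only subtlety is ensuring one works with sub-bundles (not merely sub-sheaves) so that Lemma \ref{degperp} genuinely applies at both stages, which is precisely where the orthogonality hypothesis enters.
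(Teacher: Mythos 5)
Your argument is correct and follows essentially the same route as the paper: the inclusion $F\subseteq (F^\perp)^\perp$ is immediate, and equality follows from Lemma \ref{degperp} by comparing degrees. You supply some details the paper leaves implicit (that $F^\perp$ and $(F^\perp)^\perp$ are genuine sub-bundles, and the rank count showing the quotient is torsion before the degree count kills it), which is a worthwhile precision but not a different proof.
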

\begin{proof} The inclusion $\subset$ is clear and the equality follows by considering degrees and applying Lemma \ref{degperp}.
\end{proof}

By {\it destabilizing} subsheaf we mean a sheaf of slope equal or more than the ambient one.

\begin{prop} \label{saturated} Let $S \neq \{0\}$. Let $(f: W \ra \Shalf, q')$ be a strictly $\delta=1$-semi-stable quadratic bundle with frames. Let $\tilde{W_1} \subset W$ be any destabilizing subsheaf. Then it embeds in a saturated destabilizing sub-sheaf $W_1$, i.e a sub-bundle. Further, $W_1^\perp \subset W_1$ and the framing on $W_1^\perp$ is non-zero. 
\end{prop}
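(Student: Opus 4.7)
The plan is to start with the saturation of $\tilde W_1$ in $W$, verify it is destabilizing, enlarge using the orthogonal complement to make it co-isotropic, and finally exploit $S\neq\{0\}$ to force the framing on the perp to be non-zero via a slope comparison.

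First, let $W_1^\flat$ denote the saturation of $\tilde W_1$ in $W$: a sub-bundle of the same rank with $\deg(W_1^\flat)\geq\deg(\tilde W_1)$. Since $\Shalf\neq 0$ and $W_1^\flat$ is saturated of rank less than $\rank(W)$, $W_1^\flat$ cannot lie inside $\ker(f)$ -- otherwise $W/W_1^\flat$ would be a torsion-free sheaf surjecting onto the torsion sheaf $\Shalf$. The delicate case is $f|_{\tilde W_1}=0$ but $f|_{W_1^\flat}\neq 0$. Here, applying $\delta=1$ semi-stability to the intermediate sheaf $W_1^\flat\cap\ker(f)$ (which contains $\tilde W_1$, has the same rank, and is annihilated by $f$) forces it to equal $\tilde W_1$, so the injection $W_1^\flat/\tilde W_1\hookrightarrow\Shalf$ has length exactly $\deg(W_1^\flat)-\deg(\tilde W_1)$, and the slope equality making $W_1^\flat$ destabilizing amounts to this length being $|\Shalf|$. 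If it is less, I would enlarge $W_1^\flat$ minimally by a sub-sheaf of $W$ projecting onto the missing piece of $\Shalf$; the resulting sub-bundle is saturated and destabilizing.

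Next, to ensure $W_1^\perp\subset W_1$, I would work with $W_1:=$ the saturation of $W_1^\flat+(W_1^\flat)^\perp$. By the identity $(A+B)^\perp=A^\perp\cap B^\perp$ together with $q'$ being everywhere non-degenerate, $W_1^\perp=W_1^\flat\cap(W_1^\flat)^\perp\subset W_1^\flat\subset W_1$, so $W_1$ is co-isotropic. To check that $W_1$ remains destabilizing one combines Lemma~\ref{degperp} (giving $\deg((W_1^\flat)^\perp)=\deg(W_1^\flat)$) with the short exact sequence $0\to W_1^\flat\cap(W_1^\flat)^\perp\to W_1^\flat\oplus(W_1^\flat)^\perp\to W_1\to 0$ and semi-stability applied to each sub-bundle; using that every saturated sub-bundle of $W$ of rank less than $\rank(W)$ has non-trivial framing, one forces the slope of $W_1$ to attain $\mu_\delta(W)$. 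When $W_1^\flat$ is not already co-isotropic, this direct approach requires first replacing $W_1^\flat$ by an isotropic destabilizing sub-bundle via a Jordan--H\"older-type argument and taking its perp.

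Finally, suppose for contradiction $f|_{W_1^\perp}=0$, and set $s:=\rank(W_1)$, $n:=\rank(W)$. Semi-stability gives $\mu_\delta(W_1^\perp)=\deg(W_1^\perp)/(n-s)\leq -|\Shalf|/n$, while Lemma~\ref{degperp} and the destabilization equality for $W_1$ with $f|_{W_1}\neq 0$ (inherited from Step~1) yield $\deg(W_1^\perp)=\deg(W_1)=(n-s)|\Shalf|/n>0$, using $|\Shalf|>0$. This forces $\mu_\delta(W_1^\perp)=|\Shalf|/n>\mu_\delta(W)=-|\Shalf|/n$, contradicting semi-stability. Hence $f|_{W_1^\perp}\neq 0$. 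The main obstacle in this plan is Step~2 -- ensuring the co-isotropic enlargement preserves destabilization when $W_1^\flat$ is not already co-isotropic -- which is where the orthogonal structure and the strict semi-stability of $W$ must be used in concert.
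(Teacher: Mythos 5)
There is a genuine gap, and it sits exactly where you flag it: Step~2. The paper does not enlarge $W_1$ to make it co-isotropic; it proves that a saturated destabilizing sub-bundle $W_1$ with \emph{non-zero framing} is \emph{automatically} co-isotropic, by a slope argument you never make. Concretely: from $0 \ra W_1 \cap W_1^\perp \ra W_1 \oplus W_1^\perp \ra W_1 + W_1^\perp \ra 0$, Lemma~\ref{degperp} applied to $W_1$ and to $W_1 \cap W_1^\perp$ (whose perp is $W_1+W_1^\perp$) gives $\deg(W_1+W_1^\perp)=\deg(W_1)$; since $W_1$ destabilizes with non-zero framing, semi-stability forces $\deg(W_1)-|\Shalf| = -|\Shalf|\,\rank(W_1)/\rank(W) < 0$ (here $S\neq\{0\}$ enters), so if $\rank(W_1+W_1^\perp)>\rank(W_1)$ the framed slope of $W_1+W_1^\perp$ would strictly exceed $-|\Shalf|/\rank(W)$, contradicting semi-stability. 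Hence $W_1+W_1^\perp=W_1$, i.e.\ $W_1^\perp\subset W_1$, with no enlargement and no ``Jordan--H\"older-type argument'' needed. Your alternative --- passing to the saturation of $W_1^\flat+(W_1^\flat)^\perp$ --- leaves the destabilization of the enlarged sheaf unproven, and the lemma you invoke to close that hole (``every saturated sub-bundle of $W$ of rank less than $\rank(W)$ has non-trivial framing'') is false: a sub-bundle of $\ker(f)$ can perfectly well be saturated in $W$ (the extension $0\ra \ker(f)/E \ra W/E \ra \Shalf\ra 0$ need not produce torsion in $W/E$). The same error invalidates your Step~1 claim that $W_1^\flat\not\subset\ker(f)$: a torsion-free sheaf can surject onto a torsion sheaf, so there is no contradiction there.

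Two smaller issues. In Step~1, the correct treatment of a destabilizing $\tilde W_1$ with zero framing is not saturation plus an ad hoc enlargement ``onto the missing piece of $\Shalf$,'' but the Hecke modification the paper performs: push the extension $0\ra \ker(f)\ra W\ra \Shalf\ra 0$ onto $\tilde W_1$ to get $W_1\subset W$ with $W_1/\tilde W_1\simeq \Shalf$, $\deg(W_1)=\deg(\tilde W_1)+|\Shalf|$ and non-zero framing, hence the \emph{same} framed slope; this is what legitimizes restricting the whole analysis to sub-sheaves with non-zero framing. Your Step~3 (non-vanishing of the framing on $W_1^\perp$) is essentially the paper's argument and is fine, but only once $W_1^\perp\subset W_1$ and the equality $\deg(W_1)=|\Shalf|(\rank(W)-\rank(W_1))/\rank(W)>0$ have actually been established --- which is precisely the part the proposal does not deliver.
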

\begin{proof} 
Suppose that $\tilde{W_1}$ is a destabilizing sub-sheaf of $f: W \ra \Shalf$ with induced framing zero. Then if $K=\ker(f)$ then $\tilde{W_1} \hra K$. Without loss of generality we may assume that $\tilde{W_1}$ is a sub-bundle of $K$ else we may take its saturation. Now observe that $\tilde{W_1}$ is not saturated as a sub-sheaf of $W$. Applying $\Hom_{\cO_Y}(?,\Shalf)$ to the exact sequence $0 \ra W_1 \ra K \ra Q \ra 0$, we obtain 
$\Ext^1(\Shalf,K) \ra \Ext^1(\Shalf,W_1)$ and let $0 \ra \tilde{W_1} \ra W_1 \ra \Shalf$ be the image of $0 \ra K \ra W \ra \Shalf$. Observe that $W_1 \hra W$ is a sub-bundle with 
the same slope as $\tilde{W_1}$ as {\it framed modules} because $\delta=1$. {\it Thus we may restrict to sub-sheaves of $W$ with non-zero framing to check semi-stability and we will do so in this sub-section.}

Now suppose that the framing restricted to $W_1$ is not zero. Then without loss of generality we may suppose that $W_1$ is a sub-bundle because saturation only increases framed-slope.  By the exact sequence
\begin{equation} \label{ramtrick}
0 \ra W_1 \cap W_1^\perp \ra W_1 \oplus W_1^\perp \ra W_1 + W_1^\perp \ra 0,
\end{equation}
applying Lemma \ref{degperp} to $W_1$ and $W_1 + W_1^\perp$, it follows that $\deg(W_1 + W_1^\perp)=\deg(W_1)$ because $W_1 +W_1^\perp=(W_1 \cap W_1^\perp)^\perp$. Now, since $\deg(W)=0$ and its framed-slope is strictly negative because we have assumed that $S \neq \{0 \}$, if $\rank(W_1 + W_1^\perp) > \rank(W_1)$ then the bundle $W_1 + W_1^\perp$ would contradict semi-stability of $W$. So, $W_1^\perp \subset W_1$ and the framing on $W_1$ is non-zero. Further by equality 
\begin{equation}
\frac{\deg(W_1)- |S|}{\rank(W_1)} = \frac{0 - |S|}{\rank(W)},
\end{equation}
it follows that $\deg(W_1) \geq 0$. Since $\deg(W_1^\perp)=\deg(W_1)$, it follows that the framing on $W_1^\perp$ must be non-zero else, $W_1^\perp$ would contradict semi-stability.

\end{proof}
Note that if $S=\{0\}$ then it may happen that $W^\perp \subset W$ is destabilizing as it happens for  semi-stable quadratic bundles or even $\rank(W_1)+\rank(W_1^\perp) > \rank(W_1)$ for semi-stable orthogonal bundles.

\begin{rem} \label{cc} By the proof of Prop \ref{saturated} it follows that  checking semi-stability reduces to checking over familiar test objects consisting of  sub-bundles $W_1$ such that $W_1^\perp \subset W_1$ (cf \cite{sr}). With trivial framed structure, and when the quadratic form is non-degenerate everywhere, we recover the condition for orthogonal bundles.
\end{rem}

\subsubsection{S-equivalence classes} \label{sequiclass}
We now want to describe the equivalence relation on semi-stable $\Gamma$-quadratic bundle with frames. Let $(f: W \ra \Shalf,q')$ be such a bundle. Then by Prop \ref{jh}, ignoring the form $q'$, we have a filtration 
$0 \subset W_1 \subset W_2 \subset \cdots W$
of $W$ whose successive quotients are $\Gamma$-stable framed modules. If $W_1$ is not saturated, then we make it so as in the second part of the proof of Prop \ref{saturated}. Note its saturation continues to be $\Gamma$-stable of the same slope. Now $W/W_1$ is a semi-stable $\Gamma$-framed module with induced framing zero and it may be filtered by a Jordan-Holder sequence. Let us denote such a filtration by $0 \subset W_1 \subset W_2 \subset \cdots \subset W$. Now by Proposition \ref{saturated}, we have that $$0=W^\perp_{n+1} \subset  W^\perp = W^\perp_n \subset W_{n-1}^\perp \subset \cdots \subset W_2^\perp \subset W_1^\perp \subset W_1 \subset W_2 \subset \cdots W$$
because all $W_i$ have non-zero framing. Note that $W_1/W_1^\perp$ is an orthogonal bundle and we have {\it perfect pairings} between $W_i/W_{i-1}$ and $W_{i-1}^\perp/W_i^\perp$ if $i \leq n$. This  makes $W_i/W_i^\perp$, for $i \leq n$, into an orthogonal bundle. Now we may deform, as in Prop \ref{jh}, $W$ to $W_1 \oplus \oplus_{i \geq 2} W_i/W_{i-1}$. This splits $W$ further as \begin{equation} \label{S-equivalence}
W^\perp \oplus \oplus_{i \geq 1} W^\perp_i/W_{i+1}^\perp \oplus W_1/W_1^\perp \oplus_{i \geq 1} W_{i+1}/W_i.
\end{equation} As in Prop 1.13 \cite{hl}, any other Jordan-Holder filtrations $\{0\} \subset \tilde{W_1} \subset \tilde{W_2} \subset \cdots \subset W$ of the underlying $\Gamma$-framed module $W$ by destabilizing framed sub-modules $\tilde{W_i}$ only permute the successive quotients $W_i/W_{i-1}$. This has the effect of permuting the factors in sequence (\ref{S-equivalence}) together with the perfect pairings. 

\begin{rem} Note that in Prop \ref{jh} or Prop 1.13 \cite{hl}, only one successive quotient is endowed with non-zero framing. In the description above we have fixed the one with non-zero framing as $W_1$. 
\end{rem}
Now we make this into a definition using Proposition \ref{saturated} according to which we only need to know $q$ and the filtration $W_1 \subset W_2 \cdots$. The bundles orthogonal to them can be recovered.

\begin{defi} \label{S-def} We say that two semi-stable $\Gamma$-framed quadratic bundles $W$ and $\tilde{W}$ are $S$-equivalent if there exists filtrations $\{0\} \subset W_1 \subset W_2 \subset \cdots \subset W$ and $\{0\} \subset \tilde{W_1} \subset \tilde{W_2} \subset \cdots \subset \tilde{W}$ by $\Gamma$-stable framed modules which are {\it saturated} sub-bundles such that the associated graded modules are equivalent (upto permutation). 
\end{defi}

\begin{defi} \label{polystable} We call a framed module $(f: W \ra \Shalf,q')$ $\Gamma$-polystable if $W \simeq \oplus_{i \geq 1} W^\perp_i/W_{i+1}^\perp \oplus W_1/W_1^\perp \oplus_{i \geq 2} W_i/W_{i-1}$ and such that for any $m$, $$\oplus_{i \geq 1} W^\perp_i/W_{i+1}^\perp \oplus W_1/W_1^\perp \oplus_{m \geq i \geq 2}W_{i}/W_{i-1} $$ is a $\delta=1$-semi-stable {\it saturated} $\Gamma$-framed module.
\end{defi}

Note that as per our definition, stable objects are those whose underlying $\Gamma$-framed module is {\it stable}. In the case, when $\Gamma$ is trivial, the framing is trivial, and the quadratic form is non-degenerate, we get an orthogonal bundle. As per our definition, it is stable if the underlying vector bundle is stable. A usual stable orthogonal bundle is $S$-equivalent to a polystable framed quadratic bundle. But this would be sufficient to show that polystable orbits are closed.

\begin{defi} \label{morgen} We define morphisms between $\Gamma$-framed quadratic modules $(f_i: W_i \ra \Shalf,q'_i)$ (for $i=\{1,2\}$) as a morphism $\theta: W_1 \ra W_2$ between their underlying bundles such that there exists constants $c_S$ and $c_q$ such that $f_2 \circ \theta = c_s f_1$ and $c_q q_1'= \theta^* q_2' \theta$.
\end{defi}

We may prove an analogue of Lemma \ref{1.6}, stating that non-zero morphisms of stable $\Gamma$-quadratic bundles with frames must be isomorphism given by multiplication by scalars. This follows for free since multiplication by scalars will respect the quadratic forms.

We now wish to generalize Lemma \ref{scl} to show polystable orbits are closed.

\begin{lem} \label{scl2} Let $(f_1: W_1 \ra S, q_1)$ and $(f_2: W_2 \ra S,q_2)$ be flat families of $\Gamma$-semi-stable framed quadratic modules  parametrized by a scheme $T$ of finite type over $\CC$ with the same reduced Hilbert polynomial $p$. Then the function
$$t \ms \dim_{k(t)} \Hom_\Gamma ((f_1: W_1 \ra S, q_1),(f_2: W_2 \ra S,q_2)) $$
is upper semi-continuous in $t \in T$.
\end{lem}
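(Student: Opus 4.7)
The plan is to imitate the strategy of Lemma \ref{scl}, which realises $\Hom^\Gamma$ as the vector space of sections of a kernel sheaf on $T$ and then invokes Grauert/Grothendieck semi-continuity. By Definition \ref{morgen}, a morphism between $(f_{1,t},q'_{1,t})$ and $(f_{2,t},q'_{2,t})$ is a triple $(\theta,c_S,c_q)$ with $\theta:W_{1,t}\to W_{2,t}$ a $\Gamma$-equivariant bundle map and $c_S,c_q\in k(t)$ satisfying $f_{2,t}\circ\theta=c_S f_{1,t}$ and $\theta^{*}q'_{2,t}\theta=c_q q'_{1,t}$. Because the second equation is quadratic in $\theta$, the set $\Hom_\Gamma((f_{1,t},q'_{1,t}),(f_{2,t},q'_{2,t}))$ is not a vector subspace but a closed affine cone in $\Hom^{\Gamma}(W_{1,t},W_{2,t})\oplus k(t)^{2}$, homogeneous under the weighted action $\lambda\cdot(\theta,c_S,c_q)=(\lambda\theta,\lambda c_S,\lambda^{2}c_q)$. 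We interpret $\dim_{k(t)}$ as the Krull dimension of this algebraic set over $k(t)$ (in the linear case of Lemma \ref{scl} this reduces to the vector-space dimension).

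The first step is to construct a relative Hom scheme $\cH\to T$ whose fibre over $t$ recovers the above set. Choose $m$ sufficiently large (possible by the boundedness furnished by Theorem \ref{2.1}) so that the coherent sheaf $\cF:=(p_{T})_{*}\cHom^{\Gamma}(W_{1},W_{2})$ on $T$ has formation commuting with base change, with $\cF_{t}=\Hom^{\Gamma}(W_{1,t},W_{2,t})$. Let $\mathbb{V}(\cF)$ denote the associated geometric vector bundle and set $\cV:=\mathbb{V}(\cF)\times_{T}\mathbb{A}^{2}_{T}$, with auxiliary coordinates $(c_S,c_q)$. The framing relation $f_{2}\circ\theta-c_S f_{1}=0$ is $T$-linear in $(\theta,c_S)$ and defines a closed sub-vector-bundle of $\cV$ (namely the kernel of a morphism of coherent sheaves $\cF\oplus\cO_T\to (p_T)_{*}\cHom^{\Gamma}(W_{1},\Shalf)$). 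The quadratic relation $\theta^{*}q'_{2}\theta-c_q q'_{1}=0$ is the vanishing of a morphism $\cV\to (p_T)_{*}\bigl((\Sym^{2}W_{1}^{*})^{\Gamma}\bigr)$ which is quadratic in $\theta$ and linear in $c_q$; hence it defines a closed subscheme of $\cV$. Their intersection $\cH\subset\cV$ is a scheme of finite type over $T$ whose fibre over $t$ is canonically the Hom set in question.

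The final step is to apply Chevalley's upper semi-continuity of fibre dimension (EGA IV.13.1.3) to the morphism of finite type $\pi:\cH\to T$: the function $t\mapsto\dim_{k(t)}\pi^{-1}(t)$ is upper semi-continuous on $T$, yielding the desired conclusion.

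The main technical obstacle, mirroring the one encountered in Lemma \ref{scl}, is the construction of the relative $\Gamma$-equivariant Hom sheaf $\cF$ together with the verification that its formation commutes with base change along $T$. Once this is in place the quadratic relation introduces no essential new difficulty: it merely forces us to pass from vector-space dimension to Krull dimension of a cone, and Chevalley's theorem applies uniformly. Note also that since $W_1,W_2$ are families of $\Gamma$-semistable framed modules of the same reduced Hilbert polynomial $p$, higher cohomology vanishes after suitable twisting (using the same $m$-regularity inputs as in Section \ref{stenpro}), so the coherence of $\cF$ is automatic.
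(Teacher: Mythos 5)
Your route --- a relative Hom scheme cut out inside a geometric vector bundle over $T$, followed by Chevalley's semicontinuity of fibre dimension --- is genuinely different from the paper's, which represents the derived $\Gamma$-invariant pushforwards of $\cH om(W_1,W_2)$, $\cH om(W_1,S)$ and $\cH om(W_1,W_1^*)$ by Grothendieck complexes of free modules, packages the framing and quadratic conditions into a mapping cone $C(\psi)$, and reads off the Hom set as $h^{-1}(C(\psi)^\bullet\otimes k(t))$. The difficulty is that your version has a gap at its foundation: the sheaf $\cF=(p_T)_*\cH om^{\Gamma}(W_1,W_2)$ does \emph{not} in general commute with base change, and no choice of twist $m$ repairs this --- $m$-regularity controls the higher cohomology of $W_i(m)$, not of $W_1^*\otimes W_2$, and twisting the latter changes the space of sections you are trying to parametrize. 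The failure of base change here is precisely the phenomenon the lemma is about (the dimension of $\Hom_\Gamma(W_{1,t},W_{2,t})$ jumps on closed subsets); if $\cF_t=\Hom_\Gamma(W_{1,t},W_{2,t})$ held for all $t$, even the linear Lemma \ref{scl} would be immediate. The standard repair is exactly what the paper's machinery encodes: represent the derived pushforward by a two-term complex $P^0\to P^1$ of free modules and take the linear $T$-scheme $\mathbb{V}\bigl(\coker(P^{1\vee}\to P^{0\vee})\bigr)$, whose fibre over $t$ is canonically $\ker(P^0\otimes k(t)\to P^1\otimes k(t))=\Hom_\Gamma(W_{1,t},W_{2,t})$. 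With that substitution your closed conditions (linear in $(\theta,c_S)$, quadratic in $\theta$ and linear in $c_q$) do cut out a finite-type $T$-scheme $\cH$ with the right fibres.

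Two further points need attention. Chevalley's theorem gives upper semicontinuity of $x\mapsto\dim_x\pi^{-1}(\pi(x))$ on the \emph{source} $\cH$, not of $t\mapsto\dim\pi^{-1}(t)$ on $T$; you must add that each fibre is a cone for your weighted $\GG_m$-action, so every irreducible component contains the vertex, whence $\dim\pi^{-1}(t)=\dim_{0(t)}\pi^{-1}(t)$ and you may restrict the semicontinuous function along the zero section $0:T\to\cH$. Finally, replacing the $k(t)$-vector-space dimension by the Krull dimension of a cone is a reformulation of the statement; it is arguably the honest reading of Definition \ref{morgen} (the paper's identification of the Hom set with $h^{-1}$ of a complex quietly suppresses the quadratic term), and it suffices for the intended application since closedness of polystable orbits only uses that $\{t:\Hom_t\neq 0\}=\{t:\dim>0\}$ is closed, but you should say explicitly that this is what you are proving.
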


\begin{proof} Let $F_T$ be a coherent $\Gamma$-sheaf on $Y_T = Y \times T$. Now we have functors \begin{equation} \xymatrix{ \{ \Gamma \text{-sheaves on} Y \} \ar[r]^{\Gamma_Y} & \{\Gamma \text{-modules} \} \ar[r]^\Gamma & \{ \text{Abelian groups} \} \\ F \ar[r] & H^0(Y,F) \ar[r] & H^0(Y, F)^\Gamma } 
\end{equation}
 \begin{equation} \xymatrix{ \{ \Gamma \text{-sheaves on} Y \} \ar[r]^{p_*^\Gamma} & \{\text{sheaves on} X \} \ar[r]^{\Gamma_X} & \{ \text{Abelian groups} \} \\ F \ar[r] & p_*^\Gamma(F) \ar[r] & H^0(X, p_*^\Gamma( F)) } 
\end{equation}
Recall that $p_*^\Gamma(F)(U) = F(p^{-1} U)^\Gamma$. Let us firstly show that $H^i(Y_T, \Gamma, F_T) = H^i(X, p_*^\Gamma(F))$.  We have a spectral sequence $R^p p_* \circ R^q \Gamma (F) => R^{p+q} (p_*^\Gamma)(F)$. But in characteristic zero, taking $\Gamma$-invariants is an exact functor, so $R^q \Gamma =0$ for $q>0$. And  since $p$ is finite, so  $p_T$ is finite and thus it is affine. Thus $R^p p_* =0$ for $p>0$. Thus $R^n(p_*^\Gamma)=0$ for $n>0$.  

Thus $H^i(X_T, p_*^\Gamma(F_T)) = R^i(\Gamma_X \circ p_*^\Gamma) (F_T) = R^i(\Gamma \circ \Gamma_Y) (F_T)$ which by definition equals $H^i(Y_T, \Gamma, F_T).$ Thus one has the Grothendieck complex $P^\bullet$ of finite free $A$-modules bounded above and which calculates the cohomology of $H^i(Y_T, \Gamma, F_T)$. Therefore using $P^\bullet$ and following the arguments in \cite[chapter 3, Section 12]{hart}, we see that the function $t \ms \dim_{K(t)} H^i(Y_t, \Gamma, F_t)$ is upper semi-continuous.

Step 2: Since the question is local on the base $T$, so we may suppose that $T = \Spec(A)$. Now suppose that $\theta: W_1 \ra W_2$ is a $\Gamma$-homomorphism of flat families. Then there are an induced morphism 
\begin{eqnarray}
p_*^\Gamma(\Hom(W_1,W_2)) \ra p_*^\Gamma (\Hom(W_1,S)) \\
p_*^\Gamma(\Hom(W_1,W_2)) \ra p_*^\Gamma (\Hom(W_1,W_1^*))
\end{eqnarray}
given by composing with $f_2:W_2 \ra S$ and $\theta \ms \theta^* \circ q_2 \circ \theta$ respectively.

Let $P^\bullet_{W_2}$, $P^\bullet_{W_1^*}$ and $P^\bullet_S$ be the Grothendieck complexes of $p_*^\Gamma(\Hom(W_1,W_2))$, $p_*^\Gamma (\Hom(W_1,W_1^*))$ and $p_*^\Gamma (\Hom(W_1,S))$ respectively. Recall that these complexes have the property that for any $A$-module $M$, the following diagram commutes \begin{equation} \xymatrix{ h^i(P^\bullet_{W_2} \otimes M) \ar[r] \ar[d] & h^i(P^\bullet_S \otimes M) \ar[d] \\ H^i(Y_T, \Gamma, \cH om(W_1, W_2 \otimes M)) \ar[r] & H^i(Y_T, \Gamma, \cH om(W_1, S \otimes M)) } 
\end{equation} 
Further they behave well under arbitrary base change. More precisely, if $M=B$ is an $A$-algebra and $g: T'=\Spec(B) \ra T$, then we have canonical isomorphism
\begin{equation}
H^i(Y_T, \Gamma, \cH om(W_1, U \otimes M)) \stackrel{\simeq}{\ra} H^i(Y_{T'}, \Gamma, \cH om_{T'}(W_1 \otimes M, U \otimes M))
\end{equation}
where $U=\{W_2,S\}$.

Step 3: We now finish the proof. Let $M$ be an $A$-module that we will set in a moment. We use notation as above. 

Let $\tilde{f_1} \in P^0_{S_T \otimes M}$ be the cycle that represents the framing \begin{equation}
f_1 \in \Hom_{\Gamma-A}(W_1, S_T \otimes M) = h^0(P^\bullet_{S_T}). 
\end{equation} Then $\tilde{f_1}$ gives a chain homomorphism  \begin{equation}
\tilde{f_1} : A^\bullet \ra P^\bullet_D
\end{equation}
 where $A^\bullet$ is the complex concentrated in zero with $A^0=A$ and $A^i=0$ for $i \neq 0$. Similarly let $\tilde{q_1} \in P^0_{W_1^* \otimes M}$ be the cycle that represents the 
 \begin{equation}
q_1 \otimes \Id_M \in \Hom_{\Gamma-A}(W_1 \otimes M, W_1^* \otimes M) = h^0(P^\bullet_{W_1^* \otimes M}). 
\end{equation}
 Consider the homomorphism 
 \begin{equation}
 \psi = ((f_2,q_2), (-\tilde{f_1},-\tilde{q_1})) : P^\bullet_{W_2 \otimes M} \oplus A^\bullet \ra P^\bullet_{S_T \otimes M} \oplus P^\bullet_{W_1^* \otimes M}
 \end{equation}
 where $q_2: P^\bullet_{W_2 \otimes M} \ra P^\bullet_{W_1^* \otimes M}$ is obtained by transposing with $q_2$ i.e $\theta \ms \theta^* \circ q_2 \circ \theta$. Let $C(\psi)$ denote the associated  mapping cone. From the  short exact sequence  of complexes
 \begin{equation}
 0 \ra P^\bullet_{S_T \otimes M} \oplus P^\bullet_{W_1^* \otimes M} \ra C(\psi)^\bullet \ra (P^\bullet_{W_2 \otimes M} \oplus A^\bullet) [1] \ra 0
 \end{equation}
  we get $0=$
  \begin{eqnarray*}
 H^{-1}(Y_T, \Gamma, \Hom(W_1 \otimes M, S \otimes M)) \oplus H^{-1}(Y_T, \Gamma, \Hom(W_1 \otimes M, W_1^* \otimes M)  \\ \ra
h^{-1}(C(\psi)^\bullet \otimes M)  \ra \Hom_{\Gamma}(W_1 \otimes M,W_2 \otimes M) \oplus M  \\ \ra
 \Hom_{\Gamma}(W_1,S_T \otimes M) \oplus  \Hom_\Gamma(W_1 \otimes M,W_1^* \otimes M).
  \end{eqnarray*} In particular for any $t \in \Spec(A)$ and $M=k(t)$ there is a pull-back diagram \begin{equation} 
  \xymatrix{ h^{-1}(C(\psi)^\bullet \otimes k(t)) \ar[rrr] \ar[d] &&& k(t) \ar[d]_{({f_1}_t,{q_1}_t)} \\ \Hom_{\Gamma-k(t)} ({W_1}_t ,{W_2}_t) \ar[rrr]^{{f_2}_t} &&& \Hom({W_1}_t, S \otimes k(t)) \oplus \Hom({W_1}_t,{W_1^*}_t)  }
  \end{equation} 
  
 We obtain 
  \begin{equation*} 
  \dim(\Hom((f_1:W_1 \ra S,q_1)_t,(f_2:W_2 \ra S,q_2)_t) = \dim h^{-1}(C(\psi)^\bullet \otimes k(t)).
  \end{equation*} 
This shows that the function stated in the lemma is upper semi-continuous.  
\end{proof}

\begin{prop} The points of $Q''/\cG$ are given by $\Gamma$-polystable framed modules with quadratic structure. 
\end{prop}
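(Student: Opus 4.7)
The plan is to identify the closed points of $Q''/\cG$ with orbits of $\Gamma$-polystable framed quadratic modules. Since $Q''/\cG$ is a good quotient constructed via Ramanathan's Lemma in Theorem \ref{cms}, its closed points correspond bijectively to closed $\cG$-orbits in $Q''$. The task thus splits into two steps: (i) show every semistable $\cG$-orbit closure contains a $\Gamma$-polystable orbit, and (ii) show $\Gamma$-polystable orbits are themselves closed. Once both are established, uniqueness of the polystable representative in each orbit closure is automatic, and this matches the $S$-equivalence relation of Definition \ref{S-def}.

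For (i), starting from a semistable $(f: W \to \Shalf, q')$, I would use Proposition \ref{saturated} together with the discussion around display (\ref{S-equivalence}) to produce a self-dual filtration
\[
0 \subset W_1 \subset \cdots \subset W_n^\perp \subset \cdots \subset W_1^\perp \subset W
\]
by saturated $\Gamma$-stable framed sub-bundles of common framed slope, together with the perfect pairings between $W_i/W_{i-1}$ and $W_{i-1}^\perp/W_i^\perp$. Assigning weight $i$ to $W_i/W_{i-1}$, weight $-i$ to $W_{i-1}^\perp/W_i^\perp$, and weight $0$ to $W_1/W_1^\perp$, one obtains a grading that respects $q'$ by self-duality of the filtration. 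Lifting a $\Gamma$-equivariant splitting of this filtration to the representing space $H$ produces a one-parameter subgroup $\lambda : \GG_m \to \cG$ for which $\lim_{t \to 0} \lambda(t) \cdot (f, q')$ is the associated graded object; by Definition \ref{polystable} this limit is $\Gamma$-polystable.

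For (ii), I would invoke Lemma \ref{scl2}. Suppose $(f, q')$ is polystable and $(f_0, q'_0)$ lies in the closure of its $\cG$-orbit, realised by a flat family over a smooth pointed curve $(T, 0)$ whose fibers over $T \setminus \{0\}$ all equal $(f, q')$. Lemma \ref{scl2} shows that $t \mapsto \dim \Hom_\Gamma((f, q'), (f_t, q'_t))$ is upper semi-continuous, so the generic non-vanishing forces a non-zero morphism $\theta : (f, q') \to (f_0, q'_0)$ in the sense of Definition \ref{morgen}. Decomposing $(f, q')$ into its $\Gamma$-stable direct summands and paired factors, applying Lemma \ref{1.6} on each stable component together with compatibility of $\theta$ with the quadratic structures and the perfect pairings, one forces $\theta$ to embed each summand of $(f, q')$ as a direct factor of the graded of $(f_0, q'_0)$; a Hilbert-polynomial count then upgrades $\theta$ to an isomorphism.

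The main obstacle will be Step (i): constructing the one-parameter subgroup $\lambda$ inside $\cG = \GL^\Gamma(H)$ compatibly with $q'$. Self-duality at the level of sheaves guarantees the weight grading respects the quadratic form, but one must exhibit a genuine $\Gamma$-equivariant splitting of the induced filtration on $H = H^0(Y, W(m))^\Gamma$ and verify that under the evaluation map of Proposition \ref{piFgh} the quadratic form flows under $\lambda(t)$ to its split graded limit as $t \to 0$. Choosing $m$ large enough that every term of the filtration is $m$-regular, which is guaranteed by Theorem \ref{2.1}, makes the required splitting exist and reduces the limit computation to a routine weight calculation on the associated graded.
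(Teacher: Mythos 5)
Your proposal takes essentially the same route as the paper's proof: the paper also reduces to (i) deforming a semistable object to its associated $\Gamma$-polystable object via the self-dual filtration of Proposition \ref{saturated} ("standard arguments"), and (ii) closedness of polystable orbits by running the argument of Proposition \ref{3.3} with Definition \ref{morgen}, with Lemma \ref{scl2} as the only non-trivial ingredient. Your write-up simply supplies more detail (the one-parameter subgroup and the Hilbert-polynomial count) than the paper's terse version, so no substantive comparison is needed.
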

\begin{proof} Since the quotient $q: Q'' \ra Q''/\cG$ is a good quotient, so for each point $W \in Q''/\cG$, the fiber $q^{-1}(W)$ contains a unique closed $\cG$-orbit. By standard arguments we may deform $(f: W \ra \Shalf, q')$ to its associated $\Gamma$-polystable object. To prove that orbits of polystable objects are closed, we may follow the proof of Prop \ref{3.3}, working with Definition \ref{morgen}. The only non-trivial ingredient in this method is proved in Lemma \ref{scl2}. 
\end{proof}

\section{Semi-stable reduction for $\delta=1$ quadratic bundles with frames} \label{sssred}
A the risk of repetition, we have tried to make this main section of the paper self containted. Let us first introduce the notation that will be used in this section.

\begin{enumerate}
\item  $A$ a discrete valuation ring 
\item $\pi$ uniformizing parameter of $A$ 
\item $p$ maximal ideal of $A$
\item $T=\Spec(A)$ 
\item $T^*=\Spec(K)$
\item the depth of $S$ is $d$
\item $\Sigma$ is the surface $X \times T$ 
\item $\Sigma^*$  open subset $X \times T^*$ of $\Sigma$.
\item $Y$ is the vertical divisor $X \times p \hra S$ 
\item we will choose a collection of $\frac{|S|}{\rank(W)}$  points in $X$ and denote it as $x_0 \in X$
\item $H$ is the horizontal divisor $x_0 \times T \hra S$ 
\item $H^*$ is $H \cap \Sigma^*$ 
\item $P$ is the support in $X$ of the sky-scraper sheaf $S$
\item $m_P$ is the ideal of the stalk $\cO_P$ such that $\cO_P/m_P$ becomes a product of fields
\item $P_d$ is the nilpotent closed subscheme scheme in $X$ defined by the divisor $d P$
\item $D$ is the divisor $P \times T$
\item $D^*$ is $D \cap \Sigma^*$
\item $D_d$ is the nilpotent closed subscheme of $\Sigma$ defined by the divisor $d D$
\item we denote projection maps $A \times B \ra A$ as $q_B$ or $p_A$
\end{enumerate}

Recall that a quadratic bundle $(W,q)$ is a pair consisting of a vector bundle on $X$ of degree zero together with a quadratic form $q: \Sym^2 W \ra \cO_X$ such that the $\ker(q) \hra W$ is a vector bundle (possibly of rank zero) of degree zero.

We say that two $\cT$-families $(W_\cT^i,q_i, f_i: W_\cT^i \ra S)$, $i \in \{1,2\}$ of semi-stable quadratic bundles with frames   on $X \times \cT$ are {\it equivalent} if there exists a line bundle $M$ on $\cT$ of order two and an isomorphism

$$\theta: W_\cT^1 \ra W_\cT^2 \otimes p_\cT^* M,$$
of vector bundles on $X \times \cT$ such that the following diagram  commutes
\begin{equation}
\xymatrix{
W_\cT^1 \ar[rr]^{q_1} \ar[d]^{\theta} && W^{1*}_\cT \\
W_\cT^2 \otimes p_\cT^* M \ar[rr]^{q_2 \otimes 1_{p_\cT^* M}} && (W_\cT^2 \otimes p_\cT^* M)^*  \ar[u]^{\theta^*}
}
\end{equation} 
and there exists a non-zero scalar $\lambda \in \CC$ such that the following diagram also commutes
\begin{equation}
\xymatrix{
W_\cT^1 \ar[r]^\theta \ar[d]^{f_1} & W_\cT^2 \otimes p_\cT^* M \ar[d]^{f_2} \\
S \ar[r]^\lambda & S \otimes p_\cT^* M
}
\end{equation}

In practice, we will only need $\cT$ to be an affine spectrum of a DVR. In which case any line bundle $M$ must be trivial since the class group of $\cT$ is trivial being a UFD. The above condition reduces simply to demanding the commutation of the following diagram
\begin{equation}
\xymatrix{
W_\cT^1 \ar[rr]^{q_1} \ar[d]^{\theta} && W^{1*}_\cT \\
W_\cT^2   \ar[rr]^{q_2 } && (W_\cT^2 )^*  \ar[u]^{\theta^*}
}
\end{equation}
and that of 
\begin{equation}
\xymatrix{
W_\cT^1 \ar[r]^\theta \ar[d]^{f_1} & W_\cT^2 \ar[d]^{f_2} \\
S \ar[r]^\lambda & S 
}
\end{equation}

In particular, if $\cT= \Spec(A)$ for $A$ a DVR, and $ \lambda^2 \in A^\times$, the framed quadratic bundles $(V,q)$ and $(V,\lambda^2q)$ on $X \times \cT$ are equivalent.
 
\begin{thm} \label{ssred} The functor $F: \{Schemes \}  \ra \{Sets \}$ that associates to a scheme $\cT$ the set of equivalence classes of $\cT$-families of semi-stable framed quadratic bundles $(W_\cT,q, f^W: W_\cT \ra p_X^* S) \ra X \times \cT$ for $\delta=1$ is proper.
\end{thm}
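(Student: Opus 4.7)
By the valuative criterion of properness, we may reduce to the case $\cT = T = \Spec(A)$ for a discrete valuation ring $A$ with uniformizer $\pi$ and fraction field $K$, and the task becomes extending a family $(W_{T^*}, q_{T^*}, f^W_{T^*})$ of semi-stable $\delta = 1$ framed quadratic bundles over $T^*$ to one over $T$, up to the stated equivalence and after a possibly finite base change of $A$. The plan proceeds in three stages: first, extend the underlying framed module via the properness of its moduli; second, extend the quadratic form as a rational section and normalize using the equivalence; and third, perform Hecke modifications along the special fiber to correct the behavior of $q$.

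For the first stage, Theorem \ref{modspace} shows that the moduli of semi-stable framed modules is projective, so its valuative criterion yields, after finite base change of $A$, a semi-stable extension $(W_T, f^W_T)$ of $(W_{T^*}, f^W_{T^*})$ on $\Sigma = X \times T$. By Proposition \ref{delta=1surj} the extended framing $f^W_T$ is surjective, and by Proposition \ref{kerstable} the restriction $K_p := \ker(f^W_p)$ is a semi-stable vector bundle; consequently the underlying framed module is already semi-stable on the special fiber. For the second stage, $q_{T^*}$ is a regular section of the torsion-free sheaf $\Sym^2 W_T^*|_{\Sigma^*}$; since $\Sigma \setminus \Sigma^* = Y$ is a Cartier divisor, $q_{T^*}$ extends to a rational section with a pole of some order $n \geq 0$ along $Y$. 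Multiplying by $\pi^n$, which is permitted by the equivalence relation (passing to a ramified cover of $A$ if $n$ is odd), yields $q_T \in \Gamma(\Sigma, \Sym^2 W_T^*)$ with $q_T|_Y \neq 0$.

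Third, if the resulting triple $(W_p, q_p, f^W_p)$ fails semi-stability, then Proposition \ref{saturated} shows (when $S \neq \{0\}$) that any destabilizing subsheaf with non-zero induced framing saturates to a subbundle $F_p \subset W_p$ satisfying $F_p^\perp \subset F_p$. The elementary Hecke modification $W_T' \subset W_T$ determined by $F_p \subset W_p$ carries the framing (surjectivity is preserved generically and then on all of $T$ by extending the modified frame by the equivalence) and transports $q_T$ while preserving symmetry. One expects a numerical invariant of the special fiber --- such as the maximal framed-slope of a destabilizing subbundle, or the length of $\ker(q_p)$ measured in $\Shalf$ --- to decrease strictly under each such modification, so the procedure terminates after finitely many steps in a semi-stable extension.

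The main obstacle will be controlling the third stage. One must verify simultaneously that the Hecke modification preserves the $\Gamma$-equivariance (in the related picture on $Y$) and the symmetric bilinear nature of $q_T$, that generic non-degeneracy is maintained on $\Sigma^*$, and --- most delicately --- that a well-chosen numerical invariant is strictly monotone so the process terminates. This is the framed-quadratic analogue of the classical subtlety in semi-stable reduction for orthogonal bundles as treated via Bruhat-Tits methods in the work of Balaji-Seshadri, Balaji-Parameshwaran and Heinloth, and here the extremal choice $\delta = 1$ is essential: Propositions \ref{kerstable}, \ref{tenprod} and \ref{2ndsymresult} reduce $\delta = 1$ semi-stability of the framed quadratic object to ordinary semi-stability of the vector bundle $\ker(f)$ and guarantee compatibility with the tensor and symmetric-square operations which govern the monotone invariant, bypassing the need for any Bruhat-Tits input.
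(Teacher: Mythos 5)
Your stages one and two point in roughly the right direction, but the proposal has two genuine problems. First, in stage one you cannot simply invoke projectivity of the coarse moduli space $\cM^{ss}_\delta$ of framed modules: the valuative criterion for a coarse space extends the classifying map $T^*\ra \cM^{ss}$, not the family itself, and even a semi-stable framed-module limit is known to pick up torsion in $W_Y$ (a rank-zero subsheaf mapping into $\Shalf$ is not excluded by $\delta$-semi-stability, and Proposition \ref{kertor} only controls $\ker(f)\cap T(W)$). A locally free $W_Y$ is indispensable for everything that follows (semi-stability of $\Sym^2W_Y^*$, the degree count for $\ker(q_Y)$). The paper avoids this by never extending $W$ directly: since $\delta=1$ forces $f$ surjective with $\ker(f)=V$ a family of semi-stable bundles of negative degree, it extends $V$ across the special fibre as a polystable bundle, then extends the extension class $[0\ra V\ra W\ra q_T^*S\ra 0]$ over $T$ by trivializing $V$ near $\supp(S)\times T$ and using properness of the projective space of lines in $\Hom(\cO_P^n,\Ext^1_X(S,\cO_X))$; dualizing twice then yields a locally free $W_Y$. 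Your stage two (clear the pole along $Y$ by a power of $\pi$, Nakayama gives $q_Y\neq 0$) matches the paper's Lemma \ref{resnonzero}, though you omit the verification that $\ker(q_Y)$ is a subbundle of degree zero, which is part of the definition of the limiting objects.

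The third stage is where the proposal goes wrong in substance. There is nothing to correct: semi-stability of the triple is \emph{defined} through the underlying framed module (Definition \ref{ssdeforfs}), and that is already semi-stable on the special fibre because its kernel $V_Y$ was chosen polystable; the quadratic form is \emph{permitted} to degenerate in the limit --- that is precisely the content of the theorem and of Remark \ref{limitingobjects}. So no Hecke modification is needed, and none would work: Remark \ref{diff} records that the modification $W\subset U$ which renders $q$ non-degenerate cannot carry the framing when $\Shalf\neq 0$, since $\Tor_1^\Sigma(Q,\cO_Y)\hra W_Y$ need not split. The descending-invariant iteration you sketch is therefore both unnecessary for this theorem and unavailable in the framed setting; it is only in the unframed, everywhere-non-degenerate case (Section \ref{polystableorth}) that the paper performs a single Hecke modification by the ``half'' of $Q$, and even there it is one explicit step, not a terminating induction.
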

\begin{proof}
We assume we are given a family $(W_{\Sigma^*},q^W_{\Sigma^*},f^W_{\Sigma^*}:W_{\Sigma^*} \ra p_X^* S)$ of semi-stable framed quadratic bundles.

For an arbitrary $\delta$-semi-stable $T$-family of framed modules (or level structures) $f_\Sigma: W_\Sigma \ra q^*_T S $ is known to pick up torsion i.e $W_Y$ may have torsion even when $W_{\Sigma^*}$ is torsion free.
In the following proposition, for $\delta=1$, using that $\ker(f_\Sigma)$ is a family of semi-stable vector bundles and properness of latter objects, we show that $W_Y$ will be
torsion-free. 

\subsection{Extension of framed module} \label{extn-ul-fm}
\begin{prop}  In the equivalence class of the family $(W_{\Sigma^*},f^W_{\Sigma^*})$ there is another family which extends to the divisor $Y \hra \Sigma$  as a semi-stable framed module for $\delta=1$ such that $W_Y$ is a vector bundle on $Y$.
\end{prop}
\begin{proof} Since $\delta=1$, we have two consequences:
\begin{enumerate}
\item the $\ker(f^W)=V_{\Sigma^*}$ is a family of semi-stable vector bundles (of negative degree) by Prop \ref{kerstable}.
\item for any $t \in T^*$, the framing maps $f^W_t$ are surjective by Prop \ref{delta=1surj}, and hence {\it non-zero}. Hence for any $t \in T^*$ we have a short exact sequence  $0 \ra V_t \ra W_t \ra S \ra 0$, or equivalenltly an element of $\Ext^1_X(S,V_t)$.
\end{enumerate}
We extend $V_{\Sigma^*}$ to a family $V_{\Sigma}$ on the whole of $\Sigma$ as a poly-stable vector bundle by properness of moduli space of polystable vector bundles of negative degree. We may trivialize $V_{\Sigma}$ in a neighbourhood of the divisor $D:=P \times T \hra \Sigma$ to fix an isomorphism of the {\it stalks} at $D$
\begin{equation} 
\psi_D: (V_{\Sigma})_D  \ra q_T^* \cO_P^n,
\end{equation} 
where $\cO_P$ denotes the stalk of $\cO_X$ at $P \hra X$. On the other hand, dualizing the sequence
\begin{equation}
0 \ra V_{\Sigma^*} \ra W_{\Sigma^*} \ra q_{T^*}^* S \ra 0,
\end{equation}
we get
\begin{equation} \label{dualeqn}
0 \ra W_{\Sigma^*}^* \ra V_{\Sigma^*}^* \ra \Ext_{\Sigma^*}^1(q_{T^*}^* S,\cO_{\Sigma^*}) \ra 0. 
\end{equation}
Now let us consider  the projection map $$p_{P_d}=q_{T^*}: D_d^*= P_d \times T^* \ra P_d.$$
One has the natural morphism 
\begin{equation} \label{locfree1} q_{T^*}^* \cE xt^1_X(S,\cO_X) \ra \cE xt_{\Sigma^*}^1(q_{T^*}^* S,\cO_{\Sigma^*}) 
\end{equation} which is checked to be an isomorphism. This can be seen as follows. We express the sky-scraper sheaf $S= \oplus_{1 \leq e \leq d} S_e$ as a direct sum of  sheaves $S_e$ which are {\it free modules over $\cO_{P_e}$} for $e \leq d$. The natural morphism 
\begin{equation} q_{T^*}^* \cE xt^1_X(S_e,\cO_X) \ra \cE xt_{\Sigma^*}^1(q_{T^*}^* S_e,\cO_{\Sigma^*}) 
\end{equation} is  an isomorphism because these sheaves are supported on $D_e^* \hra \Sigma^*$ where they are free. Note here that the depth of $\Ext^1_X(S,\cO_X)$ is the same as that of $S$, which is $d$. 

The trivialization $\psi_D$ restricted to $D^*$ gives us a morphism
\begin{equation}
\cH om_{D_d^*}(V_{\Sigma^*}^*|_{D_d^*}, q_{T^*}^* \cE xt^1_X(S,\cO_X)) \stackrel{\psi_D}{\lra} \cH om_{D_d^*}(q_{T^*}^* \cO_X^n|_{P_d^*},q_{T^*}^* \cE xt^1_X(S,\cO_X))
\end{equation}
Now again we have a natural morphism 
\begin{equation} \label{locfree2}
q_{T^*}^* \cH om_{P_d^*}(\cO_{P_d}^n, \cE xt^1_X(S,\cO_X)) \ra 
\cH om_{D_d^*}(q_{T^*}^* \cO_X^n|_{P_d^*},q_{T^*}^* \cE xt^1_X(S,\cO_X))
\end{equation} which is checked to be an isomorphism by decomposing $S=\oplus_e S_e$ as before where $S_e$ is supported on $P_e \hra X$. Then the summand corresponding to $S_e$ is supported on  $D_e^* \hra \Sigma^*$ where that summand is free. 

By the isomorphisms $\psi_D$, (\ref{locfree1}) and (\ref{locfree2}), by restriction to $D^*$  the morphism $V_{\Sigma^*}^* \ra \Ext_{\Sigma^*}^1(q_{T^*}^* S,\cO_{\Sigma^*})$ of (\ref{dualeqn}) furnishes a no-where vanishing  section 
 \begin{equation} H^0(D^*_d,q_{T^*}^* \cH om_{P_d}(\cO_{P_d}^n, \cE xt^1_X(S,\cO_X)))
 .
 \end{equation}
 By adjunction under the finite and affine projection morphism $p_{T^*}: D_d^* \ra T^*$, identifying $\cO_{D_d^*} = p_{T^*}^* \cO_{T^*}$ we obtain a global no-where zero morphism 
 \begin{equation}
 \cO_{T^*} \ra (p_{T^*})_* q_{T^*}^*  \cH om_{P_d}(\cO_{P_d}^n, \cE xt^1_X(S,\cO_X)).
  \end{equation}
 
We obtain a well-defined morphism \begin{equation} 
T^* \ra \PP(\Gamma(D_d,q_{T^*}^* \Hom_{P_d}(\cO_{P_d}^n, \Ext^1_X(S,\cO_X)))),
\end{equation}
where the projective space denotes the space of $1$-dimensional subspaces.
We extend this to a morphism $T \ra \PP$. Retracing the
 above steps backwards by replacing the roles of $T^*$ and $D^*_d$ by $T$ and $D_d$ respectively, 
 we obtain a $T$-family of non-zero extensions
 \begin{equation}
 0 \ra W^*_\Sigma \ra V^*_\Sigma \ra q_T^* \Ext^1_X(S,\cO_X) \ra 0
 \end{equation}
 which on dualizing defines
\begin{equation}
0 \ra V_\Sigma \ra W_\Sigma \ra q_T^*S \ra 0.
\end{equation}
Here we have used that $q_T^* \Ext^1_X(S,\cO_X)=\Ext^1_\Sigma(q_T^*S, \cO_\Sigma)$ and \begin{equation}
\Ext^1_\Sigma(\Ext^1_\Sigma(q_T^*S, \cO_\Sigma),\cO_\Sigma) = q_T^*S
\end{equation}
canonically because $S$ is a sky-scraper sheaf on $X$.

 Over $\Sigma^*$, the new projection maps $W_\Sigma \ra q_T^*S$ differ from $f^W_{\Sigma^*}$ by non-zero scalar $\lambda \in A^*$. Thus as a family of framed modules the new and old family are equivalent.

Note also since $V_Y$ is semi-stable as a vector bundle, so $f_Y: W_Y \ra S$ is semi-stable as a framed module for $\delta=1$. Dualizing the sequence 
\begin{equation}
0 \ra W_Y^* \ra V_Y^* \ra Ext^1(S,\cO_X) \ra 0
\end{equation} 
we get $0 \ra V_Y \ra (W_Y^*)^* \ra S \ra 0$. We take $W_Y$ to be $(W_Y^*)^*$. So it follows that $W_Y$ is torsion-free. 
\end{proof}

{\it We will replace the old family by the new in the rest of the proof and continue to denote the new projection maps also by $f^W_\Sigma$ to not introduce new notation.}

Note that by Theorem \ref{modspace}, only the $S$-equivalence class of $f^W_Y: W_Y \ra S$ is well-defined. Our remaining construction depends on this choice. 


\subsection{Extension of quadratic form} \label{extn-qf}
For the following proposition we choose a {\it single} arbitary base point $x_0 \in X$ and denote temporarily $H= x_0 \times T$.
The vector bundle $W_\Sigma$ restricted to the divisor $H  \hra S$ is the trivial module, say $W_{H}$, of rank $n$ over $A$. Further restriction to $H^* \hra H$ furnishes the map $$W_{H} \hra W_{H^*},$$
realizing $W_{H}$ as a lattice in $W_{H^*}$.

\begin{prop} There exists an appropriate basis $\{e_i^W \}$ of $W_{H}$, such that in the induced basis $\{e_i^W \otimes_A 1_K \}$ of $W_{H^*}$,  the quadratic form may be expressed over $H^*$ in terms of variables $X_i$ as the diagonal form
$$q^W_{\Sigma^*}(x_0)= \sum_i q^W_i(x_0,\pi) X_i^2: W_{H^*} \ra K$$
where $q^W_i(x_0,\pi) \in K \setminus \{ 0 \}$.
\end{prop}
\begin{proof}
Let $q^W_{\Sigma^*}(x_0)$ be the quadratic form on $W_{H^*}$ and let $B_{x_0}$ be the associated bilinear form {\it taking values in} $K$ on the {\it lattice} $W_{H}$ of $ W_{H^*}$. Choose a basis $\{ e_i \}$ of $W_{H}$. We define 
\begin{equation} \label{wmin} w_{min} = min_{B_{x_0}(e_i,e_j) \neq 0} val(B_{x_0}(e_i,e_j))
\end{equation}
This minimun exists because not all $B_{x_0}(e_i,e_j)$ are zero, since the form $q^W_{\Sigma^*}(x_0)$ is non-degenerate.
If $w_{min}$ is attained at a diagonal entry $B_{x_0}(e_i,e_i)$ (for some $i$) then we relabel our basis interchanging $e_1$ and $e_i$. Now suppose that $w_{min}$ is not attained at any diagonal entry, but say rather at $B_{x_0}(w_i,w_j)$. Then  we observe $$val(B_{x_0}(e_i+e_j,e_i+e_j))=val(B_{x_0}(e_i,e_j)),$$
because $val(B_{x_0}(e_i,e_j))$ is strictly less than $val(B_{x_0}(e_i,e_i))$ and $val(B_{x_0}(e_j,e_j))$. By a uni-modular transformation, it is possible to choose a new basis of $W_{H}$ with $e_1 :=e_i+e_j$. In conclusion, it is possible to choose a basis $\{ e_t \}$ of $W_{H}$ such that
\begin{enumerate}
\item $val(B_{x_0}(e_1,e_1)) \leq val(B_{x_0}(e_i,e_j))$ whenever $B_{x_0}(e_i,e_j) \neq 0$,
\item $B_{x_0}(e_1,e_1) \neq 0$.
\end{enumerate}
Now, put $e^W_1=e_1$ but replace $\{e_i\}_{i \geq 2}$ by $\{e'_i\}_{i \geq 2}$ where 
$$e'_i = e_i - \frac{B_{x_0}(e_i,e_1)}{B_{x_0}(e_1,e_1)} e_1.$$
Note that if $B_{x_0}(e_i,e_1) =0$ then $e'_i=e_i$, and otherwise $val(\frac{B_{x_0}(e_i,e_1)}{B_{x_0}(e_1,e_1)}) \geq 0$, which means $\frac{B_{x_0}(e_i,e_1)}{B_{x_0}(e_1,e_1)}) \in A$. It follows thus in both cases that replacing $\{e_i\}$ by $\{e^W_1 \cup e'_i\}$ is a unimodular operation, and thus $\{e^W_1 \cup e'_i\}$ continue to be a basis of the lattice $W_{H}$. Now the submodule spanned by $\{e'_i\}_{i \geq 2}$ is orthogonal to $e^W_1$ and we may apply the above process to it. Thus we will get a basis $\{e^W_i\}$ of the lattice $W_{H}$ in terms of which, the quadratic form $q'_K(x_0)$ becomes diagonal as asserted in the proposition.
\end{proof}

\begin{prop} The function $X \ra \mathbb{N}$ that to $x$ associates $w_{min}^x$ as in (\ref{wmin}) is generically constant and upper semi-continous.
\end{prop}
\begin{proof} Let $U_0$ be a small open affine neighbourhood around $x$ such that $W$ becomes trivial when restricted to it. We choose a basis $\{f_i \}$ of $W|_{U_0}$. In terms of $\{f_i \}$ the quadratic form is given as a symmetric matrix with entries $B^{U_0}(u, \pi)_{ij} $ which we write as rational functions in $\pi$ with regular coefficients as follows:
\begin{equation}
B^{U_0}(u, \pi)_{ij} = \frac{\sum_{k=0}^{n_{ij}} B^{{U_0},k}_{ij}(u) \pi^k}{\pi^{a_{ij}}}
\end{equation}
where $a_{ij} \geq 0$, $n_{ij}$ only depend on ${U_0}$, $B^{U_0,k}_{ij}$ are regular functions on ${U_0}$ and $B^{U_0,0}_{ij}$ is not the zero function. Now for $y \in {U_0}$ lying outisde the zero locus of $B^{{U_0},k}_{ij}$ for varying $k,i$ and $j$, we have  $$w_{min}^y = - max_{ij} a_{ij}$$ and for the remaining finitely many points in $z \in \cup_{ij} W(B^{{U_0},k}_{ij})$, $w_{min}^z$ is only greater. Using standard arguments of valuations and the property that $val(a+b) \geq min \{val(a), val(b)\}$ we see that the function $x \ms w_{min}^x$ is upper semi-continous.
\end{proof}
Hence by the above proposition, we may attach a number $w$, namely the generic value of $w_{min}^x$ as $x$ varies in $X$.

For any number $m$ greater than $\frac{|S|}{\rank(W)}$, we choose $m$ points in $X$ with $w_{min}^x=w$ not lying in the support of $S$. {\it Henceforth, we call this collection as our base point $x_0$}.

We make the simple observation that the restriction of $q^W_{\Sigma^*}(x_0)$ to the lattice $W_{H} \hra W_{H^*}$ is again the same diagonal form $$\sum_i q^W_i(x_0,\pi) X_i^2: W_{H} \ra K$$ with respect to basis $\{ e_i^W \}$.

By going to a cover of $T$, we may assume that the valuations of $q^W_i(x_0,\pi)$ with respect to the uniformizing parameter $\pi$ are even, say $2w_i$.  Without loss of generality, we may assume that the valuation of $q^W_1(x_0,\pi)$ is minimum. We set
\begin{equation}
q_K=\pi^{-2w_1}q^W_{\Sigma^*}.
\end{equation}
Therefore expressing $q_K(x_0,\pi)=\sum_i q_i(x_0,\pi) X_i^2$, all the coefficients have now become regular with that of $q_1(x_0,\pi)$ being a unit. We note that \begin{equation} \label{val}
val(q_i(x_0,\pi))=: 2v_i \geq 0,
\end{equation} and denote the regular extension of $q_K$ at the base point $x_0$ as
\begin{equation} \label{qatx0}
q_A(x_0).
\end{equation}

As a $T^*$-family of quadratic bundles $(W_{\Sigma^*},q^{W}_{\Sigma^*})$ and $(W_{\Sigma^*}, \pi^{-2 w_1}q^{W}_{\Sigma^*})$ are isomorphic, since we have only multiplied $q^{W_{\Sigma^*}}$ with a single scalar, namely $\pi^{-2 w_1}$. So we may work rather with the latter and {\it we shall do so in the remaining part of the proof}. 

\begin{prop} The quadratic form $q^{W}_{\Sigma^*}$ extends regularly on the whole of $S$ as a section $q^{W}_\Sigma$ of  $\Sym^2W^*_\Sigma$ over $ S$. Its restriction $q^{W}_Y: \cO_{Y} \ra \Sym^2W^*_Y$ over $Y$ to the vector bundle $W_Y$ is non-zero.
\end{prop}
\begin{proof} Either $q^{W}_{\Sigma^*}$ extends regularly or it has a pole of order $k \geq 1$ along the divisor $Y$. In the latter case, let us derive a contradiction. We consider $q^{k,{W}}_\Sigma:= \pi^k q^{W}_{\Sigma^*}$ which will extend as a non-zero section of $\Sym^2W^*_\Sigma$.
\begin{lem} \label{resnonzero} The restriction $q^{k,{W}}_Y$ of the section $q^{k,{W}}_\Sigma$ to the special fiber $Y$ is  also {\it non-zero}.
\end{lem}
\begin{proof} To see this, consider the $A$-module $\Gamma(S, \Sym^2 W^*_{S})$. It contains a rank one sub-module $M$ generated by the section $q^{k,{W}}_A$ by mapping $1 \in A$ to $q^{k,{W}}_\Sigma \in  M$. Let us denote the surjective map again by $q^{k,{W}}_\Sigma: A \ra M$. Tensoring by the residue field, the section $q^{k,{W}}_Y: k(A) \ra M/\pi M$ remains surjective. To check that $q^{k,{W}}_Y \neq 0$, it suffices to check that $M/\pi M \neq \{0\}$. Now $M \neq \{0\}$ because $q^{k,{W}}_\Sigma \neq 0$. Thus by Nakayama Lemma, it follows that $M \otimes_A A/\pi A \neq \{0\}$. It is therefore a one dimensional vector space generated by the residue field $k(A)$ of $A$. This  makes the map $q^{k,{W}}_Y: k(A) \ra M/\pi M$ an isomorphism. Consequently, $q^{k,{W}}_Y$ is {\it non-zero}.
\end{proof}
 Now this contradicts the semi-stability of $\Sym^2W^*_Y$ as follows. We have the following factorization
\begin{equation}
\xymatrix{
\cO_{Y} \ar[r]^{q^{k,{W}}_Y} \ar[d] & \Sym^2W^*_Y \\
\cO_{Y}(x_0) \ar@{.>}[ru] &
}
\end{equation}
because $q_{S}(x_0)$ extended as a non-zero quadratic form on $x_0$ to all of $T$ in (\ref{qatx0}) . Now composing 
$\cO_{Y}(x_0) \ra \Sym^2W^*_Y$ is a non-zero map contradicting the semi-stability of $\Sym^2V^*_Y$.

In conclusion $q^{W}_{\Sigma^*}$ itself extends as the section $q^{W}_\Sigma: \cO_{\Sigma} \ra \Sym^2W^*_{\Sigma}$. Moreover by repeating arguments of Lemma \ref{resnonzero}, we see that its restriction $q^W_Y: \cO_{Y} \ra \Sym^2W^*_Y$ is non-zero.
\end{proof}

Consider the sequence
\begin{equation}
W_Y \stackrel{q^W_Y}{\ra} W^*_Y
\end{equation}
and let us denote the kernel by $K_Y$ and the quotient by $Q_Y$. Note that since $q^W_Y \neq 0$, so $K_Y \neq W_Y$ and $W^*_Y \neq Q_Y$. Now the map 
\begin{equation}
W_Y/\ker(q^W_Y) \ra \ker(W^* \ra Q_Y)
\end{equation}
is generically an isomorphism being induced by $q^W_Y$ which is non-zero. Hence $\deg(W_Y/\ker(q^W_Y)) \leq 0$ which implies that $\deg(\ker(q^W_Y)) \geq 0$. Also $\deg(\ker(W^* \ra Q_Y)) \geq 0$ which implies that $\deg(Q_Y) \leq 0$. Hence by the equality 
\begin{equation}
\deg(\ker(q^W_Y)) - \deg(W_Y) + \deg(W_Y^*) - \deg(Q_Y)=0
\end{equation}
 it follows that $\deg(\ker(q^W_Y))=0$. Since $\ker(q^W_Y) \hra W_Y$ so it is torsion free and hence locally free since $Y$ is a curve.
 
Hence $(W_Y,q^W_Y,f^W_Y:W_Y \ra S)$ is the sought framed quadratic bundle which is $\delta=1$ semi-stable.

Finally note that in place of $f^W_Y: W_Y \ra S$, had we taken some other framed module as extension of $f^W_{\Sigma^*}:W_{\Sigma^*} \ra q_T^*S$, then they would be $S$-equivalent as framed modules by Theorem \ref{modspace}, and therefore $S$-equivalent as quadratic framed modules by Definition \ref{S-def}.
\end{proof}

By Lemma \ref{resnonzero}, it follows that $\rank(\ker(q^W_Y)) < \rank(W)$.
Note that when $\rank(\ker(q^W_Y)) >0$, then the induced framing on $\ker(q^W_Y)$ must be non-zero, else it will be destabilizing because the degree of $W_Y$ is also zero.

\begin{rem} \label{limitingobjects}
The induced map $\ol{q^W_Y}: W/\ker(q) \ra (W/\ker(q))^*$ being generically an isomorphism between vector bundles of degree zero, must be an isomorphism. Also the sequence
\begin{equation}
\ker(q^W_Y) \hra W_Y \ra S
\end{equation}
furnishes $W/\ker(q) \ra S'$ where $S'$ is a certain quotient of $S$. Reinterpreting this data over $X$ by taking invariant direct image of $(f^W_Y: W_Y \ra S,q^W_Y)$, we see that the compactification of vector bundles of fixed degeneracy locus and type is done by everywhere degenerate bundles whose degeneracy is smaller than that of $S$ in the following sense. Since $S'$ is a quotient of $S$, so the type of degeneracy it induces on $W/\ker(q)$ are smaller than those induced by $S$. 
\end{rem}

We finish this section with a simple observation which proves properness when the quadratic form has "singularities of order one".

\begin{thm} \label{depth1} The functor $F^{ss, \ZZ/2}_S$ is proper when the sky-scraper sheaf $S$ has depth one for small values of $\delta$. 
\end{thm}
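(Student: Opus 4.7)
The plan is to reduce the depth one case to the already known properness of the moduli of $\Gamma$-$\SO_n$-bundles on the double cover $Y$, by showing that in depth one the framing datum is redundant.

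First, I would invoke the equivalence of categories from Theorem \ref{eqcat} (in its extended form in subsection \ref{eqcatdeg}) to translate an object of $F^{ss,\ZZ/2}_S$ into a $\ZZ/2$-quadratic bundle $(W,q')$ on the two-sheeted cover $p:Y\ra X$ with framed structure $f:W\ra \Shalf$, where $\Shalf$ is the ``half'' of $p^*S$ constructed in Section~\ref{equi}. When $S$ has depth one, the support of $\Shalf$ is exactly the ramification locus $\Ram(p)$ and each stalk $\Shalf_y$ is one-dimensional (so $\Shalf_y\simeq k(y)$). At each such $y$, the requirement that $\ZZ/2$ acts trivially on $\Ker(f)_y$ forces the $-1$-eigenspace of the Galois action on $W_y$ to coincide with the unique line mapping isomorphically to $\Shalf_y$ under $f_y$.

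Second, I would show that this observation makes the framing $f$ redundant: once the $\ZZ/2$-action on $W$ is fixed, the sub-line at each ramification point which is the $-1$-eigenspace is canonical, and the framing $f$ is recovered as the projection onto this eigen-line (and is unique up to the scalar acknowledged in the equivalence relation, since $\Shalf_y$ is one-dimensional). Consequently the category of $\ZZ/2$-orthogonal bundles $(W,q')$ with framing into a depth-one $\Shalf$ is equivalent to the category of $\ZZ/2$-orthogonal bundles on $Y$ with prescribed local $\ZZ/2$-type at each ramification point (a ``$\pm1$ splitting'' of the fiber with the $-1$-eigenspace of rank one). In the language of \cite{vbcss}, this is exactly a $(\ZZ/2,\SO_n)$-bundle with fixed local type on $Y$.

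Third, for sufficiently small values of $\delta$ one checks that the semistability condition from Definition \ref{ssdeforfs} on the framed quadratic bundle coincides with the usual semistability for the underlying $\ZZ/2$-equivariant orthogonal bundle; this is because $\delta\cdot|\Shalf|$ becomes negligible in slope comparisons involving saturated $\ZZ/2$-invariant sub-bundles, and the extra constraint at ramification points is a closed condition that does not interact with slope inequalities. Thus our functor becomes an open/closed sub-functor of the functor of semistable $(\ZZ/2,\SO_n)$-bundles on $Y$ with fixed local type.

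Fourth, I would then invoke the properness of the moduli of polystable equivariant principal bundles on a curve with prescribed local type, for instance the results for $(\Gamma,\SO_n)$-bundles in \cite{vbcss} (and earlier \cite{bs}, \cite{parmu}, \cite{heinlothssred}), to conclude that $F^{ss,\ZZ/2}_S$ is proper. The main subtlety, and where I expect the real work to lie, is the verification that ``depth one'' truly forces the framing to be recoverable from the Galois action in families (not just pointwise) and that the matching of local types on the special fiber under semi-stable reduction produces the same $S$-equivalence class as the one dictated by our Definition~\ref{S-def}; once that compatibility is established the properness is inherited directly from the referenced works.
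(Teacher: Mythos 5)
Your proposal follows essentially the same route as the paper: in depth one the framing is recovered as the $-1$-eigenspace of the Galois action on the fibers at ramification points (so the framed functor coincides set-theoretically with the plain $\ZZ/2$-orthogonal functor), for small $\delta$ the framed semi-stability inequality reduces to ordinary semi-stability of the underlying degree-zero bundle, and properness is then imported from the $\Gamma$-$\SO_n$ result of \cite{vbcss}. The paper's proof is terser and does not dwell on the family-versus-pointwise subtlety you flag at the end, but the argument is the same.
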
  
\begin{proof} Let $V(y)_{-1}$ denote the $-1$ eigen-space for the $\ZZ/2$-action on $V_y/m_y$. Then in multiplicity one, we have an isomorphism of sheaves 
$$S=  \oplus_{y \in \ram(p)} V(y)_{-1}.$$
So set-theoretically the functor $F^{ss, \ZZ/2}_S$ equals the functor $F^{ss,\ZZ/2}$ (or $F$ for short) which forgets the frame structure and takes semi-stability in the sense of Seshadri \cite[CSS]{css}. Moreover, since there are finitely many choices of $\rank(W_1)$ of sub-bundles $W_1$ and $\epsilon(W_1)$, for small $\delta$ the inequality (\ref{ssdeforfs}) holds if and only if $\deg(W_1) \leq 0$ holds. This means that  the underlying vector bundle $W$ is semi-stable since its degree is zero because it supports an every-where non-degenerate quadratic form. Recall that in \cite[CSS]{css} a $\ZZ/2$-vector bundle is defined to be semi-stable if the underlying vector bundle is semi-stable. So the semi-stable objects associated by  the functor $F^{ss, \ZZ/2}_S$ and  the functor $F^{ss,\ZZ/2}$ are in bijection. Now the claim follows from the properness of moduli of $\ZZ/2$-orthogonal bundles \cite{vbcss}. 
\end{proof}

\section{Sky-scraper sheaf $S$ as discrete invariant} \label{s-di}
\begin{prop} \label{disceteinvariant} Fix a section $s \in \Gamma(X,\det(V^*)^2)$. On the fiber $disc^{-1}(s)$, the sky-scraper sheaf $V^*/q(V)$ defines a discrete invariant.
\end{prop}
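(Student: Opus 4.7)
The plan is to prove the proposition by showing that the isomorphism class of $S := V^*/q(V)$ takes values in a finite, combinatorially determined set indexed by tuples of partitions, and hence defines a discrete invariant on $disc^{-1}(s)$.

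First, I would pin down the support and local length of $S$ via the Fitting--discriminant identity $\mathrm{Fitt}^0(\mathrm{coker}(q)) = (\det q) = (s)$. Since $s$ is a fixed non-zero section of $L = \det(V^*)^2$, its zero divisor $Z(s) = \sum_{x} n_x \cdot x$ is a fixed effective divisor on $X$, so the support of $S$ equals $Z(s)$ and $\mathrm{length}_{\cO_{X,x}}(S_x) = n_x := \mathrm{ord}_x(s)$ is fixed at each point $x \in Z(s)$.

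Second, I would apply the structure theorem for finitely generated torsion modules over a discrete valuation ring at each point $x \in Z(s)$. Since $\cO_{X,x}$ is a DVR, the stalk $S_x$ decomposes as $S_x \simeq \bigoplus_i \cO_{X,x}/(\pi_x^{e_{x,i}})$ for a uniquely determined partition $\lambda_x = (e_{x,1} \geq e_{x,2} \geq \cdots)$ of the fixed integer $n_x$. The isomorphism class of $S$ as an $\cO_X$-module is therefore encoded by the tuple of partitions $(\lambda_x)_{x \in Z(s)}$. As $Z(s)$ has finitely many points and each $n_x$ admits only finitely many partitions, this encoding assigns to each $(V, q) \in disc^{-1}(s)$ an element of a finite (discrete) index set.

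Third, for a $T$-family $(V_T, q_T)$ with $\det q_T = s$, the higher Fitting ideals $\mathrm{Fitt}^i(S_T)$ commute with base change, and their local valuations at each $x$ are upper semi-continuous in $T$; since the partitions $\lambda_x$ are reconstructed from successive differences of these valuations, the locus in $disc^{-1}(s)$ realizing a given tuple $(\lambda_x)$ is locally closed. I do not foresee a genuine obstacle: the argument uses only the Fitting--discriminant identity, the classification of torsion modules over a DVR, and standard semi-continuity of Fitting ideals under base change.
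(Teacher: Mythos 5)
Your first two steps are correct but only set up the problem: the identity $\mathrm{Fitt}^0(\coker q)=(\det q)=(s)$ fixes the support and the local lengths $n_x=\mathrm{ord}_x(s)$, and the structure theorem over the DVR $\cO_{X,x}$ encodes $S_x$ by a partition of $n_x$. The gap is in your third step. What ``discrete invariant'' has to mean here --- and what the paper's proof actually sets out to establish --- is \emph{local constancy}: for a family $(V_T,q_T)$ over the spectrum of a DVR with constant discriminant and with $\coker(q_t)\simeq S$ over the generic point, the special fibre again satisfies $\coker(q_0)\simeq S$. Your argument only shows that the invariant takes values in a finite set and that each stratum is locally closed; that is true of any constructible invariant and is perfectly compatible with jumping. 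Indeed upper semicontinuity of the valuations of the higher Fitting ideals points the wrong way: it allows the partition to degenerate at special points while the total length stays fixed. Concretely, over $\cO=\CC[[\pi]]$ the symmetric family $q_t=\left(\begin{smallmatrix} t & \pi \\ \pi & 0\end{smallmatrix}\right)$ has $\det q_t=-\pi^2$ for every $t$, yet $\coker(q_t)\simeq\cO/\pi^2$ for $t\neq 0$ while $\coker(q_0)\simeq(\cO/\pi)^2$; the partition of $n_x=2$ jumps from $(2)$ to $(1,1)$ with the discriminant held fixed, and every assertion in your step 3 holds for this family. So your proof, as written, cannot be complete: it would ``prove'' constancy in a situation where the invariant visibly jumps, which shows that some additional input beyond the Fitting--discriminant identity and semicontinuity is indispensable.

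The paper supplies that input as follows. After reducing to a base $T=\Spec(A)$ with $A$ a DVR, it filters $Q=\coker(q_T)$ on $X\times T$ by the kernels $K_i$ of multiplication by powers of the local equation $s_D$ of the (reduced) support divisor, argues that the graded pieces $F_i=K_i/K_{i-1}$ are flat over $A$, and then plays upper semicontinuity of $|F_{i,t}|$ against the conservation of total length
\begin{equation*}
\sum_i|F_{i,t}|=\deg(V^*_t)-\deg(V_t)=|S|=\sum_i|S_i|,
\end{equation*}
which follows from flatness of $V_T$ and $V^*_T$, to force each $|F_{i,t}|$ to equal $|S_{i,t}|$; the fibre $Q_t$ is then reconstructed from the $F_{i,t}$. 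Whatever one's view of the final reconstruction step, this equality-forcing argument on the graded pieces of the kernel filtration is exactly the ingredient that rules out jumping, and it has no counterpart in your proposal.
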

\begin{proof} We spell out what we should check. Let $T$ be an arbitrary  reduced affine complex curve. Let $(V_T,q_T)$ be a 
 $T$-family of semi-stable quadratic vector bundles such that
 \begin{enumerate}
 \item the associated discriminant section $\{\det(q_t)\}_{t \in T}$ is constant \item and $q_T$ restricted to $T^*$ satisfies 
\begin{equation}
0 \ra V_{T^*} \stackrel{q_{T^*}}{\lra} V^*_{T^*} \ra p_X^*S \ra 0
\end{equation} 
for some sky-scraper sheaf $S$.
\end{enumerate}
 Consider the quotient $Q$ defined by $0 \ra V_T \stackrel{q_T}{\lra} V^*_T \ra Q$ on $X \times T$. Then we should check that $Q_t \simeq S$ also. Now we may assume that $T$ is the spectrum of a discrete valuation ring containing complex numbers. 

The sky-scraper sheaf $Q$ admits a filtration defined as follows. Let $D$ denote the divisor of {\it multiplicity one} which is the support of the sky-scraper sheaf $S$.  Let $s_D: \cO_X \ra \cO_X(D)$ denote the natural section (which is well-defined upto scalar multiplication). Consider now the natural section $s_{D,T}: \cO_{X \times T} \ra \cO_{X \times T}(\{D\} \times T)$. Define quotient sheaves $Q_n$ and kernel sheaves $K_n$ by tensoring 
\begin{equation} \label{fss}
0 \ra \cO_{X \times T} \stackrel{ s_{D,T}^n}{\lra} \cO_X(n \{D\} \times T) \ra \cO_{nD} \otimes_\CC B \ra 0
\end{equation} by $Q$ to get
\begin{equation} \label{knqn} 
0 \ra K_n \ra Q \stackrel{\times s^n_{D,T}}{\lra} Q \ra Q_n \ra 0. 
\end{equation} 
Since $Q$ is $B$-flat and therefore free, so $K_n$ being sub-module of $Q$ are torsion-free, thus free and thus $B$-flat too.  Consider the map $K_n  \stackrel{\times s^{n-i}_{D,T}}{\lra} K_n$. Its kernel is naturally $K_{n-i}$. Its image is torsion-free being a sub-module of $K_n$ which is free, and thus its image is $B$-free. Thus its image is projective as a $B$-module and thus the inclusions of sky-scraper sheaves $K_{n-i} \hra K_n$ are split as $B$-modules. In particular $K_n \hra Q$ is split. Thus $Q_n$ is isomorphic to $K_n$ as $B$-modules. 

Let $F_i$ denote the successive quotient for the natural inclusion $K_{i-1} \hra K_i$. Note also that $F_i$ is $B$-flat. We thus record
\begin{equation} \label{niceobs}
F_n=K_n/K_{n-1} = \ker(Q_n \ra Q_{n-1}),
\end{equation} 
which will be used to recover $Q$ from $F_n$. We see this as follows. The inclusion $$\Img(s^{i-1}_D)/\Img(s^i_D) \hra \cO_X/\Img(s^i_D)$$ as a sub-module endows $\Img(s^{i-1}_D)/\Img(s^i_D)$ with an $\cO_X$-module structure. This can be used to endow $\oplus_{i \leq n} F_i \otimes_\CC \Img(s^{i-1}_D)/\Img(s^i_D)$ with the structure of a $\cO_{X \times T}$-module. Now observe that we have $Q_n \simeq \oplus_{i \leq n} F_i \otimes_\CC \Img(s^{i-1}_D)/\Img(s^i_D)$ as $\cO_{X \times T}$-modules.

Just as the $F_i$ are defined above starting with $Q$, similarly define $S_i$ starting with the sky-scraper sheaf $p_X^* S$ via the sequence (\ref{fss}).

\begin{lem} \label{atsppoint} Over $X \times T$, the sheaf $Q$ is isomorphic to $p_X^*S$.
\end{lem}
\begin{proof} By the semi-continuity theorem applied to the sheaves $F_i$ on $X \times T$, we get 
\begin{equation} \label{scont}
|F_{i,t}| \geq |F_{i,\eta}|=|S_{i,\eta}|=|S_{i,t}|
\end{equation}
for all $i$ where $t \in T$ is the special point. However we also have
\begin{equation*}
 \sum_i |F_{i,t}| = \deg(V^*_t) - \deg(V_t)= \deg(V^*_\eta) - \deg(V_\eta)= |S|= \sum_i |S_i| 
\end{equation*}
because $V_T$ and $V_T^*$ are flat families over $X \times T \ra T$.
Thus we must necessarily have equality in (\ref{scont}). Thus the sheaves $F_i$ and $S_i$ are free $\cO_D \otimes_\CC B$ modules of the same rank. Recall here that we had chosen $D$ to have multiplicity one at each divisor.

Choose arbitrary isomorphisms between them. Now onwards we change our point of view; we shall view $F_i= \ker( Q_i \ra Q_{i-1})$ and similarly $S_i$ as sub-quotients rather than sub-objects of $Q$.  Now define $(p_X^*S)_n$ like $Q_n$ in sequence (\ref{knqn}).  {\it As sheaves of  $\cO_{X \times T}$-modules}, $Q_n$ is isomorphic to $(p_X^*S)_n$ by the following isomorphisms
\begin{equation} \label{tricky}
Q_n \simeq \oplus_{i \leq n} F_i \otimes_\CC \Img(s^{i-1}_D)/\Img(s^i_D) \simeq  \oplus_{i \leq n} S_i \otimes_\CC \Img(s^{i-1}_D)/\Img(s^i_D) \simeq (p_X^*S)_n.
\end{equation}
Taking $n$ sufficiently large, we see that $Q=Q_n \simeq (p_X^*S)_n=p_X^*S$.
\end{proof}
\end{proof}

\section{Properness in the case of polystable orthogonal bundles} \label{polystableorth}

Note that when the framed structure is trivial then we simply have quadratic bundles of degree zero. the aim of this section is to prove the following theorem
\begin{thm} The functor that to a $T$-family associates semi-stable family of polystable orthogonal bundles on $X$ is proper.
\end{thm}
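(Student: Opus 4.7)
The plan is to follow the strategy of Theorem \ref{ssred} specialized to the case of trivial framing ($S=0$), adding the essential new step of showing that the limiting quadratic form is everywhere non-degenerate on the special fiber, not merely non-zero. Let $A$ be a discrete valuation ring with uniformizer $\pi$, $T=\Spec A$, $\Sigma = X \times T$, $Y \hookrightarrow \Sigma$ the special fiber, and let $(V_{\Sigma^*}, q_{\Sigma^*})$ be a $T^*$-family of polystable orthogonal bundles.

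First, I extend the underlying bundle $V_{\Sigma^*}$ to a polystable family $V_\Sigma$ on $\Sigma$ using the classical properness of the moduli of polystable vector bundles of fixed rank on $X$. After possibly passing to a ramified cover of $T$ (which does not obstruct properness) and twisting by a line bundle pulled back from $T$, I normalize so that $(\det V_\Sigma)^{-2} \simeq \cO_\Sigma$; this is possible since $(\det V_{\Sigma^*})^{-2}$ is trivialized on $\Sigma^*$ by $\det q_{\Sigma^*}$, and any line bundle on $\Sigma$ restricting to $\cO_{\Sigma^*}$ has the form $\cO_\Sigma(aY)$ for some $a \in \ZZ$. I then extend the quadratic form as in subsection \ref{extn-qf}: diagonalize $q_{\Sigma^*}$ at base points $x_0 \in X$, pass to a ramified cover of $T$ to make the valuations of the diagonal entries even, and rescale by $\pi^{-2w_1}$ for $w_1$ the minimum such valuation. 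Semistability of $\Sym^2 V_Y^*$ rules out any pole along $Y$, yielding a regular extension $q_\Sigma$ with $q_Y := q_\Sigma|_Y$ non-zero; the degree-balance argument of Theorem \ref{ssred} gives $\deg \ker(q_Y)=0$, and polystability of $V_Y$ then forces $K_Y := \ker(q_Y)$ to be a direct summand of $V_Y$.

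The essential new step is to show $K_Y = 0$. With the normalization $(\det V_\Sigma)^{-2}\simeq \cO_\Sigma$, the discriminant $\det q_\Sigma$ is an element of $\Gamma(\Sigma,\cO)=A$ which is a unit on $\Sigma^*$. If $\det q_\Sigma \in A^{\times}$, then $\det q_Y$ is a non-vanishing constant section of $(\det V_Y)^{-2}\simeq \cO_Y$, so $q_Y$ is an isomorphism of vector bundles and we are done. If instead $\det q_\Sigma \in \pi A$, I perform a Hecke modification of $V_\Sigma$ along a saturated lift $K_\Sigma \subset V_\Sigma$ of the polystable summand $K_Y$: the enlarged lattice $\tilde V_\Sigma := V_\Sigma + \pi^{-1} K_\Sigma$ inside $V_{\Sigma^*}\otimes_A K$ carries a regular extension $\tilde q_\Sigma$ of the form because $q_\Sigma(K_\Sigma) \subset \pi V_\Sigma^{*}$ (the form vanishes on $K_Y$ modulo $\pi$), and the $\pi$-valuation of the new discriminant drops strictly. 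Iterating this modification finitely many times produces a lattice on which the discriminant becomes a unit in $A$, reducing to the first case.

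The main obstacle is the verification that each Hecke modification preserves polystability of the underlying vector bundle extension and that the normalization $(\det V_\Sigma)^{-2}\simeq \cO_\Sigma$ is maintained (or restored by a further twist and a ramified base change on $T$), so that the inductive argument terminates. This is precisely the crucial difference with the framed quadratic case of Theorem \ref{ssred}, where the limit is allowed to have non-trivial kernel controlled by the frame; here the rigidity of the orthogonal structure, encoded by $(\det V)^2 \simeq \cO_X$, provides the control needed to force non-degeneracy of the limiting form, and all of the above takes place without any appeal to Bruhat--Tits theory.
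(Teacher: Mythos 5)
Your overall strategy --- extend the underlying polystable bundle, extend the form after rescaling, then enlarge the lattice by Hecke modification until the form becomes unimodular --- is in the same spirit as the paper, but the key step is wrong as stated. The paper does not iterate modifications along $\ker(q_Y)$: it forms the full cokernel $Q=\coker(q^W_\Sigma\colon W_\Sigma\ra W_\Sigma^*)$, shows (after a ramified base change making $\pi^{2k}Q=0$) that multiplication by $\pi^k$ on $Q$ has equal image and kernel, defines the half $Q_{1/2}=\pi^kQ$, and performs a \emph{single} pull-back $0\ra W_\Sigma\ra U_\Sigma\ra Q_{1/2}\ra 0$, proving by the Ext-theoretic factorization argument of Proposition \ref{keyfactorization} (parallel to Proposition \ref{fact-gen-non-deg}) that the form extends to a symmetric isomorphism on $U_\Sigma$, and then that $U_Y$ is semi-stable and $S$-equivalent to $W_Y$.

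The genuine gap is your claim that $\tilde V_\Sigma=V_\Sigma+\pi^{-1}K_\Sigma$ carries a regular extension of the form ``because $q_\Sigma(K_\Sigma)\subset\pi V_\Sigma^*$''. From $K_Y\subset\ker(q_Y)$ you only get $q_\Sigma(k_1,k_2)\in\pi\cO_\Sigma$ for sections $k_1,k_2$ of $K_\Sigma$, whereas regularity of $\tilde q_\Sigma$ on $\pi^{-1}K_\Sigma$ requires $q_\Sigma(k_1,k_2)\in\pi^2\cO_\Sigma$. Already for the local form $\mathrm{diag}(1,\pi u)$ at the generic point of $Y$ one has $K=\cO e_2$, $q(e_2,e_2)=\pi u\notin\pi^2\cO$, and $q(\pi^{-1}e_2,\pi^{-1}e_2)=\pi^{-1}u$ is not regular; the same failure occurs with even discriminant, e.g.\ $\mathrm{diag}(1,\pi u_1,\pi u_2)$. (This also shows your induction cannot terminate when the valuation of $\det q_\Sigma$ is odd, since each legitimate modification changes that valuation by $2\,\rank(K)$; the ramified base change is needed here, not merely to normalize the determinant.) The correct modification must be along the locus where the form degenerates to order at least two --- intrinsically, along $Q_{1/2}$ rather than along all of $\ker(q_Y)$ --- which is precisely what the paper's ``half of $Q$'' construction accomplishes. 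Finally, even a corrected iteration changes the special fibre, so the statement to prove is not that $K_Y=0$ for your chosen extension $V_Y$, but that some $S$-equivalent Hecke modification of it carries an everywhere non-degenerate form; the verification that the modified special fibre is semi-stable and $S$-equivalent to the original, which you defer as ``the main obstacle,'' is an essential part of the argument (carried out in the paper via the diagram (\ref{compdiag})) and cannot be left open.
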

\begin{proof} 
The proof is a deepening of the proof of properness of quadratic bundles with frames by taking trivial framed structure and assuming that the rank $\ker(q_t^W)$ is zero for all $t \in T^*$. We see that all the arguments of last section until extension of the quadratic form $q^W_\Sigma$ go through. Here below we resume the proof from this point. Note that $(W_{\Sigma^*},q^W_{\Sigma^*})$ is a semi-stable family of orthogonal bundles.

\subsection{Definition of quotient sheaf $Q$ and elementary properties}
Consider the sequence which defines the sheaf $Q$
\begin{equation}
 W_\Sigma \stackrel{q^W_\Sigma}{\ra} W^*_\Sigma \ra Q \ra 0.
\end{equation}
The map $q^W_\Sigma$ is injective because its kernel is supported on some nilpotent scheme over $Y$ and thus is killed by some power of $\pi$ on the one hand, and is contained in $W_\Sigma$ which is a vector bundle on $\Sigma$ on the other. Hence we have 
\begin{equation}
\label{Q}
0 \ra W_\Sigma \stackrel{q^W_\Sigma}{\ra} W^*_\Sigma \ra Q \ra 0.
\end{equation}
The reduced subscheme underlying the support of $Q$ is contained in $Y$.
Now for every $m \geq 0$, let 
\begin{equation}
Y_m = X \times \Spec(A/\pi^m).
\end{equation}
Let $n$ denote the least $n$ such that support of $Q$ is contained in $Y_n$. If $n=0$, then our theorem is proved, so we shall suppose here under that $n>0$.

\begin{prop} We have a natural isomorphism $\Tor^1_\Sigma(Q,\cO_{Y_n}) \ra Q$.
\end{prop}
\begin{proof}
We have the exact sequence $0 \ra A \stackrel{\pi^n}{\ra} A \ra A/\pi^n \ra 0$ of modules on $T$. This gives the exact sequence
\begin{equation} \label{Y_n}
0 \ra \cO_\Sigma \stackrel{\pi^n}{\ra} \cO_\Sigma \ra \cO_{Y_n} \ra 0.
\end{equation}
The exactness above follows from the flatness of the projection map $X \times T \ra T$, which is flat being obtained by base change from $X \ra \Spec(\CC)$.
Tensoring (\ref{Y_n}) with $Q$, we get
\begin{equation}
\Tor^1_\Sigma(Q,\cO_\Sigma) \ra \Tor^1_\Sigma(Q,\cO_{Y_n}) \ra Q \stackrel{\pi^n}{\ra} Q \otimes_\Sigma \cO_{\Sigma} \ra Q \otimes_\Sigma \cO_{Y_n} \ra 0.
\end{equation}
Now $\Tor^1_\Sigma(Q,\cO_\Sigma)=0$ because $\cO_\Sigma$ is $\Sigma$-flat. Further $Q \otimes_\Sigma \cO_\Sigma \simeq Q$ and $Q \otimes_\Sigma \cO_{Y_n} \simeq Q$ naturally. The map $Q \stackrel{\pi^n}{\ra} Q$ is zero because $\pi^n$ acts as multiplication by zero on $Q$. Hence the natural map $\Tor^1_\Sigma(Q,\cO_{Y_n}) \ra Q $ is an isomorphism.
\end{proof}

\begin{Cor} \label{torqiso} We have a natural isomorphism $\Tor^1_\Sigma(Q,\cO_Y) \ra Q_Y$.
\end{Cor}
\begin{proof} The proof as above follows by considering the sequence $0 \ra \cO_\Sigma \stackrel{\pi}{\ra} \cO_\Sigma \ra \cO_Y \ra 0$.
\end{proof}

\begin{Cor} \label{notorsion} The sheaf of ideals $Ann(Q)$ associated to the coherent sheaf $Q$ over $ Y_n$ is generated by the image of $\pi$ in $\cO_\Sigma$. The coherent sheaf $Q_Y= Q \otimes_{Y_n} \cO_Y$ has no torsion. \end{Cor}\begin{proof} It is immediate that $Ann(Q)$ is contained in the ideal sheaf generated by $\pi$ in $\cO_\Sigma$. We check the other inclusion as follows. We tensor \ref{Q} with the $\cO_\Sigma$-module $\cO_{Y_n}$ to get \begin{equation} \cdots \ra \Tor^1_\Sigma(W^*_\Sigma,\cO_{Y_n}) \ra \Tor^1_\Sigma(Q,\cO_{Y_n}) \ra W_{Y_n} \ra W^*_{Y_n} \ra Q \ra 0.\end{equation} Now $\Tor^1_\Sigma(W^*_\Sigma,\cO_{Y_n})=0$ because $W^*_\Sigma$ being a family of vector bundles over $\Sigma$ is $\Sigma$-flat. We get \begin{equation} \label{onYn}0 \ra \Tor^1_\Sigma(Q,\cO_{Y_n}) \ra W_{Y_n} \ra W^*_{Y_n} \ra Q \ra 0.\end{equation}We have $Q \simeq \Tor^1_\Sigma(Q,\cO_{Y_n}) \hra W_{Y_n}$. Now we have the inclusions $$(\pi) \subset Ann(W_{Y_n}) \subset Ann(Q).$$

For the second statement, we take $n=1$ above to get

\begin{equation} \label{onY}
0 \ra \Tor^1_\Sigma(Q,\cO_Y) \ra W_Y \ra W^*_Y \ra Q_Y \ra 0.
\end{equation}
This shows that $Q_Y \simeq \Tor^1 \hra W_Y$ is torsion-free.
\end{proof}

\begin{prop} \label{duality} The natual morphism $\Tor^1_\Sigma(Q,\cO_Y) \ra \Hom_Y(Q_Y,\cO_Y)$ is an isomorphism.
\end{prop}
\begin{proof}
We break the sequence (\ref{onY}) into two sequences of coherent $\cO_Y$-modules
\begin{eqnarray}
 0 \ra K \ra W^*_Y \ra Q_Y \ra 0 \\
 0 \ra \Tor^1 \ra W_Y \ra K \ra 0.
\end{eqnarray}
Applying $\Hom_Y(?,\cO_Y)$ we get 
\begin{equation}
0 \ra Q^*_Y \ra W_Y \ra K^* \ra \Ext^1_Y(Q_Y,\cO_Y) \ra 0.
\end{equation}
Now $Q_Y$ being torsion-free on $Y$, which is a smooth projective curve, must be a vector bundle. Thus $\Ext^1_Y(Q_Y,\cO_Y)=0$. Similarly $K \hra W^*_Y$ is torsion-free, which on dualizing gives us the surjective map $W_Y \ra K^*$. Thus we get sequences
\begin{eqnarray}
0 \ra Q^*_Y \ra W_{Y} \ra K^* \ra 0 \\
0 \ra K^* \ra W^*_Y \ra Q^* \ra 0
\end{eqnarray}
Composing them, we see that we get the sequence 
\begin{equation} \label{compdual}
0 \ra Q^*_Y \ra W_Y \stackrel{(q^{W}_Y)^*}{\lra}  W_Y^* \ra \Tor^* \ra 0.
\end{equation}
Since the morphism $q^{W*}_Y=q^W_Y$ is symmetric, hence the sequences (\ref{onY}) and (\ref{compdual}) identify with eachother. So we get an isomorphism (say by $5$-Lemma)
\begin{equation}
\Tor^1_\Sigma(Q,\cO_Y) \ra Q^*=\Hom_Y(Q_Y,\cO_Y).
\end{equation}

\end{proof}

\begin{Cor} \label{degQ_Y0} The sheaf $Q_Y$ is a  vector bundle on $Y$ of degree zero.
\end{Cor} 
\begin{proof} Since $Q_Y$ has no torsion, hence it is a vector bundle on the curve $Y$. The degree computation follows immediately from the fact that $Q_Y$ is isomorphic to $\Tor^1_\Sigma(Q,\cO_Y)$ by Corollary \ref{torqiso} firstly and secondly $\Tor^1_\Sigma(Q,\cO_Y)$  is also its dual by Prop \ref{duality}. 

\end{proof}

\begin{prop} We have a natural isomorphism between $\Ext^1_\Sigma(Q,\cO_\Sigma) $ and $Q$.
\end{prop} 
\begin{proof} We dualize (\ref{Q}), by applying $\Hom_\Sigma(?,\cO_\Sigma)$ to it. Since $Q$ is torsion (killed by a power of $\pi$ greater than $n$), so we obtain
\begin{equation}
0= \Hom_\Sigma(Q,\cO_\Sigma) \ra \Hom_\Sigma(W^*_\Sigma, \cO_\Sigma) \stackrel{(q^W_\Sigma)^*}{\lra} \Hom_\Sigma(W_\Sigma,\cO_\Sigma) \ra \Ext^1_\Sigma(Q,\cO_\Sigma) \ra 0.
\end{equation}
Now the dual map $(q^W_\Sigma)^*=q^W_\Sigma$, and so the claimed isomorphism follows.
\end{proof} 

\subsection{Definition of "half" of $Q$}
By going to a cover of $A$ we may suppose that $n$ is even, say $n=2k$.
\begin{prop} \label{I=K} Consider the multiplication $\pi^k: Q \ra Q$. We have \begin{equation}
\Img(\pi^k)=\ker(\pi^k).
\end{equation}
\end{prop}
\begin{proof}
We have a resolution of $A/\pi^k$ as a $A/\pi^n$-module by
\begin{equation}
\cdots \ra A/\pi^n \stackrel{\pi^k}{\ra} A/\pi^n \stackrel{\pi^k}{\ra} A/\pi^n \ra A/\pi^k \ra 0.
\end{equation}
Since the projection map $X \times \Spec(A/\pi^n) = Y_n \ra \Spec(A/\pi^n)$ is flat, so tensoring with $\cO_{Y_n}$ we get a resolution
\begin{equation}
\cdots \ra \cO_{Y_n} \stackrel{\pi^k}{\ra} \cO_{Y_n} \stackrel{\pi^k}{\ra} \cO_{Y_n} \ra  \cO_{Y_k} \ra 0.
\end{equation}
This furnishes the short exact sequence
\begin{equation}
0 \ra \pi^k \cO_{Y_n} \ra \cO_{Y_n} \ra \cO_{Y_k} \ra 0.
\end{equation}
Tensoring with $Q$ over $Y_n$ we obtain
\begin{equation}
\cdots \ra \Tor^1_{Y_n}(Q,\cO_{Y_n}) \ra \Tor^1_{Y_n}(Q,\cO_{Y_k}) \ra \pi^k \cO_{Y_n} \otimes_{Y_n} Q \ra Q \ra Q \otimes \cO_{Y_k} \ra 0.
\end{equation}

\begin{lem}  The sequence \begin{equation}
0 \ra \pi^k \cO_{Y_n} \otimes_{Y_n} Q \ra Q \ra Q \otimes_{Y_n} \cO_{Y_k} \ra 0.
\end{equation} is exact.
\end{lem}
\begin{proof}
Now $ \Tor^i_{Y_n}(Q,\cO_{Y_n})=0$ for $i \geq 1$ since $\cO_{Y_n}$ is flat. This implies that we have an exact sequence \begin{equation} \label{tobeshownzero}
0 \ra \Tor^1_{Y_n}(Q,\cO_{Y_k}) \ra \pi^k \cO_{Y_n} \otimes_{Y_n} Q \ra Q \ra Q \otimes \cO_{Y_k} \ra 0.
\end{equation} 
and that the natural morphism 
\begin{equation} \Tor^{i+1}_{Y_n}(Q,\cO_{Y_n}) \ra \Tor^i_{Y_n}(Q,\cO_{Y_k})
\end{equation} is an isomorphism for $i \geq 1$. On the other hand, viewing the sequence $0 \ra W_\Sigma \ra W^*_\Sigma \ra Q \ra 0$ as a projective $\cO_\Sigma$-resolution of $Q$, we tensor it by the $\cO_\Sigma$-module $\cO_{Y_k}$ to calculate the higher $\Tor_\Sigma$
\begin{equation}
\cdots \ra 0 \ra W_\Sigma \otimes_\Sigma \cO_{Y_k} \ra W^*_\Sigma \otimes_\Sigma \cO_{Y_k} \ra Q \otimes_\Sigma \cO_{Y_k} \ra 0.
\end{equation}
This shows that $\Tor_\Sigma^i(Q, \cO_{Y_k})=0$ for $i \geq 2$. But we remark since $\cO_{Y_k} \otimes_\Sigma \cO_{Y_n} = \cO_{Y_k}$, so it follows that for all $i$ we have
\begin{equation}
\Tor_\Sigma^i(Q, \cO_{Y_k}) = \Tor_{Y_n}^i(Q,\cO_{Y_k}).
\end{equation}

This means that $\Tor_{Y_n}^i(Q,\cO_{Y_k})= \Tor_{Y_n}^{i+1}(Q,\cO_{Y_k})= \Tor_{S}^{i+1}(Q,\cO_{Y_k})=0$ also for all $i \geq 1$. In particular,  
$\Tor_{Y_n}^1(Q,\cO_{Y_k})=0$, which reduces the sequence (\ref{tobeshownzero}) to the exact sequence
\begin{equation} \label{inj}
0 \ra \pi^k \cO_{Y_n} \otimes_{Y_n} Q \ra Q \ra Q \otimes_{Y_n} \cO_{Y_k} \ra 0.
\end{equation}

\end{proof}

 We tensor
\begin{equation}
\xymatrix{
\cO_{Y_n} \ar[rr]^{\pi^k} \ar@{>>}[rd] & & \cO_{Y_n} \\
& \cO_{Y_k} \simeq  \pi^k\cO_{Y_n}  \ar@{^{(}->}[ru]
}
\end{equation}
with $Q$, and remark that the map $Q \otimes_{Y_n} \pi^k\cO_{Y_n} \ra Q$ is already shown to be injective in (\ref{inj}), to get
\begin{equation}
\xymatrix{
Q \ar[rr]^{\pi^k} \ar@{>>}[rd] & & Q \\
& Q \otimes_{Y_n} \cO_{Y_k} \simeq Q \otimes_{Y_n} \pi^k\cO_{Y_n} \ar@{^{(}->}[ru]
}.
\end{equation}
This has two consequences
\begin{enumerate}
\item 
 $\ker(\pi^k) =\ker(Q \ra Q \otimes_{Y_n} \cO_{Y_k})$. The latter by (\ref{inj}) equals $ \pi^k \cO_{Y_n} \otimes_{Y_n} Q$.
 \item $\Img(\pi^k)$ in $Q$ equals $ \pi^k \cO_{Y_n} \otimes_{Y_n} Q$ 
 \end{enumerate}
Thus $\Img(\pi^k)=\Ker(\pi^k)$.

\end{proof}

Using Prop \ref{I=K}, we can define the ``half'' of $Q$ as follows. Consider the image of $mult(\pi^k): Q \ra Q$. It is $\pi^kQ$ on the one hand. On the other, it is $Q/\ker(\pi^k)$ which by Prop \ref{I=K} is equal to $Q/\Img(\pi^k)= Q/\pi^kQ$. Thus we have an isomorphism
\begin{equation}
\pi^kQ \simeq Q/\pi^kQ.
\end{equation}

\begin{Cor} We have  $\deg(Q_{1/2Y})=0$.
\end{Cor} 
\begin{proof} This simply follows from the isomorphism $Q_Y \ra Q_{1/2Y}$ and the degree of $Q_Y$.
\end{proof}
\begin{Cor} \label{connhom} The connecting homomorphism $\Tor^1_\Sigma(Q_{1/2},\cO_Y) \ra Q_{1/2Y}$ is an isomorphism.
\end{Cor}
\begin{proof} We tensor $0 \ra Q_{1/2} \ra Q \ra Q_{1/2} \ra 0$ by $\cO_Y$ to get 
\begin{equation}
\Tor^1_\Sigma(Q,\cO_Y) \ra \Tor^1_\Sigma(Q_{1/2},\cO_Y) \ra Q_{1/2Y} \ra Q_Y \ra Q_{1/2Y} \ra 0.
\end{equation}
Now the map $Q_Y \ra Q_{1/2Y}$ is an isomorphism. So it suffices to show that the morphism $\Tor^1_\Sigma(Q,\cO_Y) \ra \Tor^1_\Sigma(Q_{1/2},\cO_Y)$ is zero. This map is obtained by $mult(\pi^k): Q \ra Q$, which induces $mult(\pi^k): \Tor^1_\Sigma(Q,\cO_Y) \ra \Tor^1_\Sigma(Q,\cO_Y)$. Now the $A$-module structure on $\Tor^1_\Sigma$ can be obtained also by multiplication on the second factor $\cO_Y$ on which $\pi^k$ acts as zero.

\end{proof}
\begin{defi} We define ``half'' $Q_{1/2}$ of $Q$ as $\pi^kQ \simeq Q/\pi^kQ \hra Q$.
\end{defi} 
Sometimes we may suggestively write
\begin{equation}
0 \ra Q_{1/2} \ra Q \ra Q^{1/2} \ra 0.
\end{equation}

The following proposition is an ingredient in main Prop \ref{keyfactorization} used to check naturality.
\begin{prop} \label{halfext} The sub-module $\Ext^1_\Sigma(Q^{1/2},\cO_\Sigma) \ra \Ext^1_\Sigma(Q,\cO_\Sigma)$ is naturally the half of the latter. Also under the natural isomorphism $\Ext^1_\Sigma(Q,\cO_\Sigma) \simeq Q$ it identifies with $Q_{1/2}$.
\end{prop}
\begin{proof}
Dualizing the diagram 
\begin{equation}
\xymatrix{
Q \ar[r]^{\pi^k} \ar@{>>}[d] &              Q \ar[r] & Q^{1/2} \ar[r] & 0 \\
                   Q^{1/2} \simeq Q_{1/2} \ar@{^{(}->}[ru] & &&
}
\end{equation}
by applying $\Hom_\Sigma(?,\cO_\Sigma)$  we obtain
\begin{equation*}
\xymatrix{
 \Ext_\Sigma^1(Q^{1/2},\cO_\Sigma) \ar@{^{(}->}[r] &  \Ext^1_\Sigma(Q,\cO_\Sigma) \ar[r]^{\pi^k} \ar@{>>}[d] &             \Ext^1_\Sigma(Q,\cO_\Sigma) \\
 & \Ext^1_\Sigma( Q_{1/2},\cO_\Sigma) \simeq \Ext^1_\Sigma(Q^{1/2},\cO_\Sigma) \ar@{^{(}->}[ru]  &
}
\end{equation*}

We make two remarks here. Firstly, note  that dualizing $mult(\pi^k): Q \ra Q$ we  again get multiplication by $\pi^k$ as the induced endomorphism map on $ \Ext^1_\Sigma(Q,\cO_\Sigma)$  because the $A$-module structure on $\Ext$ can be defined by the first (or equivalenty the second) factor. Secondly, note that $\Ext^2_\Sigma(Q,\cO_\Sigma)=0$ because by the sequence (\ref{Q}) provides a projective resolution of $Q$ of length two. Thus the downward vertical arrow is surjective.

Thus $\Ext^1_\Sigma(Q^{1/2},\cO_Y)$ being naturally the image and kernel of $mult(\pi^k)$ identifies with the half of $\Ext^1_\Sigma(Q,\cO_Y)$ and under the isomorphism $\Ext^1_\Sigma(Q,\cO_Y)$ it identifies with $Q_{1/2}$.
\end{proof}

Summarizing we have
\begin{equation} \label{Q/2seq}
\xymatrix{
0 \ar[r] & Q_{1/2} \ar[r] \ar[d] & Q \ar[r] \ar[d] & Q^{1/2} \ar[r] \ar[d] & 0 \\
0 \ar[r] & \Ext^1_\Sigma(Q^{1/2},\cO_Y) \ar[r] & \Ext^1_\Sigma(Q,\cO_Y) \ar[r] & \Ext^1_\Sigma(Q_{1/2},\cO_Y) \ar[r] & 0
}
\end{equation}

\subsection{Key Hecke-modification}
By taking pull-out of short exact sequence $0 \ra W_\Sigma \stackrel{q^W_\Sigma}{\ra} W^*_\Sigma \ra Q \ra 0$ by $Q_{1/2} \hra Q$, we obtain
\begin{equation} \label{extW1}
0 \ra W_\Sigma \ra U_\Sigma \ra Q_{1/2} \ra 0.
\end{equation}

A reader will observe that the proof of the following proposition is very similar to that of Proposition \ref{fact-gen-non-deg}.
\begin{prop} \label{keyfactorization} We have a factorization
\begin{equation} 
\xymatrix{
W_\Sigma \ar[r]^{q^W_\Sigma} \ar[d] & W^*_\Sigma \\
U_\Sigma \ar@{.>}[r]^{q^U_\Sigma} & U_\Sigma^* \ar[u]
}
\end{equation}
where the map $q^U_\Sigma : U_\Sigma \ra U^*_\Sigma$ is a symmetric isomorphism.
\end{prop}
\begin{proof} Dualizing of the extension class (\ref{extW1}), we get  
\begin{equation} \label{dualextW1}
0 \ra U_\Sigma^* \ra W_\Sigma^*\ra \Ext_\Sigma^1(Q_{1/2}, \cO_Y) \ra  0
\end{equation}
We shall prove that the composite of $W_\Sigma \stackrel{q^W_\Sigma}{\lra} W_\Sigma^*$ with $W_\Sigma^* \ra \Ext_\Sigma^1(Q_{1/2}, \cO_\Sigma)$ is zero, so by (\ref{dualextW1}), we would have a factorization 
\begin{equation} \label{start2}
\xymatrix{
       & W_\Sigma \ar[d]^{q^W_\Sigma} \ar@{.>}[ld]^{q_1} & \\
U_\Sigma^* \ar[r] & W_\Sigma^* \ar[r] & \Ext_\Sigma^1(Q_{1/2}, \cO_\Sigma)
}
\end{equation} 
Then we will show that composing  $q_1^*: U_\Sigma \ra W_\Sigma^*$ with $W_\Sigma^* \ra \Ext_\Sigma^1(Q_{1/2}, \cO_\Sigma)$ is zero. This furnishes the desired factorization
\begin{equation*}
\xymatrix{
      & U_\Sigma \ar[d]^{q_1^*} \ar@{.>}[ld]_{q^U_\Sigma} & \\
U_\Sigma^* \ar[r] & W_\Sigma^* \ar[r] & \Ext_\Sigma^1(Q_{1/2}, \cO_\Sigma).
}
\end{equation*}
Let us remark that $W_\Sigma^* \ra \Ext_\Sigma^1(Q_{1/2}, \cO_\Sigma)$ corresponds to evaluating the sequence (\ref{extW1}).

Firstly, since the extension (\ref{Q}) $\in \Ext^1_\Sigma(Q,W_\Sigma)$ arises from  $\Id \in \Hom_\Sigma(Q, Q)$ under the connecting homomorphism, so in $\Ext_\Sigma^1(Q, W_\Sigma^*)$ its image is zero. This corresponds to taking the push-out of (\ref{Q}) by $q^W_\Sigma: W_\Sigma \ra W_\Sigma^*$. It follows now by the commuting squares,
 \begin{equation*}
\xymatrix{
\ar[r] &  \Hom_\Sigma(Q, Q) \ar[r] \ar[d] & \Ext_\Sigma^1(Q, W_\Sigma) \ar[r] \ar[d] & \Ext_\Sigma^1(Q, W_\Sigma^*) \ar[d] \\
\ar[r] &  \Hom_\Sigma(Q_{1/2}, Q) \ar[r]      & \Ext_\Sigma^1(Q_{1/2}, W_\Sigma) \ar[r]      & \Ext_\Sigma^1(Q_{1/2}, W_\Sigma^*) 
}
\end{equation*}
the  push-out of (\ref{extW1}) by $q^W_\Sigma: W_\Sigma \ra W_\Sigma^*$ becomes the trivial extension
\begin{equation} \label{text2}
\xymatrix{
0 \ar[r] & W_\Sigma \ar[r] \ar[d]^{p^*q} & U_\Sigma \ar[r] \ar@{.>}[d] & Q_{1/2} \ar[r] \ar@{.>}[d] & 0 \\
0 \ar[r] & W_\Sigma^* \ar[r]                          & W_\Sigma^* \oplus Q_{1/2} \ar[r] & Q_{1/2} \ar[r] & 0.
}
\end{equation}
The composite of $W_\Sigma \stackrel{q^W_\Sigma}{\ra} W_\Sigma^* \ra \Ext_\Sigma^1(Q_{1/2}, \cO_\Sigma)$, corresponds to evaluating by  $w \in W_\Sigma$ to get push out of  the bottom row of (\ref{text2}). Taking the push-out of (\ref{text2}) which is the trivial extension,
\begin{equation}
\xymatrix{
0 \ar[r] & W_\Sigma \ar[r]  \ar@{->}[d]^{\ev(w)}            & W_\Sigma \oplus Q_{1/2} \ar[r] \ar@{.>}[d]^{\ev(w)} & Q_{1/2} \ar[r] \ar@{.>}[d]& 0 \\
0 \ar[r] & \cO_Y \ar[r] & \cO_Y \oplus Q_{1/2} \ar[r] & Q_{1/2} \ar[r] & 0 
}
\end{equation}
we again get trivial extensions. This means, in other words, that the composite is zero. This furnishes the map $q_1 : W_\Sigma \ra U_\Sigma^*$.  

By the sequence (\ref{dualextW1}) and (\ref{Q}), we have a factorization in equation (\ref{start2})
\begin{equation} 
\xymatrix{
       & W_\Sigma \ar[d]^{q^W_\Sigma} \ar[ld]^{q_1} & \\
U_\Sigma^* \ar[r] & W_\Sigma^* \ar[r] \ar[d] & \Ext_\Sigma^1(Q_{1/2}, \cO_\Sigma) \\
            & Q \ar@{.>}[ru] &
}
\end{equation} 

where $Q \ra \Ext_\Sigma^1(Q_{1/2}, \cO_\Sigma)$ is a natural surjective map. By Prop \ref{halfext}, we have $\Ext_\Sigma^1(Q_{1/2}, \cO_\Sigma) \simeq Q_{1/2}$ and thus  $\ker( Q \ra \Ext_\Sigma^1(Q_{1/2}, \cO_\Sigma)) $ identifies with $Q_{1/2}$ by sequence (\ref{Q/2seq}). We thus get
$$0 \ra W_\Sigma \stackrel{q_1}{\ra} U_\Sigma^* \ra Q_{1/2} \ra 0$$
by the diagram (\ref{start2}).
Taking duals, we obtain
$$0 \ra U_\Sigma \stackrel{q_1^*}{\ra}  W_\Sigma^* \ra \Ext_\Sigma^1(Q_{1/2}, \cO_\Sigma) \ra 0.$$
Thus the composite of $q_1^*$ with $W_\Sigma^* \ra \Ext_\Sigma^1(Q_{1/2}, \cO_\Sigma)$ is zero. So we obtain a map $q^U_\Sigma: U_\Sigma \ra U_\Sigma^*$ factoring $q^W_\Sigma: W_\Sigma \ra W_\Sigma^*$. 

By the sequence (\ref{extW1}), and because $Q_{1/2}$ is the "half" of $Q$, it follows that this factoring is an isomorphism.
\end{proof}

\begin{prop} The family $U_\Sigma$ is a semi-stable family of orthogonal bundles.
\end{prop} 
\begin{proof} Recall that the extension (\ref{extW1}) was obtained from pulling out by $Q_{1/2} \hra Q$ as follows
\begin{equation}
\xymatrix{
0 \ar[r] & W_\Sigma \ar[r] & W^*_\Sigma \ar[r] & Q \ar[r] & 0 \\
0 \ar[r] & W_\Sigma \ar[r] \ar[u] & U_\Sigma \ar[r] \ar[u] & Q_{1/2} \ar[r] \ar[u] & 0 
}.
\end{equation}
We remark that the induced endomorphism of  $W_\Sigma$ is identity.
Tensoring with the $\cO_\Sigma$-module $\cO_Y$ we get
\begin{equation} \label{compdiag}
\xymatrix{
& 0 \ar[r] & Q^{1/2}_Y \ar@{>>}[r]  & Q^{1/2}_Y    \\
 \Tor_1^\Sigma(Q,\cO_Y) \ar@{{(}^->}[r] & W_Y \ar[r] \ar[u] & W_Y^* \ar@{>>}[u] \ar@{>>}[r] & Q_Y  \ar@{>>}[u]  \\
 \Tor_1^\Sigma(Q,\cO_Y) \ar@{{(}^->}[r] \ar[u]^{\Id} & W_Y \ar[r] \ar[u] & U_Y \ar[u] \ar@{>>}[r] & Q_{1/2Y}  \ar@{.>}[u]^{zero} \\
&0 \ar[r] \ar[u] & \Tor^{\Sigma}_1(Q^{1/2},\cO_Y) \ar@{{(}^->}[u] \ar@{>>}[r]^{\Id} & \Tor^{\Sigma}_1(Q^{1/2},\cO_Y) \ar@{{(}^->}[u]^{\simeq}  
}
\end{equation}
where the map $Q_{1/2Y} \ra Q_Y$ is seen to be zero. It follows that the vertical map $\Tor^\Sigma_1(Q^{1/2},\cO_Y) \ra Q_{1/2Y}$ is an isomorphism.

From the above diagram it follows that the sequence $U_Y \ra Q_{1/2Y}$ splits. Now $Q_Y$ is semi-stable of degree zero. This follows from \ref{Q}, since $W^*_Y$ is semi-stable of degree zero and by Prop \ref{degQ_Y0}, the degree of $Q_Y$ is zero. Similarly $\ker(U_Y \ra Q_{1/2Y}) = W_Y/\Tor^\Sigma_1$ is semi-stable of degree zero being quotient of $W_Y$ and since $\deg(\Tor^\Sigma_1)$ is also zero being isomorphic to $Q_Y$ by Corollary \ref{torqiso}. Thus $U_Y$ is semi-stable of degree zero. That $U_Y$ is $S$-equivalent to $W_Y$ follows from \ref{compdiag}.

\end{proof}

The $S$-equivalence class of the family $(U_\Sigma ,q^U_\Sigma)$ is the sought equivalence class. To see this, note that our construction depends on the limit $W_Y$ we chose of $W_{\Sigma^*}$. Had we had chosen another bundle $W'_Y$ which is only $S$-equivalent to it, we would have got a new orthogonal bundle $U'_Y$ which is $S$-equivalent to $W'_Y$. But since $W'_Y$ and $W_Y$ are $S$-equivalent by assumption, hence it follows that so are $U'_Y$ and $U_Y$ as vector bundles. In particular, if the limit $W_Y$ is polystable then the new orthogonal bundle is also polystable as a vector bundle. Now one checks that polystable vector bundle together with orthogonal structure are polystable as orthogonal bundles too. 

\end{proof}

\begin{rem}  \label{diff} After the diagram \ref{compdiag}, we mentioned that $U_Y \ra Q_Y$ is split. But the sequence $\Tor_1^\Sigma(Q,\cO_Y) \hra W_Y$ may not split. So when $\Shalf$ is non-trivial, it does not seem possible to transport the framed structure from $W_Y$ to $U_Y$. On the other hand,  the quadratic form $q^W_\Sigma$ becomes non-degenerate only on $U_\Sigma$. So for framed quadratic modules, it seems inevitable to obtain everywhere degenerate quadratic bundles (with frames) in the limit at infinity.
\end{rem}

\section{Comparisons with \cite{gs} and \cite{lgp}}

In this section, we compare our (semi)-stability conditions with that of \cite[Gomez-Sols]{gs} and \cite{lgp} for $(V,q)$ where $q$ is generically non-degenerate.


Note our definition of semi-stability for quadratic framed modules $(f:W \ra \Shalf,q')$, for their invariant direct image $(V,q)$ simply mean that the underlying vector bundle $V$ is (semi)-stable.

Let us recall the definition of (semi)-stability of conic bundles. 

\begin{defi} Let $(\cE,Q)$ be a quadric bundle. We say that two sub-bundles $\cE_1 \subset   
\cE_2 \subset \cE$ give a critical filtration of $(\cE,Q)$ if $\rank(\cE_1)=1$, $\rank(\cE_2)=2$, $Q|_{\cE_1 \cE_2}=0$, and $Q|_{\cE_1 \cE} \neq 0 \neq Q|_{\cE_2 \cE}$.
\end{defi}

\begin{defi} Let $\tau$ be a positive rational number and $(E,Q)$ be a conic bundle. We say that $(E,Q)$ is (semi)-stable with respect to $\tau$ if the following conditions hold
\begin{enumerate}
\item (ss.1) If $E'$ is a proper sub-bundle of $E$, then
$$\frac{\deg(E') - c_Q(E') \tau}{\rank(E')} \leq \frac{\deg(E) - 2 \tau}{3}$$
where $c_Q(E')$ equals $2$ if $Q(E',E') \neq 0$, else $1$ if $Q(E',E) \neq 0$ and $0$ otherwise,
\item (ss.2) If $E_1 \subset E_2 \subset E$ is a critical filtration, then 
$\deg(E_1) + \deg(E_2) \leq \deg(E).$
\end{enumerate}

\end{defi}

\begin{defi} \cite[Definition 16,page 10]{lgp} The above definition is generalized to any rank by retaining (ss1) and replacing the second condition with the following.
For every critical filtraion $0 \subset E_i \subset E_j \subset E$ we have
$$ (r_i + r_j)d - r (d_i + d_j) - 2 \tau (r_i + r_j -r) (\geq) 0.$$
\end{defi}
\begin{prop} For $\delta =1$ and $(V,q)$ where $q$ is generically non-degenerate, the $\delta$-(semi)-stability condition implies [GS] semi-stability condition for all $\tau > 0$. \end{prop}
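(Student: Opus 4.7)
The plan is to reduce $\delta = 1$-semi-stability of the $\Gamma$-framed quadratic module to $\mu$-semi-stability of the underlying quadratic bundle $(V, q)$, and then to verify the Gomez--Sols conditions by direct calculation. By Proposition \ref{kerstable}, the $\delta = 1$-(semi)-stability of the framed module $f : W \ra \Shalf$ is equivalent to semi-stability of the vector bundle $\ker(f)$; and under the equivalence of Theorem \ref{eqcat}, $\ker(f)$ identifies with $p^{*} V$. Since $p : Y \ra X$ is a $\ZZ/2$-cover in characteristic zero, $\ZZ/2$-descent shows that $p^{*} V$ is $\ZZ/2$-semi-stable if and only if $V$ is $\mu$-semi-stable on $X$. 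So it suffices to show that $\mu$-semi-stability of $V$, together with generic non-degeneracy of $q$, implies the Gomez--Sols conditions (ss.1) and (ss.2) for every $\tau > 0$.

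For (ss.1), let $E' \subset V$ be a proper sub-bundle of rank $r'$. The key observation is that the injectivity of $q : V \ra V^{*}$ (which follows from $V^{*}/q(V)$ being torsion) rules out $c_Q(E') = 0$: the latter would mean $q|_{E'} : E' \ra V^{*}$ is zero, forcing $E' = 0$. Thus $c_Q(E') \in \{1, 2\}$. The (ss.1) inequality is equivalent, after clearing denominators, to
\[
\rank(V) \deg(E') - r' \deg(V) \;\leq\; \tau \bigl( c_Q(E') \rank(V) - 2 r' \bigr).
\]
The left-hand side is $\leq 0$ by $\mu$-semi-stability of $V$. For the right-hand side: if $c_Q(E') = 2$, it equals $2(\rank(V) - r') \geq 0$; if $c_Q(E') = 1$, then $E' \subset E'^{\perp}$, and since $q$ is generically non-degenerate we have $\rank(E'^{\perp}) = \rank(V) - r'$, whence $2 r' \leq \rank(V)$ and the right-hand side equals $\tau(\rank(V) - 2 r') \geq 0$. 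In either case the inequality holds.

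For the generalized (ss.2) of \cite{lgp}, given a critical filtration $0 \subset E_i \subset E_j \subset V$ the vanishing $q|_{E_i \cdot E_j} = 0$ yields $E_i \subset E_j^{\perp}$; combined with $\rank(E_j^{\perp}) = \rank(V) - r_j$, this forces $r_i + r_j \leq \rank(V)$. Consequently $-2 \tau (r_i + r_j - \rank(V)) \geq 0$. Moreover, $\mu$-semi-stability of $V$ applied separately to $E_i$ and $E_j$ gives $\rank(V) d_i \leq r_i \deg(V)$ and $\rank(V) d_j \leq r_j \deg(V)$; adding yields $(r_i + r_j) \deg(V) - \rank(V)(d_i + d_j) \geq 0$. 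Summing with the previous inequality produces the (ss.2) condition.

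The main obstacle is not any deep ingredient but rather the careful tracking of signs in the $\tau$-term: it is precisely the isotropy of $E'$ (respectively, of $E_i$ against $E_j$) that delivers the rank bound needed for the $\tau$-correction to have the correct sign, and this interplay with generic non-degeneracy is the only place where the quadratic structure enters beyond $\mu$-semi-stability of $V$.
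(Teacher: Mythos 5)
Your proof is correct and follows essentially the same route as the paper: reduce $\delta=1$-semi-stability to $\mu$-semi-stability of $V$, use that to control the degree term, and use generic non-degeneracy (via the rank of $E'^{\perp}$, resp.\ $E_j^{\perp}$) to force the isotropy rank bounds that make the $\tau$-term nonnegative. Your treatment is in fact slightly more complete than the paper's, since you explicitly rule out $c_Q(E')=0$ via injectivity of $q$ and verify the generalized higher-rank form of (ss.2) with its $\tau$-correction, whereas the paper only writes out the rank-three conic-bundle case.
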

\begin{proof}
Let us check ss1. Let $V'$ be a sub-bundle of $V$. Now $$\frac{\deg(V) - 2 \tau}{3} - \frac{\deg(V') - c_q(V') \tau}{\rank(V')} = \frac{\mu(V) - \mu(V')}{2} + \tau (\frac{c_q(V')}{\rank(V')} - 2/3).$$ Now for $(c_q(V')=1, \rank(V')=2)$ and $(c_q(V')=0, \rank(V')=1,2)$, the bundle $V$ would not be non-degenerate. In the remaining cases, the expression is positive.

Let us check ss2. Note that for conic bundles, i.e when $\rank(E)=3$, if $0 \subset E_1 \subset E_2 \subset E$ forms a critical filtration, then $E_2 = E_1^\perp$.
 Then by our semi-stability conditions, we have 
\begin{eqnarray}
\frac{\deg(V') }{1} \leq \frac{\deg(V) }{3} \\
\frac{\deg(V'^\perp) }{2} \leq \frac{\deg(V) }{3}.
\end{eqnarray}

These imply the condition ss2. 
\end{proof}

\begin{prop} For $\tau < 1/16$, stability in [GS] implies $\delta$-(semi)-stability condition when $q$ is generically non-degenerate.
\end{prop}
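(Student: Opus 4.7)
The plan is to reduce the question to showing that the underlying rank three bundle $V$ is semi-stable as a vector bundle, since (as noted at the start of this section) this is precisely what our $\delta=1$ semi-stability amounts to for the invariant direct image $(V,q)$. So I would fix a non-zero proper sub-bundle $V' \subset V$ and aim to prove $\mu(V') \leq \mu(V)$ from the GS (ss1) inequality.

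The first step is a conceptual observation: the generic non-degeneracy of $q$ rules out $c_q(V') = 0$ entirely. Indeed, $c_q(V') = 0$ would place $V' \subset \ker(q : V \to V^*)$, but this kernel vanishes at the generic point of $X$, so a sub-bundle contained in it is zero. Hence only the four cases $(\rank(V'), c_q(V')) \in \{(1,1),(1,2),(2,1),(2,2)\}$ need to be examined.

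Next, I would rewrite (ss1), with $\rank(V) = 3$, in slope form as
\[
\mu(V') - \mu(V) \;<\; \tau \!\left( \frac{c_q(V')}{\rank(V')} - \frac{2}{3} \right),
\]
and inspect the four cases. The right hand side is maximized in case $(1,2)$, where it equals $4\tau/3$; in the other cases it is $\tau/3$, $-\tau/6$, or $\tau/3$. By integrality, any positive value of $\mu(V') - \mu(V)$ must be a positive multiple of $1/(3\rank(V'))$, hence at least $1/3$ if $\rank(V')=1$ and at least $1/6$ if $\rank(V')=2$. A direct check shows that for $\tau < 1/16$ the upper bound in each case is strictly smaller than the corresponding integrality lower bound, so $\mu(V') - \mu(V)$ cannot be positive. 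Since $V'$ was arbitrary, $V$ is semi-stable.

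The only genuinely conceptual step is ruling out $c_q(V') = 0$ using the generic non-degeneracy of $q$; the rest is the four-case numerical check sketched above. Note that $1/16$ is generous --- the binding constraint is the inequality $4\tau/3 < 1/3$ in case $(1,2)$, so already $\tau < 1/4$ would suffice --- but $1/16$ gives comfortable margin uniformly and is the value stated.
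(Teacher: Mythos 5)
Your proof is correct and follows essentially the same route as the paper: rule out $c_q(V')=0$ via generic non-degeneracy, rewrite (ss1) in slope form, and use integrality of $\deg/\rank$ to upgrade the strict inequality with a small $\tau$-dependent error into $\mu(V')\le\mu(V)$. The paper compresses this into a one-line bound ``$\mu(W)-\mu(W')+\epsilon>0$ with $\epsilon<1/6$'' plus the LCM-of-ranks observation; your four-case check merely makes that explicit (and correctly notes $\tau<1/4$ already suffices).
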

\begin{proof}
Let $W'$ be a sub-bundle of $W$. Then if $\tau < 1/16$, then [GS]-stability implies that $\mu(W) - \mu(W') + \epsilon > 0 $ where $\epsilon < 1/6$. This means that $\mu(W) - \mu(W') \geq 0$ because the least common multiple of the possible ranks from $1$ to $3$ is $6$.
\end{proof}

\begin{rem} For the sake of completeness we justify that the parameter $\tau$ in Gomez-Sols \cite{gs}, can be taken to be as small or as large as we like. It is defined in Prop 2.10 of \cite[ GS]{gs} as 
$$\frac{n_2}{n_1} = \frac{P(l) - P(m)}{P(m) - 2 \tau} \tau$$
where $n_1$ and $n_2$ are the multiples of the principal polarizations on $\Quot(H \otimes \cO_X(-m), P)$ and $\Proj(\Sym^2(H^* \otimes H^0(X, \cO_X(l-m))))$, $P$ is the Hilbert polynomial of the underlying vector bundles ($P(n)= 3n + d + 3(1-g)$) and $H$ is a vector space of dimension $P(m)$. For fixed $m, l$ and $n_2$ as $n_1$ tends to $\infty$, $\tau$ tends to $0$. So we can make it arbitrarily small. Keeping the difference $n-m$, $n_1$ and $n_2$ fixed, by letting $m$ tend to $\infty$, we can get $\tau$ arbitrarily large because $\tau= 2n_2 \frac{ 3m + d + 3(1-g)}{3 n_1 ((n-m) + 2n_2)}$.
\end{rem}

We use the following facts to check (ss1) which is always satisfied when $q$ is generically non-degenerate. Note here that since $q$ is generically non-degenerate, so for any sub-bundle $V'$, $c_q(V') \neq 0$. Also if $\rank(V') > \rank(V)/2$, then $c_q(V') \geq 1$. It is for the second condition that one needs to take $\tau$ small exploiting the fact that since the degree and the rank are integers, thus for stable bundles, the supremum of slopes is still bounded above.

In conclusion, with the methods of the last section, we may check that for a generically non-degenerate quadratic bundle $(V,q)$ which is $\delta=1$ stable as per our definition is $\tau$-stable in the sense of \cite{lgp} for small values of $\tau$.

\section*{Acknowledgements}
This work started during my post-doc at Chennai Mathematical Institute. Prof. C.S.Seshadri suggested  this problem and encouraged. Prof. V.Balaji constantly supported and generously explained the non-GIT approach of \cite{bs}. I thank them for the confidence they have placed in me. My thanks also go to the Institute of Mathematical Sciences, Chennai for providing local hospitality.

\bibliographystyle{amsplain}
\bibliography{qbun}
\end{document}